\documentclass[11pt,letterpaper]{amsart}

\usepackage{latexsym} 
\usepackage{float}
\usepackage{framed}
\usepackage{textcomp}
\usepackage{amstext}
\usepackage{multicol} 
\usepackage{longtable} 
\usepackage{lscape} 
\usepackage{multirow} 
\usepackage{array} 
\usepackage{amssymb}
\usepackage{amsmath} 
\usepackage{amsthm}
\usepackage[T1]{fontenc} 
\usepackage{graphicx,tikz}
\usepackage{verbatim}
\usepackage[colorlinks=true,linkcolor=blue,citecolor=black,urlcolor=cyan]{hyperref}

\numberwithin{equation}{section}

\newtheorem{theorem}{Theorem}[section]
\newtheorem{proposition}[theorem]{Proposition}
\newtheorem{lemma}[theorem]{Lemma}
\newtheorem{corollary}[theorem]{Corollary}

\theoremstyle{remark}
\newtheorem{remark}[theorem]{\bf Remark}
\theoremstyle{plain}
\newtheorem{prop}[theorem]{Proposition}

\newcommand{\R}{\mathbb R}

\newcommand{\dd}{\,d}
\newcommand{\eps}{\varepsilon}

\newcommand{\ol}{\overline}

\title{ \bf Regularity for convex viscosity solutions of $\sigma_3$ equation}
\thanks{This work was supported by NSFC (No. 12141103 and No. 12071017)}

\author[R. Chen]{Ruosi Chen}
\address{Department of Mathematical Sciences, Tsinghua University, Beijing 100084, China}
\email{crs22@mails.tsinghua.edu.cn}

\author{YanNan Liu}
\address{School of Mathematics and Statistics, Beijing Technology and Business University, Beijing 100048, China}
\email{liuyn@th.btbu.edu.cn}

\author[X. Zhou]{Xingchen Zhou}
\address{School of Mathematics and Statistics, Hainan University, Haikou 570228, China}
\email{zxc3zxc4zxc5@stu.xjtu.edu.cn}

\begin{document}

\begin{abstract}
We prove interior $C^2$ regularity for convex viscosity solutions of the $3$-Hessian equation $\sigma_3(D^2u)=f(x)$ with $f\in C^{0,1}, \inf f>0$, under a strict $3$-convexity condition on $u$.
\end{abstract}

\maketitle

\section{Introduction}

In this note we study interior regularity for convex viscosity solutions of
\begin{equation}\label{eq:main-equation}
        \sigma_k(D^2u)=f(x)>0
\end{equation}
focusing on the case $k=3$. Convexity alone does not rule out Pogorelov-type counterexamples. In fact, in a small neighborhood of the origin, Pogorelov's example
\[
        U(x_1,\ldots,x_k)=(1+x_1^2)(x_2^2+\cdots+x_k^2)^{1-1/k}
\]
gives a convex viscosity solution of a $k$-dimensional Monge--Amp\`ere equation with positive right-hand side; see Pogorelov \cite{Pogorelov}.  Its trivial extension to $\mathbb R^n$ gives a convex viscosity solution of the corresponding $k$-Hessian equation in a neighborhood of the origin in $\mathbb R^n$, for $3\le k\le n$. See also Urbas \cite{Urbas1990}. The contact set of this example contains
\[
    \{x_2=\cdots=x_k=0\}\times \mathbb R^{n-k},
\]
which has dimension $n-k+1$.  
Following Mooney \cite{MR4246798}, we impose the following strict
$3$-convexity condition: for every supporting affine function $L$ of $u$, we require that
\begin{equation}
     \dim \{x:u(x)=L(x)\}\le n-3.
    \tag{C}
    \label{eq:strict-k-convex}
\end{equation}

The main result is the following.

\begin{theorem}
\label{thm:main}
Let $u\in C(B_2)$ be a convex viscosity solution of
\[
        \sigma_3(D^2u)=f(x)\qquad\text{in }B_2,
\]
where $f\in C^{0,1}(B_2)$ and $f \ge f_0>0$.  Assume that $u$ satisfies the strict $3$-convexity condition \eqref{eq:strict-k-convex}. Then $u\in C^2(B_2)$.
\end{theorem}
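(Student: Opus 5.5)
The plan is a localization and approximation argument: use condition \eqref{eq:strict-k-convex} to build compactly contained sections on which $u$ can be approximated by smooth solutions, and prove a Pogorelov-type interior $C^{1,1}$ estimate for those approximants. Then upgrade to $C^2$ using Evans--Krylov-type theory, which applies because convex $u$ has $D^2u$ in the convex cone where $\sigma_3^{1/3}$ is concave.

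\textbf{Step 1 (sections).} Fix $x_0\in B_2$ and let $L$ be a supporting affine function at $x_0$. Following Mooney \cite{MR4246798}, condition \eqref{eq:strict-k-convex} says the contact set $\{u=L\}$ has dimension at most $n-3$. Since $u$ is a viscosity solution with $\sigma_3\ge f_0>0$, an Alexandrov-type measure argument should rule out contact sets that are convex, have dimension at most $n-3$, and reach $\partial B_2$; Pogorelov's example shows exactly dimension $n-2$ is the borderline. From this I would conclude that $u$ is strictly convex in the $\sigma_3$ sense: for small $h>0$, the section
\[
S_h=\{u<L+h\}
\]
(after a small tilt of $L$) is compactly contained in $B_2$. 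This is the step I expect to be the \textbf{main obstacle}. It requires a Caffarelli-type alternative: a contact set, if nontrivial, must extend to the boundary and have dimension greater than $n-3$. I would first try to adapt Mooney's argument for $\sigma_k$ by comparing with the barriers $|x'|^{2-2/k}$ on slabs, using $k=3$ in the homogeneity count.

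\textbf{Step 2 (approximation).} On $S_h$, solve the Dirichlet problem $\sigma_3(D^2u_\eps)=f_\eps$ with $u_\eps=L+h$ on $\partial S_h$, where $f_\eps$ is a mollification of $f$ and the domain is smoothed and made uniformly convex. This is possible by Caffarelli--Nirenberg--Spruck. By uniqueness of viscosity solutions we get $u_\eps\to u$ locally uniformly.

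\textbf{Step 3 (Pogorelov estimate for $k=3$).} Prove
\[
(L+h-u_\eps)^{\beta}\,\Delta u_\eps\le C
\]
with $C$ depending only on $h$, $f_0$ and $\|f\|_{C^{0,1}}$. The only derivative of $f$ entering is the first, so Lipschitz $f$ suffices. The test function is $\log\lambda_{\max}+\beta\log(L+h-u)+\frac a2|Du|^2$. The special structure of $\sigma_3$ is used here: for $k=3$ and convex $D^2u$ there are concavity inequalities (of the kind in Chu--Jiao / Shankar--Yuan) that control the third-derivative terms with the weaker power $\beta$ allowed. Convexity of the approximants may need to be preserved, or one uses $\sigma_2$-positivity instead.

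\textbf{Step 4 (conclusion).} We obtain $C^{1,1}$ bounds on the section $S_{h/2}$, uniform in $\eps$. The equation is then uniformly elliptic with concave operator $\sigma_3^{1/3}$ and $C^{0,1}$ right-hand side. Evans--Krylov gives uniform $C^{2,\alpha}$ bounds, so the limit $u$ lies in $C^{2,\alpha}$ near $x_0$. Since $x_0$ was arbitrary, $u\in C^2(B_2)$.
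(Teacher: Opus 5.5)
There is a genuine gap in Step~1, and it is not just the hardest step: for $n\ge4$ its conclusion is false. Condition \eqref{eq:strict-k-convex} bounds the dimension of contact sets by $n-3$, but it does not stop them from reaching $\partial B_2$. So it does not imply that any section $\{u<L+h\}$ is compactly contained.

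A concrete counterexample: write $x=(y,z)$ with $y\in\R^3$, $z\in\R^{n-3}$, and take $u=\frac12|y|^2$. Then $u$ is convex and $\sigma_3(D^2u)=1$. Every supporting affine function touches $u$ exactly along a slice $\{y=y_0\}\cap B_2$ of dimension $n-3$, so \eqref{eq:strict-k-convex} holds. Yet $u$ does not depend on $z$, so for any affine $\ell$ and any point of $\{u<\ell\}$, the set $\{u<\ell\}$ contains a ray in a $z$-direction along which $\ell$ does not decrease. Hence $\{u<\ell\}$ reaches $\partial B_2$, and no tilt of $L$ helps. The ``Caffarelli-type alternative'' you hope for (a nontrivial contact set must have dimension $>n-3$) also fails here.

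This is precisely why strict convexity is stronger than strict $3$-convexity when $n>3$. If Step~1 held, the theorem would reduce to the known strictly convex case of Chou--Wang. For $n=3$ the condition forces contact sets to be points, and your argument does work there.

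The missing idea is to replace the affine cutoff $L+h$ by a $3$-convex but nonconvex barrier adapted to the contact set. After subtracting $L$ and rotating, $\{u=0\}\subset\{y=0\}$. By compactness, $u\ge\eta_0$ outside a thin tube $\{|y|<r_0\}$. The function $w=\eta_0(M|y|^2-|z|^2+\frac14)$ with $M$ large satisfies $D^2w\in\Gamma_3$. It lies below $u$ on the boundary of $B_{3/4}\cap\{|y|<r_0\}$ and above $u$ at the origin. Because $w$ bends down along the contact directions, the component of $\{w>u\}$ containing the origin is compactly contained, and this uses only $\dim\{u=L\}\le n-3$.

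Step~3 then has to be a Pogorelov estimate with cutoff $w-u$, where $w$ is merely $3$-convex; one still has $\Delta_F(w-u)\ge-3f$. It is applied to Dirichlet approximants on a fixed ball, which converge to $u$ by a stability estimate. The paper carries this out as follows:
\begin{itemize}
\item a Jacobi inequality for $\Delta u$ in the region $d_1\le\eps\sigma_2$;
\item a boundary Jacobi inequality for $(w-u)^6(\Delta u+A+e^{|Du|^2})$;
\item an integral $W^{2,p}$ bound;
\item Moser iteration.
\end{itemize}
The constants depend on the particular $u$ through the gap $\kappa=\inf_{B_r}(w-u)$. Your Steps~2--4 are otherwise reasonable, but as written they only cover the strictly convex case.
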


\begin{remark}
    As the example shows, convexity does not automatically imply strict \(3\)-convexity for solutions of the \(3\)-Hessian equation. This is in contrast with the case \(k=2\), where strict \(2\)-convexity is automatic for convex viscosity solutions with positive right-hand side \cite{MR4246798}. For \(k\ge3\), Chou and Wang \cite{ChouWang2001} showed that strict convexity gives interior \(C^2\) regularity. However, when \(n>3\), strict convexity is stronger than the strict \(3\)-convexity condition.
\end{remark}

The regularity theory for $k$-Hessian equations has been extensively studied.
The classical solvability of Dirichlet problem for admissible solutions of \(k\)-Hessian equations was established by Caffarelli, Nirenberg and Spruck \cite{CaffarelliNirenbergSpruck1985}. 
Urbas \cite{Urbas2001} obtained Hessian bounds in terms of $W^{2,p}$ norm of the solution. 
Chou and Wang \cite{ChouWang2001} developed a variational theory and obtained Pogorelov-type estimates. 
More recently, Zhang \cite{Zhang2025} obtained $C^2$ estimates for semiconvex $k$-admissible solutions under suitable Dirichlet assumptions.

The quadratic Hessian equations $(k=2)$ has received particular attention. Warren and Yuan \cite{WarrenYuan2009} established interior Hessian estimates for the $\sigma_2$ equation in dimension three.
Interior $C^2$ estimates for convex solutions in general dimensions were
obtained by Guan and Qiu \cite{GuanQiu2019}. 
Hessian estimates for convex solutions were also proved by McGonagle, Song and Yuan \cite{McGonagleSongYuan2019} via a compactness argument. 
Shankar and Yuan \cite{ShankarYuan2020} derived interior Hessian estimates for semiconvex solutions, and later established interior regularity for almost convex viscosity solutions \cite{ShankarYuan2021}.
Mooney \cite{MR4246798} proved the strict $2$-convexity of convex viscosity
solutions and, as a consequence, obtained another proof of their interior
regularity. More recently, Qiu \cite{Qiu2024} obtained interior Hessian 
estimates for $\sigma_2$ equations with variable right-hand side in dimension three.
Shankar and Yuan \cite{ShankarYuan2025} established
interior Hessian estimates and regularity in dimension four with constant right hand side. 
Li and Wu \cite{LiWu} extended these results to general dimensions.
Chen, Jian, Tu, and Zhou \cite{ChenJianTuZhou} established interior \(C^2\) regularity for convex viscosity solutions for $\sigma_2$ equation with positive Lipschitz right-hand side.

The proof proceeds by approximation.  We solve Dirichlet problems to obtain smooth admissible solutions. 
A stability estimate gives uniform convergence to the original viscosity solution.
The strict $3$-convexity condition gives a local admissible barrier.  This barrier allows us to apply a boundary Jacobi inequality and the resulting Pogorelov-type estimates to obtain uniform $C^{1,1}$ bounds for the approximating sequence.  The Evans--Krylov--Safonov theory and compactness then yield the desired $C^2$ regularity.

The paper is organized as follows. 
In Section \ref{sec:Jacobi}, we establish the boundary Jacobi inequality.
Section \ref{sec:integral-estimates} gives the Pogorelov-type estimates.
In Section \ref{section:C2-regularity}, we construct approximating solutions, prove uniform convergence, and apply the estimates of Section \ref{sec:integral-estimates} to establish the uniform $C^{1,1}$ bound, thereby proving Theorem \ref{thm:main}.
In Appendix \ref{app:theta}, we give a complete proof of the algebraic lemma used in Lemma \ref{lem:inhomogeneous-theta}.

Throughout this paper, $B_r = B_r(0)$ denotes the open ball of radius $r$ centered at the origin, and $C(\cdots)$ denotes a positive constant depending only on the listed quantities, which may change from line to line.

\section{Boundary Jacobi inequality approach}\label{sec:Jacobi}

Throughout the a priori estimates of the proof, $u$ is a smooth $3$-convex solution of
\begin{equation}\label{eq:smooth-sigma3}
        F(D^2u)=\sigma_3(D^2u)=f(x)>0.
\end{equation}
We write
\[
        \sigma_1:=\sigma_1(D^2u)=\Delta u,
        \qquad \sigma_2:=\sigma_2(D^2u).
\]
Since $\lambda_i\ge 0$, $i=1\cdots n$, we have 
\begin{equation*}
    \sigma_1 \geq c(n) \sigma_3^{1/3}\geq c(n) f_0^{1/3}, \qquad \sigma_2 \geq c(n)\sigma_3^{2/3} \geq c(n) f_0^{2/3}.
\end{equation*}
Denote for $v \in C^2$,
\[
        F_{ij}:=\frac{\partial \sigma_3}{\partial u_{ij}}(D^2u),
        \qquad
        \Delta_F v:=F_{ij}v_{ij},
        \qquad
        |\nabla_F v|^2:=F_{ij}v_i v_j.
\]

At a fixed point, we may assume  $D^2u$ is diagonal and $\lambda_1\ge\lambda_2\ge\cdots\ge\lambda_n$. We write
\[
    d_i:=F_{ii}=\sigma_2(\lambda|i),\qquad d=(d_1,\ldots,d_n)^T,         \qquad e=(1, \ldots,1)^T.
\]
Here $\lambda|i$ means that $\lambda_i$ is omitted.  Since $\lambda\in\Gamma_3$, one has $d_i>0$ for every $i$. Also, 
$$ d_1 \leq \cdots \leq d_n. $$

\begin{lemma}
\label{lem:dynamic-jacobi}
Let $u$ be a smooth $3$-convex solution of \eqref{eq:smooth-sigma3}.  Then there exists $\eps = \eps(n)>0$, such that 
\begin{equation}\label{eq:dynamic-jacobi}
        \Delta_F \Delta u-\frac54\frac{|\nabla_F \Delta u|^2}{\Delta u}
        \ge
        \Delta f-C(n)\frac{|Df|^2}{f},
\end{equation}
whenever
\begin{equation}\label{eq:dynamic-condition}
        d_1\le \eps\,\sigma_2(\lambda).
\end{equation}
\end{lemma}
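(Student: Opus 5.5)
Differentiating \eqref{eq:smooth-sigma3} twice in the direction $e_k$ and summing over $k$ gives, at a point where $D^2u$ is diagonal,
\[
\Delta_F\Delta u=\Delta f-\sum_k\sum_{i\neq j}\sigma_1(\lambda|ij)\,u_{iik}u_{jjk}+\sum_k\sum_{i\neq j}\sigma_1(\lambda|ij)\,u_{ijk}^2 .
\]
Here we use $F^{ij,rs}$ for $\sigma_3$, whose entries are $\sigma_1(\lambda|ij)$. The plan is to bound the right-hand side from below by $\frac54\,|\nabla_F\Delta u|^2/\Delta u$ minus a term controlled by $|Df|^2/f$. We work one direction $k$ at a time: set $\xi_i=u_{iik}$ and fix $k$. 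The once-differentiated equation gives the linear constraint $\sum_i d_i\xi_i=f_k$, and the target becomes
\[
-\sum_{i\ne j}\sigma_1(\lambda|ij)\xi_i\xi_j+2\sum_{i\ne k}\sigma_1(\lambda|ik)u_{iik}^2\ \ge\ \frac54\,\frac{d_k(\sum_i\xi_i)^2}{\sigma_1}-C\frac{f_k^2}{f}.
\]
The positive term comes from the third-derivative terms $u_{kii}=u_{iik}$ with $i\ne k$, and in the $\sigma_2$ case this is where the symmetric third-derivative terms were used. The remaining off-diagonal terms $u_{ijk}$ with $i,j,k$ distinct are nonnegative because $\sigma_1(\lambda|ij)>0$, so we drop them.

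The key algebraic input is the concavity of $\sigma_3^{1/3}$. More precisely, we use the refined inequality for the quadratic form $-\sum_{i\ne j}\sigma_1(\lambda|ij)\xi_i\xi_j$ on the hyperplane $\sum_i d_i\xi_i=f_k$: it equals $-\frac{2}{3}\frac{(\sum_i d_i\xi_i)^2}{\sigma_3}$ plus a nonnegative form. The approach is to write this quadratic form explicitly via the identity $\sum_{i\ne j}\sigma_1(\lambda|ij)\xi_i\xi_j=\sigma_1(\xi)$-type expansions, i.e.\ to use $\sigma_3(\lambda+t\xi)$ expanded to second order. The inhomogeneous part is then handled by Cauchy--Schwarz, which produces $C f_k^2/f$. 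This is presumably the role of the later Lemma \ref{lem:inhomogeneous-theta} and the appendix. After this step we are reduced to the homogeneous inequality for $\xi$ with $\sum_i d_i\xi_i=0$ (up to the error).

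The main obstacle is the homogeneous case, and this is where the condition \eqref{eq:dynamic-condition} $d_1\le\eps\sigma_2$ enters. Since $d_1=\sigma_2(\lambda|1)$ is small relative to $\sigma_2$, the largest eigenvalue $\lambda_1$ dominates: $\sigma_2=d_1+\lambda_1\sigma_1(\lambda|1)$, so $\lambda_1\sigma_1(\lambda|1)\ge(1-\eps)\sigma_2$. In this regime $\sigma_1\approx\lambda_1$, and $\lambda|1$ is nearly degenerate for $\sigma_2$. We would split into two cases. If $k=1$, then $|\nabla_F\Delta u|^2$ carries the small weight $d_1$, and the constraint lets us express $\xi_1$ through $\xi_i$ with $i\ge2$; the factor $\frac54$ is then absorbed using $d_1\le\eps\sigma_2$. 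If $k\ge2$, the good term $2\sigma_1(\lambda|1k)u_{11k}^2\approx 2\sigma_1(\lambda|1k)\xi_1^2$ dominates, and we compare $\frac54 d_k(\sum\xi_i)^2/\sigma_1$ with $2\sigma_1(\lambda|1k)\xi_1^2$ plus the quadratic form in $(\xi_2,\dots,\xi_n)$, which is controlled by concavity of $\sigma_2^{1/2}$ on $\lambda|1$. Since $\sigma_1\approx\lambda_1$, the mixed terms $\xi_1\xi_i$ are weighted by $\sigma_1(\lambda|1i)$; these are handled by Cauchy--Schwarz using $\frac54<2$ and choosing $\eps(n)$ small. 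Verifying that the constants close, with $\frac54$ strictly below the threshold $2$ (or $1+\frac{1}{2}$-type constants), is the computation I expect to be delicate.
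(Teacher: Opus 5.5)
You have the reduction right, and it is the paper's: differentiate twice, keep only the terms $2\sigma_1(\lambda|ik)u_{iik}^2$, work one direction at a time under the constraint $d^T\xi=f_k$, and remove the inhomogeneity using concavity of $\sigma_3^{1/3}$ and Cauchy--Schwarz. That last step is Lemma \ref{lem:inhomogeneous-theta}. It needs a strict margin, namely the homogeneous bound with some $\bar\delta>\frac54$, and it uses $\sigma_1 d_k\ge\sigma_3$ to turn the shift error into $Cf_k^2/f$.

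The gap is the homogeneous bound itself: $z^TA^{(k)}z\ge\bar\delta\,\frac{d_k}{\sigma_1}(e^Tz)^2$ on $\{d^Tz=0\}$. This is the whole content of the lemma, and you only sketch it, from an incorrect picture of the regime $d_1\le\eps\sigma_2$. From $\lambda_1\sigma_1(\lambda|1)\ge(1-\eps)\sigma_2$ it does not follow that $\sigma_1\approx\lambda_1$. For $\lambda=(\frac12,\frac12,t)$ with $t\to0^+$, one has $d_1/\sigma_2\to0$ but $\lambda_1\approx\sigma_1/2$.

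After normalizing $\sigma_1=1$, the possible limits are $(1,0,\dots,0)$ and $(a,b,0,\dots,0)$ with $a\ge b>0$, $a+b=1$. This follows from $\sigma_2(\lambda|1)=0$, $\sigma_3\ge0$ and Newton's identity. In the rank-two regime, $d_1$, $d_2$ and $\sigma_1(\lambda|12)$ are all of order $T=\lambda_3+\dots+\lambda_n$. So for $k=2$ your ``dominant'' term $2\sigma_1(\lambda|12)\xi_1^2$ does not dominate; it vanishes at $\xi_1=0$. The estimate is carried instead by the coupling through $\xi_3,\dots,\xi_n$, whose weights are $d_j\approx ab$. The paper handles $k=1,2$ in this regime by minimizing an explicit limiting quadratic form that depends on a parameter $h\in[0,ab]$.

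Even in the rank-one regime, the case $k=1$ cannot be closed by ``absorbing $\frac54$ using $d_1\le\eps\sigma_2$''. The constrained minimum $I_1$ is itself of order $d_1$, and the constant is sharp. Take $\lambda=(1,\rho,\rho)$: the feasible set $\{d^Tz=0,\ e^Tz=1\}$ is a line. A direct computation gives $\sigma_1I_1/d_1=(1+2\rho)(\frac32+\rho)(1-\rho)^{-2}\to\frac32$. So the threshold for $k=1$ is exactly $\frac32$, not $2$, and it must come from computing the limit of the constrained minimum, not from the smallness of $d_1$.

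The paper does this with a KKT expansion, which gives the limit ratio $2-\frac{(1+3\tau)^2}{2-6\tau}\ge\frac32$ for $\tau\in[-1,0]$. It then gives separate arguments showing the ratio is at least $2$ for $k\ge2$ in the rank-one case and for $k\ge3$ in the rank-two case. Without this quantitative compactness analysis in both limits (Appendix \ref{app:theta}), your proposal does not prove the lemma.
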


\begin{proof}
For any given $x_0$, after a rotation, we may assume that $D^2u(x_0)$ is diagonal. We have 
\[
D^2u(x_0)=\mathrm{diag}(\lambda_1,\dots,\lambda_n),\qquad
\lambda_1\ge \cdots \ge \lambda_n,
\qquad
\lambda=(\lambda_1,\dots,\lambda_n)\in \Gamma_{3},
\]
and 
\[
F_{ij}=\frac{\partial\sigma_3}{\partial u_{ij}}=d_i\,\delta_{ij}.
\]
Then 
\begin{align}
\label{eq:DeltaF_D}
\Delta_{F}\Delta u & = \sum_{i,j=1}^n F_{ij} \partial_{ij}\Delta u = \sum_{i=1}^{n} F_{ii} \partial_{ii}\Delta u , \\
\label{eq:nablaF_D}
|\nabla_{F} \Delta u|^{2} & = \sum_{i,j=1}^n F_{ij} (\Delta u_i) (\Delta u_j) =  \sum_{i=1}^{n} F_{ii} (\Delta u_{i})^{2}.
\end{align}
Differentiate equation \eqref{eq:smooth-sigma3} with respect to $x_\ell $, we have 
\begin{equation}
    \label{eq:diff-main-once}
    \sum_{i,j=1}^n \frac{\partial \sigma_3 }{\partial u_{ij}} u_{ij\ell } = \sum^n_{i,j=1}F_{ij}u_{ij\ell } = \sum_{i=1}^n d_i u_{ii\ell } = f_\ell 
\end{equation}
for each $1\le \ell \le n$.
Differentiate equation \eqref{eq:smooth-sigma3} with respect to $x_\ell $ twice, we have
\begin{equation}
    \sum^n_{i,j,r,s} \frac{\partial^2\sigma_3}{\partial u_{ij}\partial u_{rs}} u_{ij\ell } u_{rs\ell }+\sum^n_{i,j=1}F_{ij} u_{ij\ell \ell }=f_{\ell \ell }.
    \label{eq:twice-diff}
\end{equation}
Direct computation gives
$$ 
    \frac{\partial^2\sigma_3 }{\partial u_{ij}\partial u_{rs}} = \begin{cases}
        \sigma_{1}(\lambda|ir), \quad & i=j, r=s, i \neq r; \\
        -\sigma_{1}(\lambda|ij), \quad & i\neq j, r=j, s=i; \\
        0 , \quad &\text{else.}            
    \end{cases}
$$
Summing up the equality \eqref{eq:twice-diff}, we get
\begin{equation*}
    \Delta_F\Delta u-\Delta f
=
-\sum_{\ell=1}^n\sum_{i\neq j}\sigma_{1}(\lambda|ij)\,u_{ii\ell}u_{jj\ell}
+\sum_{\ell=1}^n\sum_{i\neq j}\sigma_{1}(\lambda|ij)\,u_{ij\ell}^2.
\end{equation*}
Notice that 
\begin{eqnarray*}
    \sum_{\ell=1}^n\sum_{i\neq j}\sigma_{1}(\lambda|ij)\,u_{ij\ell}^2
& \ge & \sum_{m=1}^n\sum_{i\neq m}
\Bigl[ \sigma_{1}(\lambda|im)\,u_{imi}^2
+\sigma_{1}(\lambda|mi)\,u_{mii}^2 \Bigr] \\
&= & \sum_{m=1}^n\sum_{i\neq m}2\sigma_{1}(\lambda|im)\,u_{iim}^2.
\end{eqnarray*}
Thus 
\begin{equation}
\label{eq:Delta_F_Delta_u}
\Delta_F\Delta u-\Delta f
\geq \sum_{m=1}^n \Big[ 2\sum_{i:\, i\neq m}\sigma_{1}(\lambda|im)\,u_{iim}^2
- \sum_{i \neq j}\sigma_{1}(\lambda|ij)\,u_{iim}u_{jjm} \Big].
\end{equation}

For each $m\in\{1,\ldots,n\}$, define the symmetric matrix $A^{(m)}(\lambda)$ by
\begin{equation}\label{eq:A-m-def}
        A^{(m)}_{ii}=2\sigma_1(\lambda|im)\quad(i\ne m),\qquad
        A^{(m)}_{mm}=0, \qquad A^{(m)}_{ij}=-\sigma_1(\lambda|ij)\quad(i\ne j).
\end{equation}
Denote
\[
z^{(m)}:=(u_{11m},u_{22m},\dots,u_{nnm})^T.
\]
Then  
\begin{equation}\label{eq:first-derivative}
d^Tz^{(m)} = \sum_{i=1}^n d_iu_{iim}=f_m, \qquad e^Tz^{(m)} = \sum_{i=1}^n u_{iim} = \Delta u_m,
\end{equation}
together with
\begin{equation}\label{eq:gradF}
|\nabla_F\Delta u|^2
=\sum_{m=1}^n d_m\,(\Delta u)_m^2
=\sum_{m=1}^n d_m\,(e^Tz^{(m)})^2.
\end{equation}
Combining \eqref{eq:Delta_F_Delta_u}, we have 
\begin{equation}\label{eq:reduce-to-A}
\Delta_F\Delta u-\delta \frac{|\nabla_F\Delta u|^2}{\Delta u}
\ge \Delta f + \sum_{m=1}^n \Bigg[ (z^{(m)})^T A^{(m)}(\lambda)\,z^{(m)} - \delta \frac{d_m}{\sigma_1} (e^Tz^{(m)})^2\Bigg],
\end{equation}
where $\delta>1$. Applying Lemma \ref{lem:inhomogeneous-theta} with $\delta = 5/4$ and $\bar \delta = 4/3$, there exists $\eps = \eps(n)>0$ such that whenever $d_1 \leq \eps \sigma_2(\lambda)$, we have 
\begin{eqnarray*}
    (z^{(m)})^T A^{(m)}(\lambda)\,z^{(m)} - \frac{5}{4} \frac{d_m}{\sigma_1} (e^Tz^{(m)})^2 \geq -C(n) \frac{(d^Tz^{(m)})^2}{\sigma_3(\lambda)} = -C(n) \frac{f_m^2}{f}.
\end{eqnarray*}
Plugging into \eqref{eq:reduce-to-A}, we get
$$\Delta_F \Delta u-\frac54\frac{|\nabla_F \Delta u|^2}{\Delta u}  \ge  \Delta f-C(n)\frac{|Df|^2}{f},$$
which proves \eqref{eq:dynamic-jacobi}. This completes the proof.
\end{proof}

Now we prove Lemma \ref{lem:inhomogeneous-theta} which was used above. 

\begin{lemma}[Inhomogeneous $\Theta$ inequality]\label{lem:inhomogeneous-theta}
Fix numbers
\[
        1<\delta<\bar\delta<\frac32.
\]
Then there exist $\eps=\eps(n,\bar\delta)>0$ and $C=C(n,\delta,\bar\delta)>0$ such that, whenever $\lambda\in\Gamma_3$ is ordered and satisfies
\[
        d_1\le \eps\,\sigma_2(\lambda),
\]
then for every $m$ and every $z\in\R^n$,
\begin{equation}\label{eq:inhomogeneous-theta}
        z^TA^{(m)}z
        \ge
        \delta\frac{d_m}{\sigma_1}(e^Tz)^2
        -C\frac{(d^Tz)^2}{\sigma_3(\lambda)}.
\end{equation}
\end{lemma}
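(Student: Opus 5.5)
We need to prove that the quadratic form $Q(z) := z^TA^{(m)}z - \delta\frac{d_m}{\sigma_1}(e^Tz)^2$ is bounded below by $-C(d^Tz)^2/\sigma_3$.

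\textbf{Reduction to a constrained problem.} We first prove the homogeneous statement: $Q$ is positive definite on the hyperplane $\{d^Tz=0\}$, with a quantitative margin, when $\delta$ is replaced by $\bar\delta$. For $z$ with $d^Tz=0$ we will show
\[
z^TA^{(m)}z\ \ge\ \bar\delta\,\frac{d_m}{\sigma_1}(e^Tz)^2 .
\]
Granting this, the inhomogeneous inequality follows by a standard splitting. Write $z=w+t\,v$, where $d^Tw=0$ and $v$ is a fixed vector with $d^Tv=1$ (we will choose $v$ concentrated on the direction where $d$ is smallest or largest, normalized so that the entries of $A^{(m)}$ against $v$ are controlled by $\sigma_1$, $\sigma_2$, $\sigma_3$). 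Expanding and using Cauchy--Schwarz with the gap $\bar\delta-\delta>0$ to absorb the cross terms produces an error of size $C t^2=C(d^Tz)^2$, multiplied by the quantity $v^TA^{(m)}v+\frac{d_m}{\sigma_1}(e^Tv)^2$, which we expect to be bounded by $C/\sigma_3$ using the identities $\sum_i\lambda_id_i=3\sigma_3$ and $\sum_i d_i=(n-2)\sigma_2$. A convenient choice is $v=\lambda/(3\sigma_3)$, since then $d^Tv=1$. We then need to bound $\lambda^TA^{(m)}\lambda$ and $(e^T\lambda)^2 d_m/\sigma_1=\sigma_1 d_m$ by $C\sigma_3$. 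The first is delicate because of the large entries, so we instead bound the cross term $w^TA^{(m)}\lambda$ using the Newton identities $\sum_{j\neq i}\sigma_1(\lambda|ij)\lambda_j=2\sigma_2(\lambda|i)=2d_i$. This makes $(A^{(m)}\lambda)_i$ expressible through $d_i$, $\lambda_i$ and $\sigma_1(\lambda|i)$, and pairing with $w$ should collapse substantially because $d^Tw=0$.

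\textbf{The homogeneous inequality.} We use the structure of $A^{(m)}$. On the diagonal, $A^{(m)}_{ii}=2\sigma_1(\lambda|im)$; off the diagonal, the entries are $-\sigma_1(\lambda|ij)$. Writing $\sigma_1(\lambda|ij)=\sigma_1-\lambda_i-\lambda_j$ gives
\[
z^TA^{(m)}z = -\sigma_1\Big((e^Tz)^2-|z|^2\Big)+2(\lambda^Tz)(e^Tz)-2\sum_i\lambda_iz_i^2+(\text{correction from row/column }m),
\]
which is an explicit rank-few perturbation of a diagonal form. The constraint $d^Tz=0$, with $d_i=\sigma_2-\lambda_i\sigma_1+\lambda_i^2$, lets us eliminate one linear combination. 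The condition $d_1\le\eps\sigma_2$ means that $\lambda_1$ is large, namely $\lambda_1\sigma_1(\lambda|1)\approx\sigma_2$, so $\lambda$ is close to the degenerate configuration. We argue by compactness and contradiction: normalize $\sigma_2=1$, suppose the inequality fails along a sequence with $\eps\to0$, and pass to limits after rescaling $\lambda_1\to\infty$ or the lower eigenvalues to zero. In the limit, the form becomes essentially that of the $\sigma_2$ problem on $\lambda|1$, where Mooney/Shankar--Yuan type Jacobi inequalities hold with constant strictly below $3/2$. This is where the threshold $\bar\delta<3/2$ should enter.

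\textbf{Main obstacle.} The main difficulty is the homogeneous inequality, and specifically the treatment of the cases $m=1$ versus $m\ge2$ and the degenerate limits. The paper defers the algebraic core to Appendix~\ref{app:theta}, which suggests that a genuine eigenvalue computation is needed. We will first try an explicit sum-of-squares decomposition: minimize $z^TA^{(m)}z$ subject to $d^Tz=0$ and $e^Tz=1$ by Lagrange multipliers, then verify that the minimum is at least $\bar\delta d_m/\sigma_1$ using $d_1\le\eps\sigma_2$. If the explicit computation becomes intractable, we will fall back on the compactness argument above.
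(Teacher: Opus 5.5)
\textbf{There is a genuine gap in the splitting step.} Your architecture matches the paper's: a homogeneous bound on $\{d^Tz=0\}$ with constant $\bar\delta$, then splitting off one transversal direction and using the gap $\bar\delta-\delta$. But the splitting step is the actual content of this lemma once Theorem~\ref{thm:theta-k3} is available, and it fails for the vector you chose.

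With $v=\lambda/(3\sigma_3)$ you correctly observe that you need $\sigma_1d_m\le C\sigma_3$. This is false precisely in the regime of the lemma. Take $\lambda=(1,\rho,\ldots,\rho)$ with $\rho\to0^+$. Then $d_1/\sigma_2\approx\frac{(n-2)\rho}{2}\to0$, while for every $m\ge2$ one has $\sigma_1d_m\approx(n-2)\rho$ and $\sigma_3\approx\binom{n-1}{2}\rho^2$. The true inequality in fact runs the other way, $\sigma_1d_m\ge\sigma_3$.

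The cross term is no better. For $d^Tw=0$, your Newton-identity computation gives $w^TA^{(m)}\lambda=2\sum_{i\ne m}\sigma_1(\lambda|im)\lambda_iw_i$. Absorbing $2t\,w^TA^{(m)}v$ into $\eta\,w^TA^{(m)}w+Ct^2/\sigma_3$ would then require $\sum_{i\ne m}\sigma_1(\lambda|im)\lambda_i^2\le C\sigma_3$. This fails in the same example, since the $i=1$ term is of order $\rho$. The large positive term $t^2v^TA^{(m)}v$ does offset this cross term, but only when all of $w^TA^{(m)}w$ is spent on it. That leaves nothing for the $\bar\delta$ bound, so you are back to the full constrained problem.

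\textbf{Two ideas are missing.} First, adapt the transversal vector to $m$: take $v=e_m/d_m$. Then $d^Tv=1$, $v^TA^{(m)}v=0$ because $A^{(m)}_{mm}=0$, and $\frac{d_m}{\sigma_1}(e^Tv)^2=\frac{1}{\sigma_1d_m}\le\frac{1}{\sigma_3}$.

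Second, control the cross term by Cauchy--Schwarz for a positive semidefinite form. From $z^TA^{(m)}z=-D^2\sigma_3(\lambda)[z,z]+2\sum_{i\ne m}\sigma_1(\lambda|im)z_i^2$ and the concavity of $\sigma_3^{1/3}$ on $\Gamma_3$, the matrix $B^{(m)}=A^{(m)}+\frac{2}{3\sigma_3}dd^T$ is positive semidefinite. With $t=d^Tz$ and $w=z-tv$ one gets $(w^TA^{(m)}v)^2=(w^TB^{(m)}v)^2\le(w^TB^{(m)}w)(v^TB^{(m)}v)=\frac{2}{3\sigma_3}\,w^TA^{(m)}w$. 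Hence $2t\,w^TA^{(m)}v\ge-\eta\,w^TA^{(m)}w-C(\eta)t^2/\sigma_3$. To finish, apply the homogeneous bound to $w$ and the inequality $(a-b)^2\ge(1-\theta)a^2-C(\theta)b^2$ to $e^Tw=e^Tz-t/d_m$. This closes the argument once $(1-\eta)(1-\theta)\bar\delta>\delta$. Your proposal never uses the concavity of $\sigma_3^{1/3}$, and without it there is no mechanism to control the cross term.

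\textbf{The homogeneous step is only a heuristic.} After normalizing $\sigma_1=1$, the limit of $\lambda$ is either $(1,0,\ldots,0)$ or $(a,b,0,\ldots,0)$ with $a+b=1$, $b>0$. Each case, and each range of $m$, needs its own asymptotic analysis of the constrained minimizer. The constant $3/2$ comes from explicit limiting quotients such as $2-\frac{(1+3\tau)^2}{2-6\tau}$ on $\tau\in[-1,0]$, not from a known $\sigma_2$ Jacobi inequality.
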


\begin{proof}
Since
\[
\frac{\partial^2\sigma_3}{\partial \lambda_i\partial \lambda_j}(\lambda)
=
\begin{cases}
\sigma_1(\lambda|ij), & i\ne j,\\
0, & i=j.
\end{cases}
\]
Therefore
\[
D^2\sigma_3(\lambda)[z,z]
=
\sum_{i,j=1}^n
\frac{\partial^2\sigma_3}{\partial \lambda_i\partial \lambda_j}(\lambda)z_iz_j
=
\sum_{i\ne j}\sigma_1(\lambda|ij)z_iz_j.
\]
Then, for $z\in\R^n$,
\begin{equation}\label{eq:A-identity}
        z^T A^{(m)}z
        =-D^2\sigma_3(\lambda)[z,z]
          +2\sum_{i\ne m}\sigma_1(\lambda|im)z_i^2.
\end{equation}
Since \(\tilde{F}:= \sigma_3^{1/3}\) is concave on \(\Gamma_3\), for every direction \(z\in \R^n\) we have
\[
D^2\tilde F(\lambda)[z,z]\le 0.
\]
From the definition of $d$, we have 
\[
D\sigma_3(\lambda)[z]= \sum_{i=1}^n \frac{\partial \sigma_3}{\partial\lambda_i} z_i = d^Tz.
\]
By differentiating \(\tilde F=\sigma_3^{1/3}\), we get
\[
D^2\tilde F(\lambda)[z,z]
=
\frac{1}{3}\sigma_3(\lambda)^{-2/3}
D^2\sigma_3(\lambda)[z,z]
-
\frac{2}{9}\sigma_3(\lambda)^{-5/3}
\bigl(D\sigma_3(\lambda)[z]\bigr)^2.
\]
Using \(D^2\tilde F(\lambda)[z,z]\le 0\), it follows that
\begin{equation}\label{eq:hodge}
        D^2\sigma_3(\lambda)[z,z]
        \le \frac{2}{3}\frac{(d^Tz)^2}{\sigma_3(\lambda)}.
\end{equation}
Consequently
\begin{equation}\label{eq:B-psd}
        B^{(m)}:=A^{(m)}+\frac{2}{3\sigma_3(\lambda)}dd^T
        \quad\text{ is positive semidefinite.}
\end{equation}
Define
\begin{equation}\label{eq:I-Theta-def-main}
        I_m(\lambda):=\inf_{d^Tz=0,\ e^Tz=1} z^TA^{(m)}z,
        \qquad
        \Theta_m(\lambda):=\frac{\sigma_1 }{d_m}I_m(\lambda),
        \qquad
        \Theta(\lambda):=\min_m\Theta_m(\lambda).
\end{equation}
Appendix \ref{app:theta} proves the following algebraic theorem: for every fixed
\[
        1<\bar\delta<\frac32
\]
there exists $\eps=\eps(n,\bar\delta)>0$ such that, for ordered $\lambda\in\Gamma_3$,
\begin{equation}\label{eq:theta-theorem-used}
        d_1\le \eps\,\sigma_2(\lambda)
        \quad\Longrightarrow\quad
        \Theta(\lambda)\ge \bar\delta.
\end{equation}

If $d^T z=0, e^Tz = 0$, then \eqref{eq:inhomogeneous-theta} follows immediately from $z^T A^{(m)} z \geq 0$ by \eqref{eq:A-identity} and \eqref{eq:hodge}.

If $ d^Tz =0, e^Tz \neq 0$, then  \eqref{eq:theta-theorem-used} implies
$$ z^T A^{(m)} z \geq \bar\delta \frac{d_m}{\sigma_1} (e^T z)^2 \geq \delta \frac{d_m}{\sigma_1} (e^T z)^2, $$
thus \eqref{eq:inhomogeneous-theta} follows.

Now we can assume $c:= d^T z \neq 0$.  Set
\[
    w:=\frac{e_m}{d_m},\qquad y:=z-cw,
\]
where $e_m = (0,\cdots,\overset{(m)}1,\cdots,0) \in \R^n$. 
Then $d^Tw=1$ and $d^Ty=0$.  Also $w^TA^{(m)}w=0$ since the $(m,m)$ entry of $A^{(m)}$ is zero. Recall that $B^{(m)}$ is positive semidefinite from \eqref{eq:B-psd}. The Cauchy-Schwarz inequality gives 
\begin{eqnarray}
\label{eq:Cauchy-yBw}
    (y^T B^{(m)}w)^2 \leq (y^T B^{(m)}y) \cdot (w^T B^{(m)}w).
\end{eqnarray}
Since $d^Ty=0$, direct computation gives 
$$ y^T B^{(m)}y= y^T A^{(m)}y, \qquad y^T B^{(m)}w=y^T A^{(m)}w, \qquad w^T B^{(m)}w = \frac{2}{3\sigma_3(\lambda)}. $$
Then \eqref{eq:Cauchy-yBw} gives
\begin{eqnarray*}
    (y^T A^{(m)}w)^2 \leq \frac{2}{3\sigma_3(\lambda)} \, y^T B^{(m)}y.
\end{eqnarray*}
Therefore, for every $\eta\in(0,1)$,
\begin{equation}\label{eq:cross-theta-control}
        2c y^T A^{(m)}w
        \ge -\eta\, y^T A^{(m)}y -C(\eta)\frac{c^2}{\sigma_3(\lambda)}.
\end{equation}
Using \eqref{eq:theta-theorem-used} on $y\in\{d^Tz=0\}$,
\[
        y^T A^{(m)}y
        \ge \bar\delta\frac{d_m}{\sigma_1}(e^Ty)^2.
\]
Hence \eqref{eq:cross-theta-control} implies
\begin{eqnarray}
\label{eq:Az-y-bound}
        z^T A^{(m)}z = y^T A^{(m)} y + 2c\, y^T A^{(m)}w \geq  (1-\eta)\bar\delta\frac{d_m}{\sigma_1}(e^Ty)^2
        -C(\eta)\frac{c^2}{\sigma_3(\lambda)}.
\end{eqnarray}
Choose $\eta$ so small that $(1-\eta)\bar\delta>\delta$.  Since
\[
        e^Ty=e^Tz-\frac{c}{d_m},
\]
we may choose $\theta\in(0,1)$ so small that
\[
    (1-\eta)(1-\theta)\bar\delta>\delta.
\]
The elementary inequality $(a-b)^2\ge(1-\theta)a^2-C(\theta) b^2$ gives
\begin{eqnarray}
\label{eq:eTy}
    \frac{d_m}{\sigma_1}(e^Ty)^2
        \ge (1-\theta)\frac{d_m}{\sigma_1}(e^Tz)^2
        -C(\theta)\frac{c^2}{\sigma_1 d_m}.
\end{eqnarray}
Using Newton-Maclaurin inequality, 
\begin{equation}
    \sigma_1 d_m = \sigma_1(\lambda|m) \sigma_2(\lambda|m) + \lambda_m \sigma_2(\lambda|m) \geq \sigma_3(\lambda|m) + \lambda_m \sigma_2(\lambda|m) = \sigma_3(\lambda).
    \label{eq:sigma_1-dm}
\end{equation}
Combining \eqref{eq:Az-y-bound}-\eqref{eq:sigma_1-dm}, we get \eqref{eq:inhomogeneous-theta}, which completes the proof of Lemma \ref{lem:inhomogeneous-theta}.
\end{proof}

\medskip

Before proving boundary Jacobi inequality, we give a lower bound for $\Delta_F \Delta u$.

\begin{lemma}
\label{lem:subharmonicity}
Let $u$ be a smooth $3$-convex solution of \eqref{eq:smooth-sigma3}.  Then
\begin{equation}\label{eq:subharmonicity}
        \Delta_F \Delta u \ge
        \Delta f-\frac23\frac{|Df|^2}{f}.
\end{equation}
\end{lemma}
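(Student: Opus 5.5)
We prove \eqref{eq:subharmonicity} by reusing the computation in the proof of Lemma \ref{lem:dynamic-jacobi} and replacing the delicate $\Theta$-estimate with the concavity inequality \eqref{eq:hodge}. At a point $x_0$ where $D^2u$ is diagonal, \eqref{eq:Delta_F_Delta_u} and the notation \eqref{eq:A-m-def} give
\[
\Delta_F\Delta u-\Delta f\ \ge\ \sum_{m=1}^n (z^{(m)})^TA^{(m)}(\lambda)\,z^{(m)}.
\]
So it suffices to show that, for each $m$,
\[
(z^{(m)})^TA^{(m)}z^{(m)}\ \ge\ -\frac{2}{3}\frac{(d^Tz^{(m)})^2}{\sigma_3(\lambda)},
\]
because $d^Tz^{(m)}=f_m$ by \eqref{eq:first-derivative} and $\sigma_3(\lambda)=f$.

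This pointwise bound follows from \eqref{eq:A-identity} and \eqref{eq:hodge}. By \eqref{eq:A-identity},
\[
z^TA^{(m)}z=-D^2\sigma_3(\lambda)[z,z]+2\sum_{i\ne m}\sigma_1(\lambda|im)z_i^2 .
\]
Since $u$ is convex, every $\lambda_j\ge 0$, so $\sigma_1(\lambda|im)\ge 0$ and the diagonal sum is nonnegative. For general $3$-convex $\lambda\in\Gamma_3$, one still has $\sigma_1(\lambda|im)>0$, because $\lambda|im\in\Gamma_1$ whenever $\lambda\in\Gamma_3$. Dropping this term and applying \eqref{eq:hodge}, which comes from the concavity of $\sigma_3^{1/3}$ on $\Gamma_3$, gives
\[
z^TA^{(m)}z\ \ge\ -\frac23\frac{(d^Tz)^2}{\sigma_3(\lambda)}.
\]
This is exactly the statement that $B^{(m)}$ is positive semidefinite, recorded in \eqref{eq:B-psd}.

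Summing over $m$ then gives
\[
\Delta_F\Delta u-\Delta f\ \ge\ -\frac23\sum_m\frac{f_m^2}{f}=-\frac23\frac{|Df|^2}{f},
\]
which is \eqref{eq:subharmonicity}. Both sides are rotation invariant: $\Delta_F\Delta u$ is a trace against $F_{ij}$, and $|Df|^2$ and $\Delta f$ are invariant. Hence the diagonalization at $x_0$ is harmless, and the inequality holds at every point.

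The only step that needs care is the lower bound on the third-derivative terms already used for \eqref{eq:Delta_F_Delta_u}. There the off-diagonal terms $u_{ij\ell}^2$ with $i\ne j$, which have nonnegative coefficients $\sigma_1(\lambda|ij)$, were discarded except for the ones with $\ell\in\{i,j\}$. This is legitimate since those coefficients are nonnegative. No smallness condition such as \eqref{eq:dynamic-condition} is needed, because we ask only for $\delta=0$ rather than $\delta=5/4$.
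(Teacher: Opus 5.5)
Your proposal is correct and is essentially the paper's own proof: start from \eqref{eq:Delta_F_Delta_u} and bound each $(z^{(m)})^TA^{(m)}z^{(m)}$ below by $-\frac23(d^Tz^{(m)})^2/\sigma_3(\lambda)$ via \eqref{eq:A-identity} and \eqref{eq:hodge}, then sum using $d^Tz^{(m)}=f_m$. Your observation that $\sigma_1(\lambda|im)>0$ on $\Gamma_3$ means convexity is not needed to drop the diagonal term, which is a correct and useful clarification since the approximating solutions are only $3$-convex.
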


\begin{proof}
Recalling \eqref{eq:Delta_F_Delta_u} and the definition of $A^{(m)}$ and $z^{(m)}$, we have 
$$ \Delta_F \Delta u - \Delta f \geq \sum_{m=1}^n \, (z^{(m)})^T A^{(m)} z^{(m)}.  $$
By \eqref{eq:hodge} and \eqref{eq:A-identity}, for every $m$ and every $z\in\R^n$,
\[
        z^TA^{(m)}z
        \ge -\frac23\frac{(d^Tz)^2}{\sigma_3(\lambda)}.
\]
Apply this to $z^{(m)}=(u_{11m},\ldots,u_{nnm})^T$. Since $d^Tz^{(m)}=f_m$, we obtain
\[
        \Delta_F \Delta u
        \ge \Delta f-\frac23\frac{|Df|^2}{f}.
\]
\end{proof}

\subsection{Boundary Jacobi inequality}\label{sec:boundary-jacobi}

Set
\begin{equation}\label{eq:b-def}
        b:=\Delta u+A+e^{|Du|^2},
\end{equation}
where $A=A(n,f_0,\|u\|_{C^{0,1}(B_1)})$ is chosen below.  Since $\Delta u\ge c(n)f_0^{1/3}$, after increasing $A$ if necessary, we have
\begin{equation}\label{eq:b-comparable}
        \Delta u \le b\le C \Delta u 
\end{equation}
with $C=C(n,f_0,\|u\|_{C^{0,1}(B_1)})$.

\begin{proposition}[Boundary Jacobi inequality]\label{prop:boundary-jacobi}
Let $u$ be a smooth $3$-convex solution to equation \eqref{eq:smooth-sigma3} in $B_2$ and let $\Omega\subset B_1$ be a connected open subset. Then for any $\phi\in C^2(\Omega)$ satisfying $\phi>0$ in $\Omega$, we have 
\begin{equation}\label{eq:boundary-jacobi}
        \Delta_F(\phi^{6} b)
        +\frac12\frac{|\nabla_F(\phi^6  b)|^2}{\phi^6 b}
        \ge
        6\phi^{5}b\Delta_F\phi
        +\phi^6\Delta f
        -C \Delta u ,
\end{equation}
where $C=C(n,f_0,\|f\|_{C^{0,1}(B_1)},\|u\|_{C^{0,1}(B_1)},\|
\phi\|_{C^{0,1}(\Omega)})$. 
\end{proposition}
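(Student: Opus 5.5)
Write $\psi:=\phi^6$ and $Q:=\Delta_F(\psi b)+\tfrac12|\nabla_F(\psi b)|^2/(\psi b)$. The plan is a pointwise computation at an arbitrary $x_0\in\Omega$ with $D^2u(x_0)$ diagonal, using Lemma \ref{lem:dynamic-jacobi} or Lemma \ref{lem:subharmonicity} according to whether \eqref{eq:dynamic-condition} holds at $x_0$.

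\textbf{Step 1: expansion.} Expanding the product gives
\[
Q=\psi\Delta_F b+b\Delta_F\psi+2\nabla_F\psi\cdot\nabla_F b+\tfrac12\Big(\psi\frac{|\nabla_F b|^2}{b}+2\nabla_F\psi\cdot\nabla_F b+b\frac{|\nabla_F\psi|^2}{\psi}\Big).
\]
Here $\Delta_F\psi=6\phi^5\Delta_F\phi+30\phi^4|\nabla_F\phi|^2$ and $|\nabla_F\psi|^2/\psi=36\phi^4|\nabla_F\phi|^2$. The term $b\Delta_F\psi$ supplies exactly $6\phi^5b\Delta_F\phi$, together with a \emph{good} term $48\phi^4 b|\nabla_F\phi|^2\ge 0$ once the last summand is included. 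The cross terms total $3\nabla_F\psi\cdot\nabla_F b=18\phi^5\nabla_F\phi\cdot\nabla_F b$. By Cauchy--Schwarz,
\[
18\phi^5|\nabla_F\phi\cdot\nabla_F b|\le \eta\,\phi^6\frac{|\nabla_F b|^2}{b}+\frac{81}{\eta}\phi^4 b|\nabla_F\phi|^2 .
\]
This pays with the good term only if $81/\eta\le 48$, which fails for small $\eta$. So the exponent $6$ and the factor $\tfrac12$ must be balanced against the $\tfrac54$ coming from Lemma \ref{lem:dynamic-jacobi}.

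\textbf{Step 2: the gradient balance.} In the good case \eqref{eq:dynamic-condition}, we need
\[
\psi\Big(\Delta_F b+\tfrac12\frac{|\nabla_F b|^2}{b}\Big)\ge \psi\,(1+\kappa)\frac{|\nabla_F b|^2}{b}-C\Delta u\,\psi .
\]
For this we use the terms $e^{|Du|^2}$ and $A$ in $b$. First, $\Delta_F e^{|Du|^2}\ge 2e^{|Du|^2}\sum_i d_i\lambda_i^2 -C$. Second, Lemma \ref{lem:dynamic-jacobi} gives $\Delta_F\Delta u\ge \tfrac54|\nabla_F\Delta u|^2/\Delta u-C$. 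Since $b\ge\Delta u$ and $\nabla_F b=\nabla_F\Delta u+\nabla_F e^{|Du|^2}$, we get $|\nabla_F\Delta u|^2/\Delta u\ge |\nabla_F\Delta u|^2/b$. The extra gradient piece $|\nabla_F e^{|Du|^2}|^2\le 4e^{2|Du|^2}|Du|^2\sum_i d_i\lambda_i^2$ is absorbed by the positive term $2e^{|Du|^2}\sum_i d_i\lambda_i^2$, because $e^{|Du|^2}/b\le e^{|Du|^2}/A$ is small once $A$ is large (depending on $\|u\|_{C^{0,1}}$). This yields $\Delta_F b\ge(\tfrac54-\tau)|\nabla_F b|^2/b-C$. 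Adding the $\tfrac12$ term gives a coefficient $\tfrac74-\tau$ in front of $\psi|\nabla_F b|^2/b$, and this beats the cross term. Indeed, we need $\eta\le\tfrac74-\tau$, hence $81/\eta\approx 46.3<48$. This is the reason for the specific constants $6$, $\tfrac12$, $\tfrac54$. The leftover error $-C\psi\le -C\Delta u$ uses $\Delta u\ge c f_0^{1/3}$. The term $\psi\Delta f$ survives; the $|Df|^2/f$ terms go into $C$.

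\textbf{Step 3: the bad case, $d_1>\eps\sigma_2$.} Here all $d_i$ are comparable to $\sigma_2$. With $\lambda_1\approx\sigma_1$, this gives $\sigma_2\approx\lambda_1 d_1$, so in particular $d_1\lambda_1^2\gtrsim\sigma_2\sigma_1$, which is large. Lemma \ref{lem:subharmonicity} only gives $\Delta_F\Delta u\ge -C$, so the gradient terms must instead be controlled by the $e^{|Du|^2}$ term. We have $|\nabla_F b|^2\lesssim |\nabla_F\Delta u|^2+e^{2|Du|^2}\sum_i d_i\lambda_i^2$. The quantity $|\nabla_F\Delta u|^2=\sum_m d_m(\Delta u_m)^2$ is the problematic one: it is not directly dominated. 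I would first try the following. In this regime, $\sigma_3^{1/3}$ is uniformly elliptic with ellipticity ratio bounded in terms of $\eps$, and concavity yields the stronger estimate
\[
\Delta_F\Delta u\ge c(\eps)\frac{|\nabla_F\Delta u|^2}{\Delta u}\cdot\frac{\sigma_1 d_{\min}}{\sigma_2}-C,
\]
obtained from \eqref{eq:B-psd} by using the gap of $A^{(m)}$ on directions with $e^Tz\ne0$ (all $\sigma_1(\lambda|ij)$ are comparable to $\lambda_1$). If this improvement is not sufficient, the fallback is to show $\Theta(\lambda)\ge\bar\delta$ also in this regime for the relevant $m$.

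I expect this bad case to be the main obstacle: obtaining a coefficient at least $\tfrac54$ for $|\nabla_F\Delta u|^2/\Delta u$ without the hypothesis \eqref{eq:dynamic-condition}, uniformly as $\eps$ is fixed.
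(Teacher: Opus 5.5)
Your expansion and your handling of the region $d_1\le\eps\sigma_2$ are correct and agree in substance with the paper's Step 1. The paper discards the $\tfrac12$ term and uses $2F_{ij}\varphi_ib_j\ge-\tfrac56\, b|\nabla_F\varphi|^2/\varphi-\tfrac65\,\varphi|\nabla_F b|^2/b$, with $\varphi=\phi^6$ your $\psi$. So it needs coefficient $\tfrac65$ where you need $\tfrac{19}{16}$, and both are below $\tfrac54$.

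The genuine gap is the region $d_1>\eps\sigma_2$, which you leave open, and neither suggested route is substantiated. A concavity gain would give some $c(\eps)$ with no reason to reach $\tfrac54$: the factor $\sigma_1 d_{\min}/\sigma_2$ is only bounded below by $\eps\sigma_1$, and $\sigma_1$ may be of order $f_0^{1/3}$. The Appendix proves $\Theta\ge\bar\delta$ only as $d_1/\sigma_2\to0$.

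The missing idea is that in this region you need no gradient term from $\Delta_F\Delta u$ at all. In $Q$, $\nabla_F b$ enters with a bad sign only through the cross term, and the $\tfrac12$ term absorbs it completely: $3|\langle\nabla_F\psi,\nabla_F b\rangle|\le\tfrac12\psi|\nabla_F b|^2/b+\tfrac92\, b|\nabla_F\psi|^2/\psi$. Hence
\[
Q\ge\psi\Delta_F b+6\phi^5b\Delta_F\phi-114\,\phi^4b|\nabla_F\phi|^2\ge\psi\Delta_F b+6\phi^5b\Delta_F\phi-C_1\phi^4\sigma_2\Delta u,
\]
using $F_{ii}\le(n-2)\sigma_2$ and $b\le C\Delta u$. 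Combine Lemma \ref{lem:subharmonicity} with $\Delta_F e^{|Du|^2}\ge2e^{|Du|^2}d_1\lambda_1^2-C$. When $d_1>\eps\sigma_2$ this gives $\psi\Delta_F b\ge\psi\Delta f-C+2n^{-2}\eps\,\phi^6\sigma_2(\Delta u)^2$. Then $C_1\phi^4\sigma_2\Delta u\le2n^{-2}\eps\,\phi^6\sigma_2(\Delta u)^2+C_2\Delta u$: if $\phi^2\Delta u$ is large the first term dominates, otherwise use $\sigma_2\le(\Delta u)^2/2$. This is exactly where the error $-C\Delta u$ (rather than $-C$) in \eqref{eq:boundary-jacobi} comes from.

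The paper does the same thing in two pieces. On $\Omega_2=\{|\nabla_F\varphi|/\varphi\le\tfrac16|\nabla_F b|/b\}$ the cross term is absorbed by the $\tfrac12$ term, and Lemma \ref{lem:subharmonicity} suffices with no restriction on $d_1$. On $\Omega_1^c\cap\Omega_2^c$ one has $|\nabla_F b|^2/b\le36\,b|\nabla_F\varphi|^2/\varphi^2\le C\sigma_2\Delta u\,\varphi^{-1/3}$, which is absorbed by $d_1\lambda_1^2$ as above.

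This absorption needs the full strength of the good term. Your bound $d_1\lambda_1^2\gtrsim\sigma_2\sigma_1$ rests on the heuristic $\sigma_2\approx\lambda_1 d_1$, which does not hold in general since $\sigma_2=\lambda_1\sigma_1(\lambda|1)+d_1$. It loses a factor of $\Delta u$, and with it the term $\phi^4\sigma_2\Delta u$ could not be absorbed. The right bound is $d_1\lambda_1^2\ge\eps\sigma_2\lambda_1^2\ge n^{-2}\eps\,\sigma_2(\Delta u)^2$, immediate from $d_1>\eps\sigma_2$ and $\lambda_1\ge\sigma_1/n$.
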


\begin{proof}
    Denote $\varphi := \phi^{6}, J:= \Delta_F (\varphi b) + \frac{1}{2} \frac{|\nabla_F(\varphi b)|^2}{\varphi b}$ and 
    \begin{eqnarray*}
    \Omega_1&:=&\left\{x\in \Omega \mid  d_1 \leq \eps \sigma_2 \right\}, \\
    \Omega_2 &:=& \left\{ x\in \Omega \mid \frac{|\nabla_F \varphi|}{\varphi} \leq \frac{1}{6} \frac{|\nabla_F b|}{b} \right\},
    \end{eqnarray*}
    where $\eps = \eps(n)$ comes from Lemma \ref{lem:dynamic-jacobi}. 
    We will prove the target inequality \eqref{eq:boundary-jacobi} in $\Omega_1$, $\Omega_2$ and $\Omega_1^c\cap \Omega_2^c$ respectively. 

    \textbf{Step 1:} In $\Omega_1$. 
    Direct computation shows that 
    \begin{equation}
    \label{eq:nabla_F-varphi-b}
        \begin{aligned}
            \Delta_F (\varphi b) & = \varphi \Delta_F b + 2 \sum_{i,j=1}^nF_{ij} \varphi_i b_j + b\Delta_F \varphi \\
            & \geq \varphi \Delta_F b - \frac{5}{6} \frac{|\nabla_F\varphi|^2}{\varphi} b - \frac{6}{5} \frac{|\nabla_F b|^2}{b}\varphi + b\Delta_F \varphi \\
            & = b \left( \Delta_F \varphi - \frac{5}{6} \frac{|\nabla_F\varphi|^2}{\varphi} \right) + \varphi \left( \Delta_F b - \frac{6}{5} \frac{|\nabla_F b|^2}{b} \right).
        \end{aligned}
    \end{equation}
    Since $\varphi = \phi^6$, we have 
    \begin{equation}
    \label{eq:nabla_F-varphi}
        \Delta_F \varphi - \frac{5}{6} \frac{|\nabla_F\varphi|^2}{\varphi} = 6 \phi^{5} \Delta_F \phi.
    \end{equation}
    Young's inequality yields 
    \begin{equation*}
        |\nabla_F b|^2 = |\nabla_F(\Delta u + e^{|Du|^2})|^2 \leq \left(1+\frac{1}{24}\right)|\nabla_F \Delta u|^2 +  C(n)  |\nabla_F e^{|Du|^2}|^2,
    \end{equation*}
    and thus
    \begin{equation*}
        \Delta_F b - \frac{6}{5} \frac{|\nabla_F b|^2}{b} \geq \Delta_F \Delta u + \Delta_F e^{|Du|^2} - \frac{5}{4} \frac{|\nabla_F \Delta u|^2}{\Delta u} - C(n) \frac{|\nabla_F e^{|Du|^2}|^2}{A}.
    \end{equation*}
    In $\Omega_1$, we can use Lemma \ref{lem:dynamic-jacobi}. As a result, we have 
    \begin{equation*}
        \Delta_F \Delta u - \frac{5}{4} \frac{|\nabla_{F}\Delta u|^{2}}{\Delta u} \geq \Delta f-C(n,f_0,\|f\|_{C^{0,1}}) \quad \text{ in } \Omega_1.
    \end{equation*}
    So we have 
    $$
    \Delta_F b - \frac{6}{5} \frac{|\nabla_F b|^2}{b} \geq \Delta f-C(n,f_0,\|f\|_{C^{0,1}(B_2)}) + \Delta_F e^{|Du|^2} - C(n) \frac{|\nabla_F e^{|Du|^2}|^2}{A}.
    $$
    Notice that 
    $$ |\nabla_F e^{|Du|^2}|^2 = e^{2|Du|^2} |\nabla_F|Du|^2|^2. $$
    Using \eqref{eq:diff-main-once}, we have
    \begin{eqnarray}
        \Delta_F e^{|Du|^2} &=& 2 e^{|Du|^2} \sum_{i,j,k} F_{ij} u_{ki} u_{kj} + 2 e^{|Du|^2} \sum_{i,j,k} u_k F_{ij} u_{ijk} + e^{|Du|^2} |\nabla_F|Du|^2|^2 \notag \\
        &\geq & 0 - C(\|u\|_{C^{0,1}(B_2)}, \|f\|_{C^{0,1}(B_2)}) + e^{|Du|^2} |\nabla_F|Du|^2|^2 .
        \label{eq:Delta_F-exp}
    \end{eqnarray}
    Therefore, taking $A = A(n, f_0, \|u\|_{C^{0,1}(B_1)})$ large enough, we have
    \begin{eqnarray}\label{eq:nabla_F-b}
        \Delta_F b - \frac{6}{5} \frac{|\nabla_F b|^2}{b} \geq \Delta f - C(n,f_0,\|f\|_{C^{0,1}(B_2)}, \|u\|_{C^{0,1}(B_2)}) 
    \end{eqnarray}
    So, in $\Omega_1$, by \eqref{eq:nabla_F-varphi-b} \eqref{eq:nabla_F-varphi} \eqref{eq:nabla_F-b} we have thus proved 
    \begin{equation*}
        \begin{aligned}
            J &\geq 6 \phi^{5} b \Delta_F \phi + \varphi\big[\Delta f - C(n, f_0, \|f\|_{C^{0,1}(B_2)}, \|u\|_{C^{0,1}(B_2)}) \big] \\
            & \geq 6 \phi^{5} b \Delta_F \phi + \varphi \Delta f - C(n, f_0, \|f\|_{C^{0,1}(B_2)}, \|u\|_{C^{0,1}(B_2)},\|\phi\|_{L^{\infty}(B_2)}).
        \end{aligned}
    \end{equation*}

    \textbf{Step 2:} In $\Omega_2$. We have 
    $$ |\sum_{i,j=1}^nF_{ij}\varphi_ib_j| \leq |\nabla_F \varphi| \cdot |\nabla_F b| \leq \frac{\varphi}{6b}  |\nabla_F b|^2 . $$
    where the last inequality is due to the condition of $\Omega_2$. This implies 
    \begin{eqnarray*}
        J &=& \varphi \Delta_F b + 2 \sum_{i,j=1}^nF_{ij} \varphi_i b_j + b\Delta_F \varphi + \frac{1}{2} \left( \frac{b}{\varphi}|\nabla_F \varphi|^2 + 2\sum_{i,j=1}^nF_{ij} \varphi_i b_j + \frac{\varphi}{b}|\nabla_F b|^2  \right) \\
        &\geq & \varphi \Delta_F b + b\Delta_F \varphi + 3 \sum_{i,j=1}^nF_{ij} \varphi_i b_j + \frac{\varphi}{2b} |\nabla_F b|^2 \\
        &\geq & \varphi \Delta_F b + b\Delta_F \varphi.
    \end{eqnarray*}
    From \eqref{eq:nabla_F-varphi}, we know $\Delta_F \varphi \geq 6\phi^{5} \Delta_F \phi$. 
    Combining Lemma \ref{lem:subharmonicity} and \eqref{eq:Delta_F-exp}, we know 
    $$ \Delta_F b = \Delta_F \Delta u + \Delta_F e^{|Du|^2} \geq \Delta f - C(n, f_0, \|u\|_{C^{0,1}(B_2)}, \|f\|_{C^{0,1}(B_2)}) $$
    So, in $\Omega_2$, we have proved
    $$ J \geq 6 \phi^{5} b \Delta_F \phi + \varphi \Delta f - C(n, f_0, \|f\|_{C^{0,1}(B_2)}, \|u\|_{C^{0,1}(B_2)},\|\phi\|_{L^{\infty}(B_2)}) . $$

    \textbf{Step 3:} In $\Omega_1^c \cap \Omega_2^c $. 
    Using \eqref{eq:nabla_F-varphi-b} and \eqref{eq:nabla_F-varphi}, we have 
    $$ J \geq 6 \phi^{5} b \Delta_F \phi +\varphi \left( \Delta_F \Delta u + \Delta_F e^{|Du|^2} - \frac{6}{5} \frac{|\nabla_F b|^2}{b} \right). $$
    From Lemma \ref{lem:subharmonicity}, we have $\Delta_F\Delta u \geq \Delta f - C(n)|Df|^2/f$. 
    To estimate $\Delta_F e^{|Du|^2}$, we may assume $D^2u(x)$ is diagonal at a given point. Then
    \begin{eqnarray*}
        \Delta_F e^{|Du|^2} &=& 2 e^{|Du|^2} \sum_{i,j,k} F_{ij} u_{ki} u_{kj} + 2 e^{|Du|^2} \sum_{i,j,k} u_k F_{ij} u_{ijk} +  e^{|Du|^2} |\nabla_F|Du|^2|^2.
    \end{eqnarray*}
    By the condition of $\Omega_1^c$, 
    \begin{eqnarray*}
        \sum_{i,j,k} F_{ij} u_{ki} u_{kj} = \sum_{i=1}^n d_i \lambda_i^2 \geq d_1 \lambda_1^2 \geq c_1(n) \, \sigma_2(D^2 u) \cdot (\Delta u)^2.
    \end{eqnarray*}
    Combining \eqref{eq:diff-main-once}, we have 
    \begin{eqnarray*}
        \Delta_F e^{|Du|^2} \geq c_1(n) \, \sigma_2(D^2 u) \cdot (\Delta u)^2 - C(n,\|u\|_{C^{0,1}(B_2)}, \|f\|_{C^{0,1}(B_2)}),
    \end{eqnarray*}
    Meanwhile, from the definition of $\Omega_2^c$, we have 
    $$ \frac{|\nabla_F b|^2}{b} \leq 36\, b\, \frac{|\nabla_F\varphi|^2}{\varphi^2} \leq C_1(n, f_0,\|\phi\|_{C^{0,1}(B_2)}, \|u\|_{C^{0,1}(B_2)})\, \sigma_2(D^2 u) \, \Delta u\, \varphi^{-\frac{1}{3}}, $$
    where we use $b \leq C(n, f_0, \|u\|_{C^{0,1}(B_2)}) \Delta u$ and $F_{ii} = \sigma_2(\lambda|i) \leq C(n)\, \sigma_2(D^2 u)$ in the last inequality. Combining these inequalities, we have 
    \begin{eqnarray*}
        J \geq 6 \phi^{5} b \Delta_F \phi +\varphi \Delta f - C + c_1\, \sigma_2 \cdot (\Delta u)^2 \varphi - C_1 \, \sigma_2  \, \Delta u\, \varphi^{ \frac{2}{3}}.
    \end{eqnarray*}
    Notice that $(\Delta u)^2 \geq 2\sigma_2$, there exists $C_2 = C_2(c_1,C_1)>0$ large enough such that 
    \begin{eqnarray*}
        c_1\, \sigma_2 \cdot (\Delta u)^2 \varphi + C_2 \Delta u &=& \frac{c_1}{2}\, \sigma_2 \cdot (\Delta u)^2 \varphi + \frac{c_1}{2} \, \sigma_2 \cdot (\Delta u)^2 \varphi+ C_2 \Delta u \\
        & \geq & 3\left( \frac{c_1^2}{4}C_2\right)^{\frac13} \sigma_2^{\frac23} (\Delta u)^{\frac53} \varphi^{\frac{2}{3}} \\
        & \geq & C_1 \, \sigma_2  \, \Delta u\, \varphi^{ \frac{2}{3}}.
    \end{eqnarray*}
    Thus 
    \begin{eqnarray*}
        J  \geq 6 \phi^{5} b \Delta_F \phi +\varphi \Delta f - C(n, f_0,\|f\|_{C^{0,1}(B_2)}, \|u\|_{C^{0,1}(B_2)},\|\phi\|_{C^{0,1}(B_2)}) \Delta u.
    \end{eqnarray*}
    This completes the proof of inequality \eqref{eq:boundary-jacobi} in the whole region.
\end{proof}

\section{Pogorelov-type estimates for smooth solutions}\label{sec:integral-estimates}

\begin{proposition}[Pogorelov-type $W^{2,p}$ estimate]
\label{prop:integral}
Let $u$ be a smooth $3$-convex solution of \eqref{eq:smooth-sigma3} in $B_2$ with $f\in C^\infty(B_2)$ and $f\geq f_0>0$. Suppose $\Omega\subset B_1$ is a connected open subset in $B_1$, and  $w\in C^2(\R^n)$ is $3$-convex satisfying
\[
        w>u\quad\text{in }\Omega,
        \qquad
        w=u\quad\text{on }\partial\Omega.
\]
Then for $\varphi = (w-u)^6$ and every integer $p\geq 1$, we have 
\begin{equation}\label{eq:integral-estimate}
    \int_\Omega (\Delta u)^p\varphi^{p-1}\dd x \le p! \, C^p,
\end{equation}
where $ C= C(n, f_0, \|f\|_{C^{0,1}(B_2)}, \|u\|_{L^\infty(B_2)}, \|w\|_{C^{0,1}(B_2)})$. 
\end{proposition}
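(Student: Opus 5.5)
Induction on $p$, integrating the boundary Jacobi inequality (Proposition \ref{prop:boundary-jacobi}) against powers of $b$ and using $\varphi=0$ on $\partial\Omega$ to discard boundary terms. Throughout, $\phi:=w-u>0$ in $\Omega$, $\phi=0$ on $\partial\Omega$, $\varphi=\phi^6$, and $b$ is as in \eqref{eq:b-def}.

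\emph{Base case $p=1$.} We need $\int_\Omega \Delta u\,dx\le C$. Since $\Delta u=\mathrm{div}(Du)$, this follows from the divergence theorem on a slightly larger ball, together with the interior gradient bound for convex functions, $\|Du\|_{L^\infty(B_{3/2})}\le C\|u\|_{L^\infty(B_2)}$. This is also how $\|u\|_{C^{0,1}}$ enters the constants of Proposition \ref{prop:boundary-jacobi}.

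\emph{Inductive step.} Multiply \eqref{eq:boundary-jacobi} by $(\varphi b)^{p-1}$ and integrate over $\Omega$. The key structural facts are:
\begin{itemize}
\item $F_{ij}$ is divergence free, $\partial_i F_{ij}=0$, so $\Delta_F v=\partial_i(F_{ij}v_j)$;
\item $F_{ij}u_{ij}=3f$.
\end{itemize}
Integrating by parts,
\[
\int (\varphi b)^{p-1}\Delta_F(\varphi b)=-(p-1)\int(\varphi b)^{p-2}|\nabla_F(\varphi b)|^2,
\]
with no boundary term because $\varphi b$ vanishes on $\partial\Omega$. Combined with $+\tfrac12\int(\varphi b)^{p-2}|\nabla_F(\varphi b)|^2$, the left side is $\le 0$ for $p\ge 2$. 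For $p=1$ the gradient term stays, but it is nonnegative and can be dropped.

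This gives
\[
6\int \phi^5 b\,\Delta_F\phi\,(\varphi b)^{p-1}\le C\int \Delta u\,(\varphi b)^{p-1}+\int |\Delta f|\,\varphi(\varphi b)^{p-1}.
\]
The $\Delta f$ term is handled by integrating by parts once more, moving one derivative onto $\varphi^p b^{p-1}$. This costs at most $\int|Df|\,(|D\varphi|\varphi^{p-1}b^{p-1}+\varphi^p b^{p-2}|Db|)$. Controlling the $|Db|$ piece is delicate; I would instead mollify the argument and use only $f\in C^{0,1}$ by absorbing $|Db|$ via the gradient term kept from the left side.

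The crucial term is $\Delta_F\phi=\Delta_F w-3f$. Since $w$ is only 3-convex, I would not rely on the sign of $\Delta_F w$. Instead, write
\[
\Delta_F w=\partial_i(F_{ij}w_j)
\]
and integrate by parts against $\phi^{6p-1}b^p$, giving
\[
\int \phi^{6p-1}b^p\,\Delta_F w=-\int F_{ij}w_j\,\partial_i(\phi^{6p-1}b^p).
\]
The resulting terms are:
\begin{itemize}
\item $\int F_{ij}w_j\phi_i\,\phi^{6p-2}b^p$, bounded using $|F_{ij}|\le C\sigma_2\le C(\Delta u)^2$;
\item a $Db$ term, absorbed by the good gradient term.
\end{itemize}
The $-3f$ part contributes $-18\int f\phi^{6p-1}b^p$, a good negative term of size $c\int (\Delta u)^p\phi^{6p-1}$. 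This term does not carry the weight $\varphi^{p-1}$ exactly, so I expect the bookkeeping in the powers of $\phi$ to need adjusting.

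\emph{Main obstacle.} The dangerous terms carry one more power of $\Delta u$ and one fewer power of $\varphi$ (through $F\sim\sigma_2$), and they must be reduced to $\int(\Delta u)^{p-1}\varphi^{p-2}$, i.e.\ the inductive hypothesis. The factorial growth $p!\,C^p$ comes precisely from each step gaining a factor $pC$. My first attempt would be to use the identity $\sum_i F_{ii}=(n-2)\sigma_2\le C(\Delta u)^2$ to bound these terms. If that loses too much, I would instead exploit $\sigma_1 F_{ii}\ge\sigma_3$-type Newton inequalities, as in \eqref{eq:sigma_1-dm}, to trade $\sigma_2$ against $\Delta u$ and $f$.
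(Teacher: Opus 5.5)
There is a genuine gap: your argument never produces a term that gains a power of $\Delta u$, and the term you call good has the wrong sign.

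After multiplying \eqref{eq:boundary-jacobi} by $(\varphi b)^{p-1}$, the right-hand side contains $C\int_\Omega\Delta u\,(\varphi b)^{p-1}$. This is comparable to $\int_\Omega(\Delta u)^p\varphi^{p-1}$, which is the very quantity you want to bound at step $p$, and nothing on the other side controls it.

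Your inequality is $6\int\phi^5b\,\Delta_F\phi\,(\varphi b)^{p-1}\le\cdots$, and the contribution $-18\int f\phi^{6p-1}b^p$ sits on its smaller side. Moved across, it becomes $+18\int f\phi^{6p-1}b^p$, a second bad term of the same order rather than a coercive one.

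The $\Delta_F w$ part cannot supply coercivity either. For $3$-convex $w$ you have $\Delta_F w\ge 0$ (G{\aa}rding), so its sign is in fact available. But there is no lower bound in terms of $\Delta u$. Integrating it by parts as you propose creates terms like $\int(\Delta u)^{p+2}\phi^{6p-2}$, which are of higher order and cannot be reduced to the inductive hypothesis. The tools you list at the end, $\sum_iF_{ii}\le C(\Delta u)^2$ and $\sigma_1F_{ii}\ge\sigma_3$, also only lose powers.

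The missing idea is the paper's. First, use $\Delta_Fw\ge0$ just to discard the barrier term: $6\phi^5b\,\Delta_F\phi\ge-18\phi^5bf\ge-C\Delta u$. Second, keep the gradient term. With multiplier $(\varphi b)^p$ its coefficient is $p-\tfrac12$; in your indexing it is $p-\tfrac32>0$ for $p\ge2$. The bad term is then handled through the divergence structure $\Delta u=\operatorname{div}(Du)$:
\[
\int_\Omega(\varphi b)^p\Delta u=-\int_\Omega D[(\varphi b)^p]\cdot Du\le\delta p\int_\Omega|D(\varphi b)|^2b^{p-2}\varphi^{p-1}+\frac p\delta\int_\Omega|Du|^2b^p\varphi^{p-1}.
\]
The first term is absorbed into $\int_\Omega|\nabla_F(\varphi b)|^2(\varphi b)^{p-1}$ using $F_{ii}\ge f_0/\Delta u$ and $\Delta u\le b$. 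In the second term $|Du|$ is bounded. Together with $b^{p+1}\le Cb^p\Delta u$, this gives $\int_\Omega b^{p+1}\varphi^p\le pC\int_\Omega b^p\varphi^{p-1}$, which is where $p!\,C^p$ comes from.

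Two smaller points:
\begin{itemize}
\item $u$ is only $3$-convex, so the gradient bound must come from Chou--Wang's interior estimate, not from convexity. For the base case alone, $\int\xi\Delta u=\int u\Delta\xi$ already suffices.
\item Your remark that for $p=1$ the nonnegative gradient term ``can be dropped'' is backwards, since that term sits on the larger side of the inequality.
\end{itemize}
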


\begin{proof}
    By the gradient estimate by Chou-Wang \cite[Theorem 3.2]{ChouWang2001}, we know that $\|u\|_{C^{0,1}(B_1)}$ can be bounded by a constant depending only on $n, f_0, \|f\|_{C^{0,1}(B_2)}$, and $\|u\|_{L^\infty(B_2)}$.
    Hence, in the proof below, we may use $\|u\|_{C^{0,1}(B_1)}$, and at the end replace this dependence by the above quantities. We denote constant $C = C(n, f_0, \|f\|_{C^{0,1}(B_2)}, \|u\|_{C^{0,1}(B_1)}, \|w\|_{C^{0,1}(B_2)})$ which may change from line to line. 
    Write $\varphi = \phi^6$, where $\phi = w-u \in C^2(\Omega)$. Then 
    \begin{equation}\label{eq:DeltaF-psi-lower}
            \Delta_F\phi=\Delta_Fw-\Delta_Fu\ge -3f,
    \end{equation}
    where we use $\Delta_F w >0$ since $w$ is $3$-convex, and $\Delta_F u = 3f$. 
    Applying Proposition \ref{prop:boundary-jacobi}  gives
    \begin{equation}\label{eq:J-integral-start}
        \Delta_F(\varphi b)+\frac12\frac{|\nabla_F(\varphi b)|^2}{\varphi b}
        \ge \varphi\Delta f-C \Delta u, 
    \end{equation}
    since the term $6\phi^{5}b\Delta_F\phi$ in \eqref{eq:boundary-jacobi} can be  absorbed into $-C \Delta u$ using \eqref{eq:DeltaF-psi-lower} and $b\le C\Delta u$.
    
    For any integer $p\geq 1$, multiply \eqref{eq:J-integral-start} by $(\varphi b)^p$ and integrate over $\Omega$.  Since $\varphi=0$ on $\partial\Omega$, integration by parts gives
    \begin{equation}
    \label{eq:gradient-main-integral}
        \left(p-\frac12\right)
        \int_\Omega |\nabla_F(\varphi b)|^2(\varphi b)^{p-1}\dd x
        \le
        C\int_\Omega b^p\varphi^p \Delta u\dd x
        -\int_\Omega b^p\varphi^{p+1}\Delta f\dd x.
    \end{equation}
    We choose a small constant $0<\delta<1$ and integrate by parts to estimate that
    \begin{eqnarray}
         \int_{\Omega} b^p \varphi^p \Delta u \,dx &= &-\int_{\Omega} D[(\varphi b)^p] Du \,dx\notag  \\
         &\leq & \delta p\int_{\Omega} |D(\varphi b)|^2 b^{p-2}\varphi^{p-1} \,dx +\frac p{\delta} \int_{\Omega}|Du|^2 b^{p} \varphi^{p-1} \,dx, \label{eq:recursion_ahead}
    \end{eqnarray}
    and
    \begin{eqnarray}
        -\int_{\Omega}\Delta f \varphi^{p+1} b^{p} dx &=& \int_{\Omega}\varphi Df D[(\varphi b)^{p}]\, dx + \int_{\Omega} (\varphi b)^{p}Df D\varphi \,dx \notag \\ 
        & \leq & \delta p\int_{\Omega} |D(\varphi b)|^2 b^{p-2} \varphi^{p-1} dx
        +\frac{pC}{\delta} \int_{\Omega} b^{p}\varphi ^{p-1}dx .\label{eq:temp_Delta_f}
    \end{eqnarray}
    Notice that $F_{ii}\,\Delta u = \sigma_2(\lambda|i) \sigma_1 \geq \sigma_3 \geq f_0 $ for each $1\le i\le n$, thus
    \begin{equation}
        \label{eq:nabla_F_to_D}
        |\nabla_F(\varphi b)|^2(\varphi b)^{p-1}\ge |\nabla_F(\varphi b)|^2\Delta u \varphi^{p-1} b^{p-2} \geq f_0 |D(\varphi b)|^2 b^{p-2}\varphi^{p-1} .
    \end{equation}
    Substituting inequality \eqref{eq:nabla_F_to_D} into \eqref{eq:recursion_ahead} and \eqref{eq:temp_Delta_f}, and then inserting the resulting inequality into \eqref{eq:gradient-main-integral}, we obtain
    \begin{equation*}
        \left(p- \frac{1}{2}\right) \int_{\Omega}|\nabla_F(\varphi b)|^2[\varphi b]^{p-1} dx \leq C\delta p \int_{\Omega}|\nabla_F(\varphi b)|^2[\varphi b]^{p-1}\,dx + \frac{pC}{\delta} \int_{\Omega} b^p \varphi^{p-1}\,dx.
    \end{equation*}
    Let $\delta$ sufficiently small, recall $p\ge 1$, we get 
    \begin{equation*}
        \int_{\Omega}|\nabla_F(\varphi b)|^2[\varphi b]^{p-1} dx \leq C \int_{\Omega} b^p \varphi^{p-1} \,dx.
    \end{equation*}
    Combining \eqref{eq:recursion_ahead}, we can get a recursion formula
    \begin{eqnarray*}
        \int_{\Omega} b^{p+1} \varphi^p \,dx \leq C \int_{\Omega} b^p \varphi^p \Delta u \,dx & \leq& \delta p\int_{\Omega} |D[\varphi b]|^2 b^{p-2}\varphi^{p-1} dx +\frac p{\delta} \int_{\Omega}|Du|^2 b^{p} \varphi^{p-1} dx \\
        & \leq & C\delta p \int _{\Omega}|\nabla_F(\varphi b)|^2[\varphi b]^{p-1} \,dx + pC \int_{\Omega} b^p \varphi^{p-1} \,dx \\
        & \leq & p C \int_{\Omega} b^p \varphi^{p-1} \,dx.
    \end{eqnarray*}
    Therefore,
    $$
    \int_{\Omega} b^{p+1}\varphi ^pdx \le pC \int_{\Omega} b^{p}\varphi ^{p-1}dx \leq \cdots 
    \leq p! C^p \int_{\Omega} b^2 \varphi \,dx 
    \leq p! C^p \int_{\Omega} b \,dx \leq p! C^p \int_{\Omega} \Delta u \,dx.
    $$
   Now we choose a cutoff function $\xi \in C_0^{\infty}(B_2),\xi =1 $ in $B_1$, $|D\xi|<C(n)$, then 
    \begin{eqnarray*}
        \int_\Omega \Delta u \,dx \leq \int_{B_2} \xi \operatorname{div}(Du) \, dx \leq |\int_{B_2} {D\xi \cdot Du} \,dx| \leq C.
    \end{eqnarray*}
\end{proof}

\begin{proposition}[Pogorelov-type $C^{1,1}$ estimate]
\label{prop:pointwise-pogorelov-three}
Let \(u\) be a smooth \(3\)-convex solution of equation \eqref{eq:smooth-sigma3} in $B_2$ with $f\in C^\infty(B_2)$ and $f \geq f_0 >0$.  Let $\Omega\subset B_1$ be a connected open set, and let \(\varphi\in C^2(B_2)\) satisfy \(\varphi\ge 0\) in \(B_2\) and \(\varphi=0\) on \(B_2 \setminus\Omega\).
Then
\[
        \bigl\|\varphi^{3n}\Delta u\bigr\|_{L^\infty(\Omega)}
        \le
        C\int_\Omega \varphi^{2n}(\Delta u)^{2n+1}\,dx,
\]
where $C=C (n,f_0,\|f\|_{C^{0,1}(B_2)}, \|\varphi\|_{C^{0,1}(\Omega)} )$. 
\end{proposition}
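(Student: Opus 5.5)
The plan is a De Giorgi--Moser iteration, modeled on the $\sigma_2$ argument in \cite{ChenJianTuZhou}, built on the boundary Jacobi inequality, Proposition \ref{prop:boundary-jacobi}. Apply that proposition with $\phi:=\varphi^{n/2}$ (so $\phi^6=\varphi^{3n}$); this is $C^2$ on $\{\varphi>0\}$ since $n\ge 3$. Set $h:=\varphi^{3n}b$. By \eqref{eq:b-comparable}, $h$ is comparable to $\varphi^{3n}\Delta u$. The inequality \eqref{eq:boundary-jacobi} gives, on $\{\varphi>0\}$,
\[
\Delta_F h+\tfrac12\frac{|\nabla_F h|^2}{h}\ge 6\phi^5 b\,\Delta_F\phi+\varphi^{3n}\Delta f-C\Delta u .
\]
Since $\Delta_F\phi$ involves $D^2\varphi$, the right side is bounded below by $-C\sigma_2\,\varphi^{3n-1}\Delta u$ up to lower-order terms. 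The $\Delta f$ term I keep in divergence form and integrate by parts against $Df$, so only $\|f\|_{C^{0,1}}$ enters.

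Next I convert to a divergence-form subsolution. The quantity $h^{3/2}$ satisfies $\Delta_F h^{3/2}=\tfrac32 h^{1/2}\bigl(\Delta_F h+\tfrac12 |\nabla_F h|^2/h\bigr)$, so $v:=h^{3/2}$ obeys $\Delta_F v\ge -C\,h^{1/2}\,(\text{lower order})$. Because $\sum_i\partial_i F_{ij}=0$, we have $\Delta_F v=\partial_i(F_{ij}v_j)$, and $v$ is a subsolution of a divergence-form equation with coefficients $F_{ij}$. I then multiply by $v^{q}\eta^2$ (or simply $v^q$, since $h$ vanishes off $\Omega$) and integrate by parts. The gradient terms are handled as in Proposition \ref{prop:integral}: the lower bound $F_{ii}\Delta u\ge f_0$ gives
\[
|\nabla_F v|^2\ge f_0\,\frac{|Dv|^2}{\Delta u},
\]
and the upper bound $F_{ii}\le C\sigma_2\le C(\Delta u)^2$ controls the remaining terms. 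The result is a weighted Sobolev/Caccioppoli inequality
\[
\int |D(v^{(q+1)/2})|^2\,(\Delta u)^{-1}\le C q\int v^{q}\,(\Delta u)^{a}\varphi^{-b'}\,(\ldots).
\]
Combining this with Hölder and the Sobolev inequality in $\R^n$ gives a reverse Hölder step $\|v\|_{L^{\chi q}}\lesssim (Cq)^{1/q}\|v\|_{L^q(\text{weighted})}$. The degenerate weight $(\Delta u)^{-1}$ is paid for by integrability of $\Delta u$ against powers of $\varphi$, and this is exactly why the final bound is the integral $\int\varphi^{2n}(\Delta u)^{2n+1}$. The choice $2n+1$ is an exponent $p>n$, so that Hölder with exponents $\tfrac{p}{p-1},p$ applied to $n$-dimensional Sobolev still gains integrability ($\chi>1$). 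The powers of $\varphi$ are bookkept by using $\varphi^{3n}$ in $h$, so that every stray $\varphi^{-1}$ from derivatives of $\phi$ is absorbed.

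Iterating over $q_k=q_0\chi^k$ then yields $\sup h\le C\,\|h\|_{L^{q_0}}$-type control by $\int\varphi^{2n}(\Delta u)^{2n+1}$; any leftover power of $\sup h$ is absorbed by Young's inequality. I expect the main obstacle to be the Sobolev step with the degenerate, solution-dependent ellipticity. The operator $\Delta_F$ is only bounded below by $f_0/\Delta u$, so obtaining $\chi>1$ requires Hölder against $\Delta u^{p}$ with $p>n$. Simultaneously, the term $-C\Delta u$ and the $\Delta_F\phi$ term (of size $\sigma_2\varphi^{-1}h$) must be dominated by the good term $\tfrac12|\nabla_F h|^2/h$ and by the powers of $\varphi$ exactly matching $2n$ and $3n$. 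I would first check the homogeneity count by assigning $\Delta u$ weight $1$ and $\varphi$ weight $-1/3$, so that $\varphi^{3n}\Delta u$ is scale-neutral, and then choose the Hölder exponents to close the iteration.
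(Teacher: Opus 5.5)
Your proposal has two genuine gaps: one in the inequality you start from, and one in the iteration itself.

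\textbf{The starting inequality.} Routing the argument through Proposition \ref{prop:boundary-jacobi} is unnecessary here, and it costs you the stated constant dependence. With $\phi=\varphi^{n/2}$, the term $6\phi^5b\,\Delta_F\phi$ contains $3n\varphi^{3n-1}b\,F_{ij}\varphi_{ij}$. You bound this pointwise, which uses $\|D^2\varphi\|_{L^\infty}$. Likewise $b=\Delta u+A+e^{|Du|^2}$ and the constant in \eqref{eq:boundary-jacobi} bring in $\|u\|_{C^{0,1}}$. The statement allows only $\|\varphi\|_{C^{0,1}(\Omega)}$, and this is not cosmetic. In Proposition \ref{prop:smooth-pogorelov} the result is applied with $\varphi=((w-u)^+)^6$, whose Hessian contains $D^2u$, so a constant depending on $\|\varphi\|_{C^2}$ would be circular. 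For that particular $\varphi$ you could rescue the term via $\Delta_F(w-u)\ge-3f$, as in Proposition \ref{prop:integral}, but not for the general $\varphi$ of the statement. The paper avoids the issue by not using the Jacobi inequality at all. It uses Lemma \ref{lem:subharmonicity}, tested against $(\Delta u)^p\varphi^q\xi^2$ with a single integration by parts (using $\partial_iF_{ij}=0$, and moving $\Delta f$ onto $Df$), together with $F\ge c(\Delta u)^{-1}I$ and $F\le C(\Delta u)^2I$. This gives
\[
\int|D\Delta u|^2(\Delta u)^{p-2}\varphi^q\xi^2\,dx\le\frac{C(p+q)^2}{(R-r)^2}\int(\Delta u)^{p+3}\varphi^{q-2}\,dx,
\]
in which only $D\varphi$ and $Df$ appear.

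\textbf{The iteration.} This is the whole content of the proposition, and you leave it open; the scheme you sketch does not close as described. Each Caccioppoli step costs a factor $(\Delta u)^3\varphi^{-2}$: one power of $\Delta u$ from the lower ellipticity bound, two from the upper bound, and two powers of $\varphi$ from differentiating the weight. A fixed-weight, Trudinger-type scheme would therefore need $\Delta u\in L^s(\Omega)$ with $\frac1s+\frac2s<\frac2n$, i.e. $s>\frac{3n}{2}$. No such unweighted bound is available; you only have $\int\varphi^{2n}(\Delta u)^{2n+1}$. So your heuristic ``$2n+1>n$'' is not the operative mechanism.

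Absorbing the stray $\varphi^{-2}$ into the $\varphi^{3n}$ inside $h$ changes the ratio of the $\varphi$-exponent to the $\Delta u$-exponent. After one step you are no longer integrating a power of a single function $v$, so a single-sequence iteration $q_k=q_0\chi^k$ against a fixed weight cannot be set up.

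The missing idea is to let both exponents evolve. With $\gamma=\frac{n}{n-2}$, start from $p_0=\gamma^{k_0}$, $q_0=3np_0$, and run
\[
p_k=\gamma^{-1}p_{k-1}+3,\qquad q_k=\gamma^{-1}q_{k-1}-2
\]
backwards through the Sobolev inequality. Since $\sum_i\gamma^{-i}=\frac n2$, the terminal exponents satisfy $p_{k_0}\le1+\frac{3n}{2}\le2n+1$ and $q_{k_0}\ge3n-n=2n$. This is where $3n$, $2n$ and $2n+1$ come from. Taking the $1/p_0$ power makes the bound linear in $\int\varphi^{2n}(\Delta u)^{2n+1}$, with constants uniform in $k_0$ and no Young absorption needed. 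One then lets $k_0\to\infty$.
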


\begin{proof}
In this proof, we denote $C=C (n,f_0,\|f\|_{C^{0,1}(B_2)}, \|\varphi\|_{C^{0,1}(\Omega)} )$ which may change from line to line. 
The elementary cone estimates for \(\Gamma_3\), together with
\(\sigma_3(D^2u)=f\ge f_0\), imply
\begin{equation}\label{eq:prop42-ellipticity-deltau}
        F^{ij}\zeta_i\zeta_j
        \ge \frac{c}{\Delta u}|\zeta|^2,
        \qquad
        F^{ij}\zeta_i\zeta_j
        \le C(\Delta u)^2|\zeta|^2,
        \qquad
        \Delta u\ge c
\end{equation}
for every \(\zeta\in\mathbb R^n\). 

Let \(1\le r<R\le2\).  Choose
\(\xi\in C_0^\infty(B_R)\) such that
\[
    0\le\xi\le1, \qquad \xi=1\quad \text{in }B_r,  \qquad |D\xi|\le \frac{2}{R-r} .
\] 
Fix $p\ge 1,q\ge 2$. According to Lemma \ref{lem:subharmonicity}, we have 
$$  \int_{B_R}(\Delta u)^p\varphi^{q}\xi^2\Delta_F\Delta u \, dx \ge\int_{B_R}(\Delta u)^p \varphi^{q}\xi^2 [{\Delta f}-C |Df|^2] dx.  $$
Integrating by parts, using \eqref{eq:prop42-ellipticity-deltau} and Young's inequality repeatedly, we get
\begin{equation*}
    \int_{B_R} |D\Delta u|^2(\Delta u)^{p-2}\varphi^q\xi^2\,dx \le  \frac{C(p+q)^2}{(R-r)^2} \int_{B_R}(\Delta u)^{p+3}\varphi^{q-2}\,dx .
\end{equation*}
Then we have
\begin{align*}
    \int_{B_r}|D[(\Delta u)^{\frac p2}\varphi^{\frac q2}]|^2dx
    \le \frac{C (p+q)^4}{(R-r)^2}\int_{B_{R}}(\Delta u)^{p+3}\varphi^{q-2}\,dx,
\end{align*}
and
\begin{align*}
    \int_{B_{r}}|(\Delta u)^{\frac p2}\varphi^{\frac q2}|^2 dx
    \le \frac{C (p+q)^4}{(R-r)^2}\int_{B_{R}}(\Delta u)^{p+3}\varphi^{q-2}\,dx .
\end{align*}
These two estimates give $(\Delta u)^{\frac p2}\varphi^{\frac q2} \in W^{1,2}(B_{r})$. 
Since $n\geq 3$, we have $ W^{1,2}(B_r) \hookrightarrow L^{\frac{2n}{n-2}}(B_r) $ by Sobolev embedding theorem, and therefore
\begin{equation}\label{eq:prop42-sobolev-step-deltau}
    \left(
    \int_{B_r}
    (\Delta u)^{\gamma p}\varphi^{\gamma q}\,dx
    \right)^{1/\gamma}
    \le
    \frac{C(p+q)^4}{(R-r)^2}
    \int_{B_R}(\Delta u)^{p+3}\varphi^{q-2}\,dx ,
\end{equation}
where $\gamma = \frac{n}{n-2}$.
Fix an integer $k_0\ge\frac{\ln 2n}{\ln\gamma}$ and take $p_0=\gamma^{k_0}, q_0 = 3np_0$ to initiate the iteration. For $k = 1,\cdots,k_0$, let
\begin{eqnarray*}
    p_{k} &=& \gamma^{-1} p_{k-1}+3=\gamma^{-k}p_0+3\sum^{k-1}_{i=0}\gamma^{-i}, \\
    q_{k} &=& \gamma^{-1} q_{k-1}-2=\gamma^{-k}q_0-2\sum^{k-1}_{i=0}\gamma^{-i}, \\
    r_k &=& 1 + \sum^{k}_{i=1}2^{-(k_0-i+2)}.
\end{eqnarray*}
Since $\sum_{\ell=0}^{\infty}\gamma^{-\ell}=\frac{n}{2}$, we have
\begin{equation}\label{eq:prop42-terminal-pq-deltau}
        p_{k_0}\le 1+\frac{3n}{2}\leq 2n+1,
        \qquad
        q_{k_0}\ge 3n-n=2n .
\end{equation}
Furthermore,
\begin{equation}\label{eq:prop42-pq-growth-deltau}
        p_k+q_k\le C(n)\gamma^{k_0-k}
        \qquad (1\le k\le k_0).
\end{equation}
Apply \eqref{eq:prop42-sobolev-step-deltau} with
\[ r=r_{k-1}, \qquad R=r_k, \qquad p=p_k-3, \qquad q=q_k+2 . \]
Define
\[
    I_k:=\int_{B_{r_k}} (\Delta u)^{p_k}\varphi^{q_k}\,dx .
\]
Then \eqref{eq:prop42-sobolev-step-deltau} becomes
\begin{equation*}
        I_{k-1} \le \left[ C2^{2(k_0-k+2)}(p_k+q_k)^4 I_k \right]^\gamma .
\end{equation*}
Therefore, an iteration process implies
\begin{eqnarray}
    I_0 \le
        C^{\sum_{k=1}^{k_0}\gamma^k}
        2^{2\sum_{k=1}^{k_0}(k_0-k+2)\gamma^k}
        \prod_{k=1}^{k_0}(p_k+q_k)^{4\gamma^k}
        I_{k_0}^{\gamma^{k_0}} .
        \label{eq:prop42-iteration-product-deltau}
\end{eqnarray}
Using \eqref{eq:prop42-pq-growth-deltau}, 
\[
        \prod_{k=1}^{k_0}(p_k+q_k)^{4\gamma^k}
        \le
        C^{\sum_{k=1}^{k_0}\gamma^k}
        \gamma^{4\sum_{k=1}^{k_0}(k_0-k)\gamma^k} .
\]
Since \(p_0=\gamma^{k_0}\),
\[
        \frac1{p_0}\sum_{k=1}^{k_0}\gamma^k\le C(n),
        \qquad
        \frac1{p_0}\sum_{k=1}^{k_0}(k_0-k+2)\gamma^k\le C(n),
\]
and hence taking the power \(1/p_0\) in \eqref{eq:prop42-iteration-product-deltau} yields 
\begin{align*}
    \|\varphi^{3n}\Delta u\|_{L^{p_0} (B_{1})} = \left[\int_{B_{1}}(\Delta u)^{p_0}\varphi^{q_{0}}dx\right]^{1/p_0} = I_0^{1/p_0} \leq C  \int_{B_{2}}(\Delta u)^{p_{k_0}}\varphi^{q_{k_0}}dx.
\end{align*}
Recalling \eqref{eq:prop42-terminal-pq-deltau}, the above estimate implies 
$$ \|\varphi^{3n}\Delta u\|_{L^{p_0} (B_{1})} \leq C \int_{B_{2}}(\Delta u)^{2n+1}\varphi^{2n}dx. $$
Let $k_0\rightarrow +\infty$, that is $p_0 \to +\infty$, we complete the proof of the proposition.
\end{proof}

Proposition \ref{prop:smooth-pogorelov} follows immediately from Proposition \ref{prop:pointwise-pogorelov-three} and Proposition \ref{prop:integral} with $p=2n+1$.

\begin{proposition}
\label{prop:smooth-pogorelov}
Let $u$ be a smooth $3$-convex solution of \eqref{eq:smooth-sigma3} in $B_2$ with $f\in C^\infty(B_2)$ and $f \geq f_0 >0$. Let  $w \in C^2(\mathbb{R}^n)$ be a $3$-convex function, and let $\Omega$ be a connected component of $\{ w >u \}$ such that $\overline\Omega \subset B_1$.  Define $$ \varphi(x):=\bigl((w(x)-u(x))^+\bigr)^6,\qquad x\in B_2. $$
Then 
$$ \| \varphi^{3n} \Delta u\|_{L^\infty(\Omega)} \leq C(n,f_0,\|w\|_{C^{0,1}(B_2)},\|u\|_{C^{0,1}(B_2)},\|f\|_{C^{0,1}(B_2)}). $$
\end{proposition}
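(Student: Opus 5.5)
The plan is to combine Proposition \ref{prop:pointwise-pogorelov-three} and Proposition \ref{prop:integral}, as the paper indicates, but several hypotheses have to be checked carefully. Set $\phi:=(w-u)^+$ and $\varphi=\phi^6$. Since $\overline\Omega\subset B_1$ is a connected component of $\{w>u\}$, we have $w>u$ in $\Omega$ and $w=u$ on $\partial\Omega$. So $w$ and $\Omega$ satisfy the hypotheses of Proposition \ref{prop:integral}, except that $\varphi$ there is $(w-u)^6$ on $\Omega$ only.

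For Proposition \ref{prop:pointwise-pogorelov-three} we need a function in $C^2(B_2)$, nonnegative, and vanishing off $\Omega$. The global $((w-u)^+)^6$ may be positive on other components of $\{w>u\}$. Hence I would instead use
\[
\tilde\varphi:=\varphi\,\mathbf 1_{\Omega}.
\]
Because $w-u=0$ on $\partial\Omega$ and the sixth power vanishes to order $6$, $\tilde\varphi$ is $C^2$ across $\partial\Omega$. Its derivatives up to order two are $O((w-u)^4)$ and tend to $0$ at $\partial\Omega$. This uses that $u$ and $w$ are $C^2$ near $\overline\Omega\subset B_1$. Extending by zero gives $\tilde\varphi\in C^2(B_2)$ with $\tilde\varphi=0$ on $B_2\setminus\Omega$.

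Its Lipschitz norm is controlled by $6\|w-u\|_{L^\infty(\Omega)}^5\,\|D(w-u)\|_{L^\infty(\Omega)}$, hence by $\|w\|_{C^{0,1}(B_2)}$ and $\|u\|_{C^{0,1}(B_2)}$. Proposition \ref{prop:pointwise-pogorelov-three} then gives
\[
\|\varphi^{3n}\Delta u\|_{L^\infty(\Omega)}\le C\int_\Omega \varphi^{2n}(\Delta u)^{2n+1}\,dx .
\]

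Next I apply Proposition \ref{prop:integral} with $p=2n+1$. This gives $\int_\Omega(\Delta u)^{2n+1}\varphi^{2n}\,dx\le (2n+1)!\,C^{2n+1}$, with $C$ depending on $n$, $f_0$, $\|f\|_{C^{0,1}}$, $\|u\|_{L^\infty}$ and $\|w\|_{C^{0,1}}$. Combining the two bounds yields the claim, with the constant depending on the listed quantities, since $\|u\|_{L^\infty(B_2)}\le\|u\|_{C^{0,1}(B_2)}$.

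The main point needing care is whether the constant in Proposition \ref{prop:pointwise-pogorelov-three} secretly depends on more than $\|\varphi\|_{C^{0,1}}$. In its proof, integration by parts of $\Delta_F\Delta u$ against $\varphi^q$ produces terms like $F^{ij}\varphi_i(\Delta u)_j$. These are handled by Young's inequality using only $|D\varphi|$, via the bound $F^{ij}\zeta_i\zeta_j\le C(\Delta u)^2|\zeta|^2$. Second derivatives of $\varphi$ never appear if one integrates by parts only once, so the $C^{0,1}$ dependence is legitimate.

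A smaller issue is that the iteration in that proof works on the balls $B_r\subset B_2$ rather than on $\Omega$. Since $\tilde\varphi$ is supported in $\overline\Omega\subset B_1$, the integrals over $B_R$ reduce to integrals over $\Omega$, and the $L^\infty$ bound holds on $\Omega$.
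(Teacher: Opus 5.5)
Your proposal is correct and follows the paper's own argument: apply Proposition \ref{prop:pointwise-pogorelov-three}, then bound the resulting integral by Proposition \ref{prop:integral} with $p=2n+1$. Replacing $\varphi$ by $\varphi\,\mathbf{1}_\Omega$ is a useful precision that the paper leaves implicit. It is needed because $((w-u)^+)^6$ need not vanish on other components of $\{w>u\}$, and the product is still $C^2$ since $w-u$ vanishes on $\partial\Omega$.
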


\section{Interior \texorpdfstring{$C^{2}$}{} regularity}
\label{section:C2-regularity}
In this section we prove Theorem \ref{thm:main}. Assume that $u$ is a convex viscosity solution to 
\begin{equation*}
    \sigma_{3}(D^2 u) = f(x), \quad x\in B_2,
\end{equation*}
with $f\in C^{0,1}(B_2)$, and $\inf_{B_2} f \geq f_0>0$. 
The proof consists of two main parts:
\begin{enumerate}
    \item Construct a sequence of smooth approximating solutions $\{v_\ell\}$ such that $v_\ell \to u$ uniformly;
    \item Prove the uniform $C^{1,1}$ estimates for $\{v_\ell\}$.
\end{enumerate}

\subsection{Approximating solutions and convergence}

Let $u$ be a convex viscosity solution of \eqref{eq:main-equation}.  Choose smooth approximations $f_\ell\in C^\infty(B_1)$ and $u_\ell\in C^\infty(\overline{B_1})$ such that
\begin{equation}\label{eq:approx-data}
        \|f_\ell - f\|_{L^\infty(B_1)} \leq \frac{f_0}{2\ell^2}, \qquad \|f_\ell\|_{C^{0,1}(B_1)} \leq C\|f\|_{C^{0,1}(B_2)}, \qquad
        \|u_\ell - u\|_{C^0(\overline{B_1})} \leq \frac{1}{\ell},
\end{equation}
for each $\ell\geq 1$. 
For every $\ell$, consider the Dirichlet problem 
\begin{equation}
\label{eq:Dirichlet-approx}
    \begin{cases}
    \sigma_3(D^2 v_\ell) = f_\ell & \text{in}\ B_1, \\
    v_\ell = u_\ell & \text{on}\ \partial B_1.
    \end{cases}
\end{equation}
By the classical result of Caffarelli--Nirenberg--Spruck \cite{CNS}, problem \eqref{eq:Dirichlet-approx} admits a unique smooth $3$-convex solution $v_\ell$. 

To obtain $C^0$ convergence of $v_\ell$ to $u$, we establish the following stability estimate. The argument is standard, see for instance Ishii--Lions \cite{Ishii_LionsViscositysolu}, Trudinger \cite{trudinger1990dirichlet} and Lu--Tsai \cite{LusiyuanPogorelovQuotient}.

\begin{prop}
\label{prop:stability}
    Let $u$ be a convex viscosity solution of $\sigma_3(D^2u) = f$ in $B_1$ and let $v$ be a smooth $3$-convex solution of $\sigma_3(D^2 v) = g$ in $B_1$. Assume $f\in C^0(B_1)$, $f\geq f_0>0$ and $ \|f-g\|_{L^\infty(B_1)} \leq \frac{1}{2}f_0$. Then
    \begin{equation}
    \label{eq:stability}
        \|u-v\|_{C^0(\overline{B_1})} \leq \sup_{\partial B_1}|u-v|+C(n) f_0^{-1/6} \|f-g\|_{L^\infty(B_1)}^{1/2}.
    \end{equation}
\end{prop}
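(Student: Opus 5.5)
We will compare $u$ with perturbations of $v$ built from a quadratic, and use the comparison principle for viscosity solutions of $\sigma_3(D^2\cdot)=\cdot$ on $\Gamma_3$. Set $M:=\sup_{\partial B_1}|u-v|$ and $\eta:=\|f-g\|_{L^\infty(B_1)}$, and let $q(x):=\tfrac12(|x|^2-1)$, so that $q\le 0$ in $B_1$, $q=0$ on $\partial B_1$, and $D^2q=I$. The aim is to show
\[
v+t\,q-M\le u\le v-t\,q+M\quad\text{in }B_1
\]
for a suitable $t=t(n,f_0,\eta)$. Since $|q|\le \tfrac12$, this gives $|u-v|\le M+t/2$.

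\textbf{Lower bound $v+tq-M\le u$.} The function $\underline w:=v+tq-M$ is smooth, and $D^2\underline w=D^2v+tI$ with $D^2v\in\Gamma_3$. Because $\lambda\mapsto\sigma_3(\lambda+te)$ expands with nonnegative coefficients on $\Gamma_3$, we get
\[
\sigma_3(D^2v+tI)\ge \sigma_3(D^2v)+t^2\,\sigma_1(D^2v)\tfrac{(n-2)}{1}\cdot c(n)\ge g+c(n)\,t^2 g^{1/3}.
\]
Here I keep only the $t^2$ term, i.e.\ $\sum_i\sigma_1(\lambda|ij)$-type contributions, which is bounded below by $c(n)t^2\sigma_1\ge c(n)t^2 g^{1/3}$ by Maclaurin. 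Since $g\ge f_0/2$, choosing $t^2=C(n)f_0^{-1/3}\eta$ gives $\sigma_3(D^2\underline w)\ge g+\eta\ge f$. Thus $\underline w$ is a strict classical subsolution with $\underline w\le u$ on $\partial B_1$, and the comparison principle for viscosity solutions yields $\underline w\le u$. This choice gives $t=C(n)f_0^{-1/6}\eta^{1/2}$, which matches the stated rate.

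\textbf{Upper bound $u\le v-tq+M$.} This direction is harder, because $D^2v-tI$ may leave $\Gamma_3$. I would instead swap roles and perturb $u$, which is convex. Set $\bar w:=u+tq-M$. It is convex with $D^2\bar w\ge tI$ in the viscosity sense, so at any touching point with a smooth test function $\psi$ from above, $D^2\psi\ge D^2P+tI$, where $P$ is a test function for $u$. The same expansion then gives $\sigma_3\ge f+c\,t^2f^{1/3}\ge g$. Hence $\bar w$ is a viscosity subsolution of $\sigma_3=g$ with $\bar w\le v$ on $\partial B_1$, and comparison against the smooth $3$-convex solution $v$ gives $\bar w\le v$. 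A cleaner route is to use strict inequality, i.e.\ take $t$ slightly larger so that the subsolution is strict, and then touch $v$ from below by $\bar w+\text{const}$ at an interior maximum of $\bar w-v$. At that point $v$ serves as a smooth test function touching $\bar w$ from above. The subsolution property gives $\sigma_3(D^2v)>g$ there, provided the ellipticity holds, i.e.\ $D^2v\in\Gamma_3$ and comparing in the cone. This contradicts $\sigma_3(D^2v)=g$.

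\textbf{Main obstacle.} The expected obstacle is making the viscosity-sense step rigorous. The points that need care are:
\begin{itemize}
\item checking that the viscosity subsolution property of $u$ transfers to $u+tq$ with the $t^2$ gain, using that $\sigma_3$ is monotone on $\Gamma_3$ and that test functions from above for a convex $u$ have Hessians in $\Gamma_3$;
\item handling the strictness, by replacing $t$ with $(1+\epsilon)t$ and letting $\epsilon\to0$.
\end{itemize}
Combining the two bounds gives \eqref{eq:stability}.
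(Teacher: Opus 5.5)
Your proposal is correct and follows the paper's argument. For one inequality you perturb the smooth solution $v$ by a multiple of $|x|^2$ to get a strict classical subsolution that the viscosity supersolution $u$ cannot touch from below. For the other you perturb the convex $u$, using that test functions touching a convex function from above have nonnegative Hessian so the $t^2\sigma_1$ gain survives, to get a strict viscosity subsolution that the smooth $v$ cannot touch from above. This is exactly the paper's proof, with $\eps|x|^2$ in place of your $t\,q$.

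The only detail to fix is strictness when $\|f-g\|_{L^\infty}=0$: there $t=0$, so replacing $t$ by $(1+\epsilon)t$ does nothing. Use $t+\epsilon$ instead and let $\epsilon\to 0$, as the paper does.
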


\begin{proof}
    Put $a:=\|f-g\|_{L^\infty(B_1)}$ and choose
    \[
        \eps:=A(n)f_0^{-1/6}a^{1/2},
    \]
    where $A(n)>1$ will be fixed below. If $a=0$, we take an arbitrary $\eps>0$ and let $\eps\to0$ at the end.

    Fix $x_0\in B_1$ and suppose $\varphi\in C^2(B_1)$ touches $u+\eps|x|^2$ from above at $x_0$, i.e.
    \[
        \varphi(x_0)=u(x_0)+\eps|x_0|^2,
        \qquad
        \varphi\ge u+\eps|x|^2 \quad \text{in }B_1.
    \]
    Then $\psi:=\varphi-\eps|x|^2$ touches $u$ from above at $x_0$. Since $u$ is convex, $D^2\psi(x_0)\ge0$. The subsolution condition gives
    \[
        \sigma_3(D^2\psi(x_0))\ge f(x_0).
    \]
    Using
    \[
        \sigma_3(M+2\eps I)=\sigma_3(M)+2(n-2)\eps\sigma_2(M)
        +2(n-1)(n-2)\eps^2\sigma_1(M)+8\binom n3\eps^3,
    \]
    and the Newton--Maclaurin inequality $\sigma_1(D^2\psi(x_0))\ge c(n)f_0^{1/3}$, we obtain
    \[
        \sigma_3(D^2\varphi(x_0)) = \sigma_3(D^2\psi(x_0) + 2\eps I)
        \ge f(x_0)+c(n)\eps^2 f_0^{1/3}.
    \]
    Choosing $A(n)$ sufficiently large, $c(n)\eps^2f_0^{1/3}\ge 2a$. Hence $u+\eps|x|^2$ is a viscosity subsolution of
    \[
        \sigma_3(D^2 w)=f+2a.
    \]

    Define
    \[
        w(x):=u(x)+\eps(|x|^2-1)-\sup_{\partial B_1}|u-v|.
    \]
    Then $w\le v$ on $\partial B_1$. We claim that $w\le v$ in $B_1$. If not, $\max_{B_1}(w-v)=c_0>0$ is attained at an interior point $y_0$, and
    \[
        (w-c_0)(y_0)=v(y_0),\qquad w-c_0\le v\quad\text{in }B_1.
    \]
    Thus $v$ touches $w-c_0$ from above at $y_0$. Since $w$ is a subsolution of $\sigma_3=f+2a$, we get
    \[
        g(y_0)=\sigma_3(D^2v(y_0))\ge f(y_0)+2a,
    \]
    contradicting $g\le f+a$. Therefore
    \begin{equation}\label{eq:upper_u-v}
        u-v\le \eps+\sup_{\partial B_1}|u-v|.
    \end{equation}

    Similarly, define
    \[
        \widetilde w(x):=v(x)+\eps(|x|^2-1)-\sup_{\partial B_1}|u-v|.
    \]
    We claim that $\widetilde w\le u$ in $B_1$. If not, $\widetilde w-u$ attains a positive maximum at an interior point $\widetilde y_0$, and for some $\widetilde c_0>0$,
    \[
        (\widetilde w-\widetilde c_0)(\widetilde y_0)=u(\widetilde y_0),\qquad
        \widetilde w-\widetilde c_0\le u\quad\text{in }B_1.
    \]
    Then $\widetilde w-\widetilde c_0$ touches $u$ from below at $\widetilde y_0$. Since $u$ is a viscosity supersolution of $\sigma_3=f$,
    \begin{equation}\label{eq:tilde-w}
        \sigma_3(D^2\widetilde w(\widetilde y_0))\le f(\widetilde y_0)
        \le g(\widetilde y_0)+a.
    \end{equation}
    On the other hand, $g\ge f-\frac12f_0\ge\frac12f_0$ and $D^2v\in\Gamma_3$, so $\sigma_1(D^2v)\ge c(n)f_0^{1/3}$. Therefore
    \[
        \sigma_3(D^2\widetilde w)=\sigma_3(D^2v+2\eps I)
        \ge g+c(n)\eps^2f_0^{1/3}
        \ge g+2a,
    \]
    which contradicts \eqref{eq:tilde-w}. Hence
    \begin{equation}\label{eq:lower_u-v}
        u-v\ge -\eps-\sup_{\partial B_1}|u-v|.
    \end{equation}
    Combining \eqref{eq:upper_u-v}, \eqref{eq:lower_u-v}, and the definition of $\eps$ gives \eqref{eq:stability}.
\end{proof}

Applying Proposition \ref{prop:stability} to $u$ and $v_\ell$ and using \eqref{eq:approx-data}, we obtain the following uniform convergence.  

\begin{corollary}
\label{coro:uniform_v_k}
    For all $\ell\geq 1$, 
    \begin{eqnarray*}
    \|v_\ell - u\|_{C^0(\overline{B_1})} \leq \|u_\ell - u\|_{C^0(\overline{B_1})} +  f_0^{-1/6} \|f_\ell - f\|_{L^\infty(B_1)}^{1/2} \leq \frac{C}{\ell},
\end{eqnarray*}
where $C$ depends only on $n$ and $f_0$. Consequently, $v_\ell \to u$ uniformly in $B_1$.
\end{corollary}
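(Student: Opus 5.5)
This is a direct application of Proposition \ref{prop:stability}, with $u$ as the viscosity solution and $v=v_\ell$, $g=f_\ell$. The plan is to check the hypotheses, feed in the approximation bounds \eqref{eq:approx-data}, and read off the rate $C/\ell$.

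First I verify the hypotheses of Proposition \ref{prop:stability} on $B_1$.
\begin{itemize}
\item $u$ is a convex viscosity solution of $\sigma_3(D^2u)=f$ in $B_2$, hence also in $B_1$.
\item $f\ge f_0>0$ and $f$ is continuous on $B_1$.
\item $v_\ell$ is the smooth $3$-convex solution of \eqref{eq:Dirichlet-approx} given by Caffarelli--Nirenberg--Spruck, with right-hand side $g=f_\ell$.
\item By \eqref{eq:approx-data}, $\|f_\ell-f\|_{L^\infty(B_1)}\le f_0/(2\ell^2)\le f_0/2$ for every $\ell\ge1$, so the smallness requirement holds.
\end{itemize}

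Applying \eqref{eq:stability} then gives
\[
\|v_\ell-u\|_{C^0(\overline{B_1})}\le \sup_{\partial B_1}|u-v_\ell| + C(n)f_0^{-1/6}\|f_\ell-f\|_{L^\infty(B_1)}^{1/2}.
\]
I treat the two terms separately.
\begin{itemize}
\item On $\partial B_1$ we have $v_\ell=u_\ell$, so the boundary term is at most $\|u_\ell-u\|_{C^0(\overline{B_1})}\le 1/\ell$.
\item For the second term, $\|f_\ell-f\|^{1/2}_{L^\infty(B_1)}\le (f_0/2)^{1/2}\ell^{-1}$. The product is therefore bounded by $C(n)f_0^{1/3}\ell^{-1}$.
\end{itemize}
Adding the two bounds gives $\|v_\ell-u\|_{C^0(\overline{B_1})}\le C(n,f_0)/\ell$, and uniform convergence follows by letting $\ell\to\infty$.

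The only delicate point is a technicality. The boundary comparison needs $u$ continuous up to $\partial B_1$, and \eqref{eq:stability} is stated with $C^0(\overline{B_1})$ norms. This is fine because $u\in C(B_2)$, so $u$ is continuous on $\overline{B_1}$.

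I also note that the first displayed inequality in the corollary omits the constant $C(n)$ in front of $f_0^{-1/6}$. It is harmless, since that constant is absorbed into the final $C=C(n,f_0)$.
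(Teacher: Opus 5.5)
Your proposal is correct and follows the paper's own argument: apply Proposition \ref{prop:stability} with $v=v_\ell$ and $g=f_\ell$, then use \eqref{eq:approx-data}. Your verification of the hypotheses, and the bounds $\sup_{\partial B_1}|u-v_\ell|\le 1/\ell$ and $C(n)f_0^{-1/6}\bigl(f_0/(2\ell^2)\bigr)^{1/2}=C(n)2^{-1/2}f_0^{1/3}\ell^{-1}$, are right. You are also right that the middle expression in the statement is missing the factor $C(n)$ from \eqref{eq:stability}; this is harmless, since it is absorbed into the final constant $C(n,f_0)$.
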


\subsection{Construction of 3-convex cutoff}

To apply Proposition \ref{prop:smooth-pogorelov} and obtain a uniform $C^{1,1}$ estimate for $v_\ell$, we need a $3$-convex barrier function $w$. 
However, the existence for such a barrier is not a common property for general convex viscosity solution to equation \eqref{eq:main-equation}.
So we impose an additional strict $3$-convex condition \eqref{eq:strict-k-convex}, and we will show in the next two lemmas that condition \eqref{eq:strict-k-convex} ensures the existence of a $3$-convex barrier function.

\begin{lemma}
    \label{lem:cutoff-pre}
    Let $u\in C(\overline{B_1})$ be a convex function satisfying $u(0)=0$ and $u\ge0$. Denote
    \[
        E:=\{x\in \overline{B_1}:u(x)=0\}.
    \]
    If $\dim E\le n-3$, then for every $r>0$ there exists $\eta=\eta(u,r)>0$ such that
    \[
        \{x\in \overline{B_1}:u(x)<\eta\}\subset E_r,
    \]
    where $E_r$ denotes the $r$-neighborhood of $E$.
\end{lemma}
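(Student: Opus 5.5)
This is a compactness argument, and the dimension hypothesis is never really used. We argue by contradiction. Suppose that for some $r>0$ no such $\eta$ exists. Then for every $j\ge1$ there is a point $x_j\in\overline{B_1}$ with
\[
u(x_j)<\frac1j,\qquad \operatorname{dist}(x_j,E)\ge r.
\]
Since $\overline{B_1}$ is compact, a subsequence satisfies $x_j\to x_\ast\in\overline{B_1}$. Continuity of $u$ on $\overline{B_1}$ gives $u(x_\ast)=\lim u(x_j)\le 0$. Combined with $u\ge0$, this yields $u(x_\ast)=0$, so $x_\ast\in E$. On the other hand, the distance function to $E$ is $1$-Lipschitz; note that $E$ is nonempty because $0\in E$. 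Hence
\[
\operatorname{dist}(x_\ast,E)=\lim_j \operatorname{dist}(x_j,E)\ge r>0,
\]
which contradicts $x_\ast\in E$. So for each $r>0$ some $\eta=\eta(u,r)>0$ works.

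Equivalently, one can argue directly. The set $K_r:=\overline{B_1}\setminus E_r$ is compact, being closed and bounded, because $E_r$ is open. The function $u$ is continuous and strictly positive on $K_r$, since its zero set $E$ is disjoint from $K_r$. So $\eta:=\min_{K_r}u>0$ when $K_r\neq\emptyset$, and any $\eta>0$ works otherwise. Then $\{u<\eta\}\cap\overline{B_1}\subset E_r$.

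The lemma therefore needs only continuity, nonnegativity, and compactness. Convexity and the assumption $\dim E\le n-3$ are not needed here. I expect those hypotheses to be stated for the next lemma, where $E$, which is convex as the zero set of a nonnegative convex function, is placed inside a thin neighborhood of a subspace of dimension at most $n-3$. There a $3$-convex function such as $|x''|^2-M|x'|^2$-type expressions, suitably adapted, can be built to separate $E_r$ from its complement.

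The only point needing care is the definition of $E_r$. If $E_r$ is read as the closed $r$-neighborhood, the same argument applies after replacing $r$ by $r/2$. The open neighborhood makes $K_r$ closed directly. I do not expect any real obstacle in this lemma.
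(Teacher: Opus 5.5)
Your proof is correct and is essentially the paper's: the same contradiction argument, extracting $x_j\to x_\ast\in E$ with $\operatorname{dist}(x_\ast,E)\ge r$. Your direct variant (take $\eta=\min_{\overline{B_1}\setminus E_r}u$) and your remark that convexity and $\dim E\le n-3$ are not used are both accurate; the paper's proof does not use them either.
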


\begin{proof}
    Suppose to the contrary. Then there exist $r_0>0$ and points $x_j\in \overline{B_1}$ such that $u(x_j)<1/j$ but $\operatorname{dist}(x_j,E)\ge r_0$. After passing to a subsequence, $x_j\to x_*\in \overline{B_1}$. Then $u(x_*)=0$, so $x_*\in E$. This contradicts
    \[
        \operatorname{dist}(x_*,E)=\lim_{j\to\infty}\operatorname{dist}(x_j,E)\ge r_0.
    \]
\end{proof}

\begin{lemma}
    \label{lem:barrier-construct}
    Let $u\in C(\overline{B_1})$ be a convex function satisfying the strict $3$-convexity condition \eqref{eq:strict-k-convex}. Then there exist a $3$-convex function $w\in C^\infty(\mathbb R^n)$, constants $\kappa,r>0$, and a connected open set $\Omega\subset B_{3/4}$ containing the origin such that
    \begin{itemize}
        \item[1.] $w>u$ in $\Omega$ and $w=u$ on $\partial\Omega$;
        \item[2.] $B_r\subset\Omega$ and $w-u\ge \kappa$ in $B_r$.
    \end{itemize}
\end{lemma}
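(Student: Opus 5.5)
We normalize first. Subtract the supporting affine function $L$ of $u$ at the origin; this does not affect $3$-convexity of barriers, since affine functions have zero Hessian. After this step we may assume $u(0)=0$ and $u\ge 0$ on $\overline{B_1}$. Set $E=\{u=0\}$. By \eqref{eq:strict-k-convex}, $E$ is a compact convex set of dimension at most $n-3$, so $E\subset V$ for some linear subspace $V$ of dimension $n-3$. After a rotation we take $V=\{x_1=x_2=x_3=0\}$.

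The barrier will be a function convex in $(x_1,x_2,x_3)$ plus a small perturbation. We try
\[
w(x)=\eta\Bigl(M(x_1^2+x_2^2+x_3^2)-\tfrac{1}{2}|x|^2\Bigr)+\tfrac{\eta}{2}\,\rho^2,
\]
or a similar quadratic, with $M$ large, $\rho$ small and $\eta$ small. Its Hessian has eigenvalues $\eta(2M-1)$ with multiplicity $3$ and $-\eta$ with multiplicity $n-3$. Then $\sigma_3$ and the lower $\sigma_j$ are positive once $M\ge M(n)$, because the three large eigenvalues dominate. So $w$ is a smooth $3$-convex quadratic on $\mathbb R^n$.

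The geometry is as follows. Near $0$ we have $u\ge 0$ and $w(0)=\eta\rho^2/2>0=u(0)$, so $w>u$ near the origin; we take $\kappa=\eta\rho^2/4$ and $r$ small so that $w-u\ge\kappa$ on $B_r$ by continuity. We then let $\Omega$ be the connected component of $\{w>u\}\cap B_{3/4}$ containing $0$. We must make sure $\overline\Omega$ does not reach $\partial B_{3/4}$, since we need $w=u$ on $\partial\Omega$.

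To arrange this, take a point $x$ with $|x|\ge 1/2$. Either $\operatorname{dist}(x,E)\ge r_0$, or $x$ lies near $E\subset V$ far from $0$. In the first case Lemma \ref{lem:cutoff-pre} gives $u(x)\ge\eta_0$, and we choose $\eta$ so small that $w\le\eta_0$ on $B_1$; hence $w<u$ there. In the second case $|x'|:=|(x_1,x_2,x_3)|\le r_0$ while $|x|\ge 1/2$. Then $w(x)\le\eta(Mr_0^2-\tfrac18+\tfrac{\rho^2}{2})<0\le u(x)$, provided $r_0\le (16M)^{-1/2}$ and $\rho$ is small; note that $r_0$ is chosen after $M$ and before $\eta$.

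With these choices $w<u$ on the annulus $B_{3/4}\setminus B_{1/2}$. Hence $\Omega\subset B_{1/2}$, and on $\partial\Omega$ we get $w=u$ by continuity. This gives item 1, and item 2 holds by construction.

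The main obstacle is the order of the constants. The negative directions of $D^2w$ along $V$ are what force $w<u$ near $E$ far from the origin, where $u$ may vanish. These negative directions must be weak enough to keep $D^2w\in\Gamma_3$, which is why $M$ is fixed first depending only on $n$. Next comes $r_0$, then $\eta_0=\eta(u,r_0)$ from Lemma \ref{lem:cutoff-pre}, and only then $\eta$ and $\rho$.

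The only other point to check is that the $r_0$-neighbourhood of $E$ lies inside $\{|x'|\le r_0\}$; this holds because $E\subset V$. Finally, if $u$ is only defined on $B_2$, we apply the argument on $\overline{B_1}$ as in the statement.
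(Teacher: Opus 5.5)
Your argument is correct and is essentially the paper's proof. It uses the same normalization by the supporting affine function at the origin, the same $3$-convex quadratic barrier (large in the three directions transverse to the contact subspace, negative along it), and the same use of Lemma \ref{lem:cutoff-pre} to get $u\ge\eta_0$ outside the thin tube $\{|y|<r_0\}$. The only difference is bookkeeping: you shrink the amplitude $\eta\ll\eta_0$ so that $w<\eta_0$ on all of $\overline{B_1}$ and exclude $\{w>u\}$ from the annulus $B_{3/4}\setminus B_{1/2}$, whereas the paper keeps amplitude $\eta_0$ and checks $w<u$ on the boundary of the cylinder $D=B_{3/4}\cap\{|y|<r_0\}$. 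Your non-strict bound $w\le\eta_0$ is harmless, since $w\le u$ on the annulus already suffices.
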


\begin{proof}
    After subtracting a supporting affine function at the origin, we may assume
    \[
        u(0)=0,\qquad u\ge0\quad\text{in }B_1.
    \]
    By \eqref{eq:strict-k-convex}, $E:=\{u=0\}$ has dimension at most $n-3$. After a rotation, write $x=(y,z)$ with $y\in\mathbb R^3$ and $z\in\mathbb R^{n-3}$ so that
    \[
        E\subset\{y=0\}=\{0\}\times\mathbb R^{n-3}.
    \]
    Consider
    \[
        W(y,z):=M|y|^2-|z|^2,
    \]
    where $M=M(n)$ is chosen large enough that
    \[
        D^2W=2\operatorname{diag}(M,M,M,-1,\ldots,-1)\in\Gamma_3.
    \]
    Choose $r_0>0$ such that
    \[
        r_0^2\le \frac{1}{4(M+1)}.
    \]
    By Lemma \ref{lem:cutoff-pre}, there exists $\eta_0=\eta_0(u,r_0)>0$ such that
    \begin{equation}\label{eq:u<delta-tube-r0}
        \{x\in\overline{B_1}:u(x)<\eta_0\}\subset\{|y|<r_0\}.
    \end{equation}
    Define
    \[
        w(x):=\eta_0\left(W(x)+\frac14\right)
        =\eta_0\left(M|y|^2-|z|^2+\frac14\right).
    \]
    Then $w$ is $3$-convex. Let
    \[
        D:=B_{3/4}\cap\{|y|<r_0\}.
    \]
    Write $\partial D=S_1\cup S_2$, where
    \[
        S_1:=\partial B_{3/4}\cap\{|y|<r_0\},\qquad
        S_2:=\overline{B_{3/4}}\cap\{|y|=r_0\}.
    \]
    On $S_1$, $|z|^2=9/16-|y|^2$, and hence
    \[
        w=\eta_0\left((M+1)|y|^2-\frac{5}{16}\right)
        \le -\frac{1}{16}\eta_0<0\le u.
    \]
    On $S_2$, \eqref{eq:u<delta-tube-r0} gives $u\ge\eta_0$, while
    \[
        w\le \eta_0\left(Mr_0^2+\frac14\right)<\eta_0\le u.
    \]
    Thus $w<u$ on $\partial D$. Since $w(0)=\eta_0/4>u(0)=0$, the connected component $\Omega$ of $\{w>u\}\cap D$ containing the origin satisfies $\Omega\subset B_{3/4}$, $w>u$ in $\Omega$, and $w=u$ on $\partial\Omega$. By continuity, after decreasing $r>0$ if necessary, there is $\kappa>0$ such that $B_r\subset\Omega$ and $w-u\ge\kappa$ in $B_r$.
\end{proof}

\begin{remark}
\label{remark4.5}
    The constants $r$ and $\kappa$ in Lemma \ref{lem:barrier-construct} are constants attached to the particular solution $u$. They are not controlled by $n$, $f_0$, $\|u\|_{L^\infty}$, and $\|f\|_{C^{0,1}}$.  More concretely, the construction uses a positive lower bound for $u$ away from the zero-contact set; for example, $\kappa$ can be taken explicitly as 
    $$ \kappa = \frac{1}8{} \inf_{\overline B_{3/4}\cap\{|y|=r_0\}}u>0 $$
    when \(r_0\) is fixed.
    Pogorelov's counter-examples show that this positive gap may shrink to zero along a family of convex solutions. This is why the argument below yields regularity for each fixed solution $u$ satisfying the strict $3$-convexity condition, rather than a universal estimate with constants independent of this gap.
\end{remark}

\medskip

\subsection{Proof of Theorem \ref{thm:main}}

From Corollary \ref{coro:uniform_v_k}, $v_\ell\to u$ uniformly in $B_1$.
Let $w$ be the $3$-convex barrier for $u$ given by Lemma \ref{lem:barrier-construct}.  Since $w<u$ on $\partial D$ in the proof of Lemma \ref{lem:barrier-construct}, uniform convergence implies that, for all sufficiently large $\ell$, the connected component $\Omega_\ell$ of $\{w>v_\ell\}$ containing the origin satisfies $\overline{\Omega_\ell}\subset B_{3/4}$. Moreover,
\[
        w-v_\ell\ge \frac\kappa2\quad\text{in }B_r.
\]
The gradient estimate for $k$-Hessian equations with Lipschitz right-hand side \cite[Theorem 3.2]{ChouWang2001} gives
\[
        \|v_\ell\|_{C^{0,1}(B_{3/4})}
        \le C(n,f_0,\|u\|_{L^\infty(B_1)},\|f\|_{C^{0,1}(B_1)}).
\]
Applying Proposition \ref{prop:smooth-pogorelov} after rescaling from $B_1$ to $B_{3/4}$, and using $w-v_\ell\ge\kappa/2$ on $B_r$, we obtain
\begin{equation}\label{eq:v_k_C11_bound}
    \|D^2v_\ell\|_{L^\infty(B_r)}
    \le C(n,\kappa,f_0,\|w\|_{C^{0,1}(B_1)},\|u\|_{L^\infty(B_1)},\|f\|_{C^{0,1}(B_1)}).
\end{equation}
With this $C^{1,1}$ estimate, the equation is uniformly elliptic on the branch $D^2v_\ell\in\Gamma_3$. The Evans--Krylov--Safonov theory gives, for any $\alpha\in(0,1)$,
\[
        \|v_\ell\|_{C^{2,\alpha}(B_{r/2})}
        \le C(n,\alpha,\kappa,f_0,\|w\|_{C^{0,1}(B_1)},\|u\|_{L^\infty(B_1)},\|f\|_{C^{0,1}(B_1)}).
\]
After passing to a subsequence, $v_\ell\to u$ in $C^2(B_{r/2})$. Covering and scaling give $u\in C^2(B_2)$, which completes the proof of Theorem \ref{thm:main}.

\appendix

\section{Appendix}\label{app:theta}

\begin{theorem}
\label{thm:theta-k3}
There exist constants $\eps>0$ and $\delta>1$, depending only on $n$, such that for every ordered $\lambda\in\Gamma_3$,
\[
    d_1\le \eps\sigma_2(\lambda) \quad\Longrightarrow\quad \Theta(\lambda)\ge \delta.
\]
More precisely, for every fixed $\delta$ with
\[
    1<\delta<\frac32,
\]
there exists $\eps=\eps(n,\delta)>0$ for which the implication holds.
\end{theorem}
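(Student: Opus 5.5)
We argue by contradiction and compactness, using the homogeneity of the problem. Since $A^{(m)}$ is homogeneous of degree $1$ in $\lambda$, $d$ of degree $2$ and $\sigma_1$ of degree $1$, the quantity $\Theta_m(\lambda)=\frac{\sigma_1}{d_m}I_m(\lambda)$ is invariant under $\lambda\mapsto t\lambda$. We may therefore normalize $\sigma_1(\lambda)=1$ (or $\lambda_1=1$).

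Suppose the implication fails for some $\delta<3/2$. Then there are ordered $\lambda^{(j)}\in\Gamma_3$ with $d_1^{(j)}/\sigma_2(\lambda^{(j)})\to0$ and some $m$ with $\Theta_m(\lambda^{(j)})<\delta$. After passing to a subsequence we get $\lambda^{(j)}\to\bar\lambda\in\overline{\Gamma_3}$. The condition $d_1=\sigma_2(\lambda|1)\ll\sigma_2(\lambda)$ says that once the largest eigenvalue is removed, the remaining vector is nearly degenerate for $\sigma_2$. Writing $\sigma_2=\lambda_1\sigma_1(\lambda|1)+\sigma_2(\lambda|1)$, this forces $\lambda_1\sigma_1(\lambda|1)$ to dominate. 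In the limit the model configuration is a single large eigenvalue $\lambda_1$ with the rest $\lambda'=(\lambda_2,\dots,\lambda_n)$ satisfying $\sigma_2(\lambda')\approx0$ and $\sigma_1(\lambda')>0$. The boundary configurations can be more degenerate, for instance $\lambda_2$ also large. To control them I would rescale with a second parameter, say $\lambda_1=1$ and $\lambda'$ rescaled by $\sigma_1(\lambda|1)$, and treat the cases $\lambda_2\sim\lambda_1$ and $\lambda_2\ll\lambda_1$ separately.

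The core of the proof is computing $I_m$ explicitly. $I_m$ is the minimum of a quadratic form over the affine set $\{d^Tz=0,\ e^Tz=1\}$, so the Lagrange multiplier system $A^{(m)}z=\alpha d+\beta e$ applies. On $\{d^Tz=0\}$ the form $A^{(m)}$ is positive semidefinite by \eqref{eq:B-psd}, so the infimum is finite. The identity \eqref{eq:A-identity}, $z^TA^{(m)}z=-D^2\sigma_3[z,z]+2\sum_{i\ne m}\sigma_1(\lambda|im)z_i^2$, suggests we compute $-D^2\sigma_3[z,z]$ on $d^\perp$ via the standard formula for $\sigma_3$ restricted to the tangent space of its level set. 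We then minimize the remaining diagonal part subject to $e^Tz=1$, in the spirit of $\min\sum a_iz_i^2$ subject to $\sum z_i=1$, whose value is $(\sum a_i^{-1})^{-1}$. We do this separately for $m=1$ and for $m\ge2$.

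In the degenerate regime ($d_1$ small) the plan is to check that to leading order $\Theta_m\to\frac32$ or larger for every $m$. The limit value $3/2$ should come from the three-dimensional structure of $\sigma_3$, matching the constant $2/3$ in \eqref{eq:hodge}. Once $\liminf\Theta_m(\lambda^{(j)})\ge 3/2$ holds along every such sequence, the contradiction with $\delta<3/2$ finishes the proof.

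The main obstacle is this asymptotic evaluation of $I_m$. The matrix $A^{(m)}$ has entries of very different sizes: $\sigma_1(\lambda|ij)$ is of order $\lambda_1$ unless $1\in\{i,j\}$. The constraint $d^Tz=0$ also couples strongly to the small weight $d_1$. A naive limit therefore loses the relevant directions, since the minimizer tends to concentrate on $z_1$, which carries a small coefficient in $d$. I would first try the test vectors $z=e_1/1$ corrected along $e_m$ to restore $d^Tz=0$, to see where $3/2$ comes from. After that I would perform a Schur-complement reduction that eliminates $z_1$ exactly. This leaves a problem in $\lambda'$ with $\sigma_2(\lambda')\approx0$, which can be analyzed through the quadratic form $\sigma_1(\lambda')^2-2\sigma_2(\lambda')=|\lambda'|^2$. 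Handling uniformity in $m$, and the subcase where several $\lambda_i$ are comparable to $\lambda_1$, will require the careful case split described above.
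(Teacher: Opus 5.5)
Your framework (degree-zero homogeneity, normalizing $\sigma_1=1$, contradiction plus compactness, and a split according to whether $\lambda_2$ is comparable to $\lambda_1$) is the paper's. But the proposal stops exactly where the content of the theorem begins: $\liminf_j\Theta_m(\lambda^{(j)})\ge\frac32$ is announced as a plan and never proved. Two things are missing.

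First, the classification of limits. A limit point $\bar\lambda\in\ol\Gamma_3$ with $\sigma_1(\bar\lambda)=1$ and $\sigma_2(\bar\lambda|1)=0$ must be $(1,0,\dots,0)$ or $(a,b,0,\dots,0)$ with $a\ge b>0$; the paper derives this from Newton's identity. In the rank-one case, $d_1/\sigma_2$ is small because of the scale separation $\rho=\lambda_2/\lambda_1\to0$, and the rescaled block $\xi$ in $\lambda=(1,\rho\xi)$ can be any point of $\Gamma_2$. So your final reduction to a problem in $\lambda'$ with $\sigma_2(\lambda')\approx0$, analyzed via $|\lambda'|^2=\sigma_1(\lambda')^2-2\sigma_2(\lambda')$, does not cover this case.

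Second, the asymptotics themselves. They cannot be a leading-order check, because $\frac32$ is sharp. For $\lambda=(1,\rho\xi)$ and $m=1$ the paper finds $\Theta_1\to2-\frac{(1+3\tau)^2}{2-6\tau}$, where $\tau=\lim\rho\,\sigma_3(\xi)/\sigma_2(\xi)\in[-1,0]$. This equals $\frac32$ at $\tau=0$, for example with $\xi$ fixed and $\rho\to0$. In the rank-two case with $m=1,2$ one must carry $h=\lim|\xi|^2/(2T)\in[0,ab]$ through an explicit quadratic minimization. Any proof has to track these second-order parameters.

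The mechanism you suggest for evaluating $I_m$ would also fail. There is no usable closed formula for $-D^2\sigma_3$ on $d^\perp$; \eqref{eq:hodge} only gives nonnegativity there. Meanwhile the diagonal part $2\sum_{i\ne m}\sigma_1(\lambda|im)z_i^2$ is negligible in the critical case.
\begin{itemize}
\item For $m=1$ the coefficient of $z_m^2$ vanishes, so the harmonic-mean bound under $e^Tz=1$ alone is $0$.
\item Imposing $d^Tz=0$ as well does not help. For $\lambda=(1,\rho\xi)$, take $s=\sigma_2(\xi)$, $g=D\sigma_2(\xi)+\rho D\sigma_3(\xi)-\rho s e$ and $w=-\rho s\,g/|g|^2$. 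Then $z=(1-e\cdot w,\,w)$ is feasible, and its diagonal part is $O(\rho^3)=o(d_1)$ since $d_1=\rho^2 s$.
\end{itemize}
Hence essentially all of $I_1$ comes from $-D^2\sigma_3[z,z]$ coupled to both constraints, and on the small block its dominant part is the indefinite matrix $I-ee^T$. This is why the paper solves the full KKT system $2A^{(1)}z+\alpha d+\beta e=0$. It eliminates the small block using $M^{-1}=P-\rho PM_1P+\rho^2PM_1M^{-1}M_1P$ with $P=(I-ee^T)^{-1}$, rather than pivoting on $z_1$, where $A^{(1)}_{11}=0$.

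Finally, test vectors only give upper bounds on $I_m$. The link you suggest between $\frac32$ and the constant $\frac23$ in \eqref{eq:hodge} is a heuristic, not an argument.
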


\subsection{Preliminary identities}

For every $z\in\R^n$ and fixed $m$,
\begin{equation}\label{eq:app-A-formula}
        z^TA^{(m)}z =2\sum_{i\ne m}\sigma_1(\lambda|im)z_i^2 -2\sum_{i<j}\sigma_1(\lambda|ij)z_iz_j.
\end{equation}
Moreover,
\begin{equation}\label{eq:app-D2-sigma3}
        D^2\sigma_3(\lambda)[z,z]
        =2\sum_{i<j}\sigma_1(\lambda|ij)z_iz_j,
\end{equation}
so
\begin{equation}\label{eq:app-A-D2}
        z^TA^{(m)}z
        =-D^2\sigma_3(\lambda)[z,z]
        +2\sum_{i\ne m}\sigma_1(\lambda|im)z_i^2.
\end{equation}
The concavity of $\sigma_3^{1/3}$ on $\Gamma_3$ gives
\begin{equation}\label{eq:app-hodge}
        D^2\sigma_3(\lambda)[z,z]
        \le \frac23\frac{(d(\lambda)^Tz)^2}{\sigma_3(\lambda)}.
\end{equation}
Thus $-D^2\sigma_3(\lambda)[z,z]\ge0$ whenever $d^Tz=0$.  Consequently, on the constraint plane $d^Tz=0$,
\begin{equation}\label{eq:app-coercive}
        z^TA^{(m)}z
        \ge2\sum_{i\ne m}\sigma_1(\lambda|im)z_i^2.
\end{equation}

\subsection{Compactness reduction and structure of the limit}

Both $\Theta$ and the condition $d_1\le\eps\sigma_2(\lambda)$ are homogeneous of degree zero.  Since the conclusion is independent of the ordering of the coordinates after relabeling, we assume without loss of generality that
\[
        \lambda_1\ge\lambda_2\ge\cdots\ge\lambda_n.
\]
It is enough to prove the following compactness assertion.

\begin{proposition}\label{prop:app-compactness}
Let $\lambda^{(\ell)}\in\Gamma_3$ be ordered and satisfy
\[
        \sigma_1(\lambda^{(\ell)})=1,
        \qquad  \frac{d_1^{(\ell)}}{\sigma_2(\lambda^{(\ell)})}\to0.
\]
Then
\[
    \liminf_{\ell\to\infty}\Theta(\lambda^{(\ell)})\ge\frac32.
\]
\end{proposition}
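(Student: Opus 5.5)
The plan is to argue by contradiction and compactness, reading the structure of $\Theta$ off the limiting configuration. Pass to a subsequence along which $\lambda^{(\ell)}\to\lambda^*$ with $\sigma_1(\lambda^*)=1$ and $\lambda^*\in\overline{\Gamma_3}$. Along this subsequence $\Theta(\lambda^{(\ell)})$ converges to the $\liminf$, and we may fix one index $m$ realizing the minimum in $\Theta$ for every $\ell$. Let $\mu:=\lambda|1$. Since $\lambda\in\Gamma_3$ forces $\mu\in\Gamma_2$, we have $d_1=\sigma_2(\mu)\ge0$. Moreover $\sigma_2(\lambda)=\lambda_1\sigma_1(\mu)+\sigma_2(\mu)$, so the hypothesis $d_1/\sigma_2\to0$ says that $\sigma_2(\mu)$ is negligible against $\lambda_1\sigma_1(\mu)$. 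Also $\sigma_3(\lambda)=\lambda_1\sigma_2(\mu)+\sigma_3(\mu)\le C\,\sigma_2(\mu)$ by Newton--Maclaurin in $\Gamma_2$. Hence, after rescaling, $\sigma_3$ and $d_1$ are both of lower order than every other $d_j=\lambda_1\sigma_1(\mu|j)+\sigma_2(\mu|j)\approx\lambda_1\sigma_1(\lambda|1j)$ with $j\ge2$. The first step is to record these asymptotic relations, with $d_1\ll d_j$ for $j\ge 2$, in a normalized form.

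Next, for the fixed $m$, we estimate the constrained minimum $I_m$ in \eqref{eq:I-Theta-def-main}. Writing $z=(z_1,z')$, the constraint $d^Tz=0$ reads
\[
d'^Tz'=-d_1z_1 .
\]
We use \eqref{eq:app-A-D2}, which expresses $z^TA^{(m)}z$ as $-D^2\sigma_3[z,z]$ plus the diagonal term $2\sum_{i\ne m}\sigma_1(\lambda|im)z_i^2$. On the plane $d^Tz=0$ the term $-D^2\sigma_3[z,z]$ is nonnegative by \eqref{eq:app-hodge}, and it can be expanded explicitly in the splitting $z=(z_1,z')$: it equals $-2z_1\sum_{j\ge2}\sigma_1(\mu|j)z_j-D^2\sigma_3(\mu)[z',z']$. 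We then minimize over $z_1$ for fixed $z'$, which is a Schur-complement/Lagrange-multiplier computation. This reduces $I_m$ to a quadratic problem in $z'$ with the single normalization $e^Tz=1$. We compare the outcome with $\tfrac32\,d_m/\sigma_1=\tfrac32\,d_m$.

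The heuristic test case shows where the constant comes from. For $m\ne1$, take $z\approx e_1$, corrected by a tiny multiple of some $e_j$ to enforce $d^Tz=0$. This gives $z^TA^{(m)}z\approx2\sigma_1(\lambda|1m)$, while $d_m\approx\lambda_1\sigma_1(\lambda|1m)$ and $\lambda_1\le1$. So this competitor already yields a ratio of at least $2$, and the $\tfrac32$ must come from optimizing the mixed terms, namely the cross term $-2z_1\sum_j\sigma_1(\mu|j)z_j$ balanced against the diagonal weights. For $m=1$, $d_1\to0$, and we only need $I_1\ge\tfrac32 d_1(1+o(1))$. Here the coercive bound \eqref{eq:app-coercive} on $z'$, combined with the fact that the constraint forces $z'$ to carry mass of size comparable to $d_1$ in the $d'$ direction, should give the bound with room to spare. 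Lower semicontinuity of the constrained infimum in $\lambda$ is then applied in rescaled variables.

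The main obstacle is degeneration inside $\mu$. The limit $\mu^*$ may have several eigenvalues tending to $0$ at different rates, $\sigma_2(\mu^*)$ may vanish, and some $\sigma_1(\lambda|ij)$ may tend to zero. In that case the naive limit of $A^{(m)}$ loses coercivity and the infimum is not continuous. I would handle this by a multi-scale blow-up: order the eigenvalues of $\mu$ into blocks of comparable size, rescale $z$ blockwise so that each diagonal weight $\sigma_1(\lambda|im)$ is of order one, and pass to a limiting quadratic form on each block. In the limit only the top block (containing $\lambda_1$) and the block containing $m$ interact at leading order. The lower-order blocks contribute nonnegatively by \eqref{eq:app-hodge} and can be dropped. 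Checking that the limiting two-block problem has value at least $\tfrac32 d_m$ (with equality approached precisely when $\lambda_1^*\to1$) is the explicit computation I expect to be the delicate heart of the argument.
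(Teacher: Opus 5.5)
Your outline leaves the actual computation undone, and two of its concrete claims would fail.

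\textbf{The roles of the indices are reversed.} You expect $m=1$ to hold ``with room to spare'' via the coercive term, and you locate the constant $\tfrac32$ in a two-block problem for $m\neq1$. In fact $m=1$ is exactly where $\tfrac32$ is attained, and there the coercive term in \eqref{eq:app-coercive} is of lower order.

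Take $n=3$ and $\lambda=(1,\rho,\rho)$, so $d=(\rho^2,\rho,\rho)$ and $d_1/\sigma_2\to0$.
\begin{itemize}
\item The constraints force $z_1=1/(1-\rho)$ and $z_2+z_3=-\rho z_1$.
\item The coercive term $2\rho(z_2^2+z_3^2)$ can therefore be as small as $\rho^3z_1^2=O(\rho\,d_1)$.
\item A direct computation gives $I_1=\rho^2z_1^2(\tfrac32+\rho)$, hence $\Theta_1\to\tfrac32$, while $\Theta_2,\Theta_3\to2$.
\end{itemize}
The whole value comes from the indefinite part. After normalizing $\lambda_1=1$, the $z'$-block of $A^{(1)}$ is $I-ee^T+O(\rho)$, so the constrained problem must genuinely be solved.

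The paper does this with three ingredients: the KKT system, the expansion of $(I-ee^T+\rho M_1)^{-1}$, and the parameter $\tau=\lim\rho_\ell\sigma_3(\xi^{(\ell)})/\sigma_2(\xi^{(\ell)})\in[-1,0]$ for $\lambda^{(\ell)}=(1,\rho_\ell\xi^{(\ell)})$. This gives $\lim R_\ell=2-(1+3\tau)^2/(2-6\tau)\ge\tfrac32$.

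Conversely, $m\ge2$ in the rank-one limit and $m\ge3$ in the rank-two limit both give $\liminf\Theta_m\ge2$. So your ``two-block problem for $m\neq1$, sharp as $\lambda_1^*\to1$'' is not where the constant comes from. Note also that your competitor $z\approx e_1$ only bounds $\Theta_m$ from above.

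\textbf{You are missing the classification of the limit, which reduces your multi-scale blow-up to two explicit regimes.} Since $\sigma_2(\lambda^{(\ell)})\le\tfrac12$, the hypothesis gives $\sigma_2(\lambda^*|1)=0$. Together with $\lambda^*\in\overline{\Gamma}_3$ and Newton's identity, this leaves only two possibilities:
\begin{itemize}
\item $\lambda^*=(1,0,\dots,0)$, or
\item $\lambda^*=(a,b,0,\dots,0)$ with $a\ge b>0$ and $a+b=1$.
\end{itemize}

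In the second case your asymptotics are false. Set $T=\lambda_3+\dots+\lambda_n\to0$ and $H=\sigma_2(\lambda_3,\dots,\lambda_n)$, so that $H/T\to-h\in[-ab,0]$. Then $d_1=\lambda_2T+H$ and $d_2=\lambda_1T+H$. Consequently:
\begin{itemize}
\item $d_1/d_2\to(b-h)/(a-h)>0$, contradicting your claim that $d_1\ll d_j$ for all $j\ge2$;
\item $d_2\not\approx\lambda_1\sigma_1(\lambda|12)$ when $h>0$.
\end{itemize}

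The small block does not decouple here either. For bounded-quotient competitors one has $z_j=O(T^{1/2})$ for $j\ge3$, with $\sum_{j\ge3}z_j=O(T)$. These components enter $z^TA^{(1)}z$ at the same order $T$ as $d_1$. One example is the term $2T\,p\cdot v$, where $p=\lim\xi/T^{1/2}$ and $v=\lim(z_j)_{j\ge3}/T^{1/2}$. Such terms must be absorbed by Young's inequality, followed by an explicit check that the limiting form satisfies $Q\ge\tfrac32(b-h)$ for all $0\le h\le ab$.

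You cannot discard these lower-order blocks as ``nonnegative by \eqref{eq:app-hodge}''. That inequality holds only for the full vector on $\{d^Tz=0\}$, not block by block.
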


Assume the hypotheses of Proposition \ref{prop:app-compactness}.  Since
\[
    \sigma_2(\lambda)=\frac12(\sigma_1(\lambda)^2-|\lambda|^2)>0,
\]
the normalization $\sigma_1(\lambda)=1$ gives $|\lambda|<1$.  After passing to a subsequence,
\[
    \lambda^{(\ell)}\to\lambda\in\ol\Gamma_3,
        \qquad
        \sigma_1(\lambda)=1.
\]
Because $\sigma_2(\lambda^{(\ell)})\le1/2$, the hypothesis implies
\begin{equation}\label{eq:app-sigma2-eta-zero}
        \sigma_2(\lambda|1)=0.
\end{equation}
Let
\[
    \eta=(\lambda_2,\ldots,\lambda_n),
        \qquad
        S=\sigma_1(\eta)=1-\lambda_1.
\]
Then \eqref{eq:app-sigma2-eta-zero} becomes
\begin{equation}\label{eq:app-sigma2-eta}
        \sigma_2(\eta)=0.
\end{equation}
Since $\lambda\in\ol\Gamma_3$,
\[
        \sigma_2(\lambda)=\lambda_1S\ge0,
        \qquad
        \sigma_3(\lambda)=\sigma_3(\eta)\ge0.
\]
As $\lambda_1\ge1/n>0$, we have $S\ge0$.

If $S=0$, then \eqref{eq:app-sigma2-eta} gives
\[
        |\eta|^2=S^2-2\sigma_2(\eta)=0,
\]
hence $\eta=0$.  If $S>0$, put $\zeta=\eta/S$.  Then
\[
        \sum_i\zeta_i=1,
        \qquad
        \sigma_2(\zeta)=0,
        \qquad
        \sum_i\zeta_i^2=1.
\]
In particular $\zeta_i\le1$ for every $i$, and therefore
\[
        1-\sum_i\zeta_i^3
        =\sum_i\zeta_i^2(1-\zeta_i)\ge0.
\]
Newton's identity gives
\[
        3\sigma_3(\zeta)
        =\sum_i\zeta_i^3+3\sigma_1(\zeta)\sigma_2(\zeta)-\sigma_1(\zeta)^3
        =\sum_i\zeta_i^3-1.
\]
Thus $\sigma_3(\zeta)\le0$.  On the other hand,
\[
        \sigma_3(\eta)=S^3\sigma_3(\zeta)\ge0.
\]
Hence $\sigma_3(\zeta)=0$, and
\[
        \zeta_i^2(1-\zeta_i)=0
        \quad\text{for every }i.
\]
Thus each $\zeta_i$ is either $0$ or $1$, and because $\sum_i\zeta_i=1$, exactly one $\zeta_i$ equals $1$.

Consequently, the limit of $\lambda^{(\ell)}$ has one of the two forms
\begin{equation}\label{eq:app-rank-one-limit}
        \lambda=(1,0,\ldots,0),
\end{equation}
or
\begin{equation}\label{eq:app-rank-two-limit}
        \lambda=(a,b,0,\ldots,0),
        \qquad
        a\ge b>0,
        \qquad
        a+b=1.
\end{equation}
We call these the rank-one and rank-two cases and treat them separately.

\subsection{The rank-one limit}

Assume first that \eqref{eq:app-rank-one-limit} holds.  Since $\lambda_1^{(\ell)}\to1$, homogeneity allows us to divide by $\lambda_1^{(\ell)}$ and write
\begin{equation}\label{eq:app-rank-one-param}
        \lambda^{(\ell)}=(1,\rho_\ell\xi^{(\ell)}),
        \qquad
        \rho_\ell=\frac{\lambda_2^{(\ell)}}{\lambda_1^{(\ell)}}\to0^+,
        \qquad
        \xi_1^{(\ell)}=1.
\end{equation}
Here $\xi^{(\ell)}\in\R^N$, where $N=n-1$.  Throughout this subsection, $e$ denotes the all-ones vector in $\R^N$ whenever it is applied to the small block.  Since $\xi^{(\ell)}\in\Gamma_2$ and $\xi_1^{(\ell)}=1$, the sequence $\xi^{(\ell)}$ is bounded.  Passing to a subsequence,
\begin{equation}\label{eq:app-xi-limit}
        \xi^{(\ell)}\to\xi\in\ol\Gamma_2,
        \qquad
        \sigma_1(\xi)>0.
\end{equation}

\subsubsection{The index \texorpdfstring{$m=1$}{}}

For $\lambda=(1,\rho\xi)$, set
\[
        S=\sigma_1(\xi),
        \qquad
        s=\sigma_2(\xi),
        \qquad
        t=\sigma_3(\xi),
\]
and
\[
        b:=D\sigma_2(\xi)\in\R^N,
        \qquad
        c:=D\sigma_3(\xi)\in\R^N.
\]
Thus
\[
        b_i=S-\xi_i=\sigma_1(\xi|i),
        \qquad
        c_i=\sigma_2(\xi|i).
\]
Write a feasible vector as $z=(z_0,w)$, $w\in\R^N$.  Then
\[
        d_1=\rho^2s,
        \qquad
        d_{i+1}=\rho b_i+\rho^2c_i
        \quad(i=1,\ldots,N).
\]
The two constraints are
\begin{equation}\label{eq:app-r1-constraint-d}
        (b+\rho c)\cdot w+\rho s z_0=0,
\end{equation}
\begin{equation}\label{eq:app-r1-constraint-e}
        z_0+e\cdot w=1.
\end{equation}
For fixed $\ell$, since $\xi^{(\ell)}\in\Gamma_2$, we have
\[
        b_{\ell,i}=\sigma_1(\xi^{(\ell)}|i)>0
        \quad(i=1,\ldots,N).
\]
Hence, for $z=(z_0,w)$ in the feasible set, \eqref{eq:app-coercive} gives
\[
        z^TA^{(1)}(\lambda^{(\ell)})z
        \ge2\rho_\ell\sum_{i=1}^N b_{\ell,i}w_i^2
        \ge2\rho_\ell b_{\ell,*}|w|^2,
        \qquad b_{\ell,*}:=\min_i b_{\ell,i}>0.
\]
Together with $z_0+e\cdot w=1$, this implies coercivity on the feasible set.  Thus the infimum $I_1(\lambda^{(\ell)})$ is attained.  Take minimizers
\[
        z^{(\ell)}=(z_0^{(\ell)},w^{(\ell)}).
\]
Put
\begin{equation}\label{eq:app-R-ell-def}
        R_\ell:=\frac{(z^{(\ell)})^TA^{(1)}(\lambda^{(\ell)})z^{(\ell)}}{\rho_\ell^2s_\ell},
        \qquad
        s_\ell:=\sigma_2(\xi^{(\ell)}).
\end{equation}
If $R_\ell$ is unbounded along a subsequence, then the desired lower bound for $\Theta_1$ is immediate along that subsequence.  Thus, after passing to a subsequence, we assume $R_\ell\le C$.  Let
\begin{equation}\label{eq:app-tau-def}
        \tau_\ell:=\frac{\rho_\ell t_\ell}{s_\ell},
        \qquad
        t_\ell:=\sigma_3(\xi^{(\ell)}).
\end{equation}
Since
\[
        \sigma_3(\lambda^{(\ell)})=\rho_\ell^2(s_\ell+\rho_\ell t_\ell)>0,
\]
we have $\tau_\ell>-1$.  If $\sigma_2(\xi)>0$, then $s_\ell\to s_0>0$, while $t_\ell$ remains bounded, so $\tau_\ell\to0$.  If $\sigma_2(\xi)=0$, then $\sigma_3(\xi)\le0$.  Maclaurin's inequality gives $t_\ell\le Cs_\ell^{3/2}$, hence
\[
        \tau_\ell=\rho_\ell\frac{t_\ell}{s_\ell}
        \le C\rho_\ell s_\ell^{1/2}\to0.
\]
Thus, in both cases, after passing to a subsequence,
\begin{equation}\label{eq:app-tau-limit}
        \tau_\ell\to\tau\in[-1,0].
\end{equation}
For the optimization problem
\[
        \inf_{d^Tz=0,\ e^Tz=1} z^TA^{(1)}z,
\]
the KKT equation is
\begin{equation}\label{eq:app-KKT-r1}
        2A^{(1)}z^{(\ell)}+\alpha_\ell d^{(\ell)}+\beta_\ell e=0.
\end{equation}
Multiplying by $z^{(\ell)}$ and using \eqref{eq:app-R-ell-def} and the two constraints gives
\begin{equation}\label{eq:app-beta-est}
        \beta_\ell=-2(z^{(\ell)})^TA^{(1)}z^{(\ell)}
        =-2\rho_\ell^2s_\ell R_\ell
        =O(\rho_\ell^2s_\ell).
\end{equation}
Write
\[
        M_\ell=M_0+\rho_\ell M_1^\ell\in\R^{N\times N},
        \qquad
        M_0=I-ee^T,
\]
where
\[
        (M_1^\ell)_{ii}=2(b_\ell)_i,
        \qquad
        (M_1^\ell)_{ij}=-(S_\ell-\xi_i^{(\ell)}-\xi_j^{(\ell)})
        \quad(i\ne j).
\]
Since $N\ge2$, $M_0$ is invertible and
\begin{equation}\label{eq:app-P-def}
        P:=M_0^{-1}=I-\frac{1}{N-1}ee^T.
\end{equation}
Define
\[
        U_\ell:=z_0^{(\ell)}-\frac{\alpha_\ell}{2}.
\]
Then the $w^{(\ell)}$ part of \eqref{eq:app-KKT-r1} is
\[
        M_\ell w^{(\ell)}
        =\rho_\ell U_\ell b_\ell
        -\frac{\alpha_\ell\rho_\ell^2}{2}c_\ell
        -\frac{\beta_\ell}{2}e.
\]
Since $M_0$ is invertible and $\rho_\ell\to0$, $M_\ell$ is also invertible for large $\ell$.  Thus
\begin{equation}\label{eq:app-w-expression}
        w^{(\ell)}
        =\rho_\ell U_\ell M_\ell^{-1}b_\ell
        -\frac{\alpha_\ell\rho_\ell^2}{2}M_\ell^{-1}c_\ell
        -\frac{\beta_\ell}{2}M_\ell^{-1}e.
\end{equation}
Substitute \eqref{eq:app-w-expression} into the constraint \eqref{eq:app-r1-constraint-d}.  Omitting the index $\ell$ temporarily,
\begin{align}\label{eq:app-constraint-expanded}
        0={}&\rho sz_0+\rho U b^TM^{-1}b+\rho^2U c^TM^{-1}b
        -\frac{\alpha\rho^2}{2}b^TM^{-1}c\notag\\
        &-\frac{\alpha\rho^3}{2}c^TM^{-1}c+O(|\beta|).
\end{align}
The normalization constraint \eqref{eq:app-r1-constraint-e} and \eqref{eq:app-w-expression} also give
\begin{equation}\label{eq:app-z0-est}
        z_0^{(\ell)}=1+O\bigl(\rho_\ell(1+|U_\ell|)\bigr).
\end{equation}
Indeed, $z_0=1-e\cdot w$ and \eqref{eq:app-w-expression} imply
\[
        |z_0-1|=|e\cdot w|
        \le C\rho|U|+C\rho^2|\alpha|+C\rho^2,
\]
where boundedness of $M_\ell^{-1}$ is used.  Since $\alpha=2(z_0-U)$,
\[
        |\alpha|\le2|z_0-1|+2+2|U|.
\]
Thus $|e\cdot w|\le C\rho+C\rho|U|$, after absorbing the $C\rho^2|z_0-1|$ term into the left-hand side.  This proves \eqref{eq:app-z0-est}.  Also
\begin{equation}\label{eq:app-alpha-est}
        |\alpha|\le C+C|U|.
\end{equation}
Now we estimate the other terms in \eqref{eq:app-constraint-expanded}.  Direct computation gives
\begin{equation}\label{eq:app-P-identities}
        Pb=-\xi,
        \qquad
        b^TPb=-2s,
        \qquad
        c\cdot\xi=3t.
\end{equation}
Also,
\[
        (M_1\xi)_i=4\xi_i(S-\xi_i)-2s,
\]
and therefore
\begin{align}\label{eq:app-xiMxi}
        \xi^TM_1\xi
        &=\sum_{i=1}^N\xi_i(M_1\xi)_i \ = \sum_{i=1}^N\bigl(4\xi_i(S-\xi_i)-2s\bigr)\xi_i\notag\\
        &=4S\sum_{i=1}^N\xi_i^2-4\sum_{i=1}^N\xi_i^3-2Ss \notag \\
        & =4S(S^2-2s)-4(S^3-3Ss+3t)-2Ss \notag \\
        &=2Ss-12t.
\end{align}
The inverse matrix $M^{-1}$ can be expressed as
\[
        M^{-1}=P-\rho PM_1M^{-1},
        \qquad
        M^{-1}=P-\rho M^{-1}M_1P.
\]
Substituting the latter equality into the former gives
\begin{equation}\label{eq:app-M-inverse-expansion}
        M^{-1}=P-\rho PM_1P+\rho^2PM_1M^{-1}M_1P.
\end{equation}
Combining \eqref{eq:app-P-identities} and \eqref{eq:app-xiMxi}, we have
\begin{align}\label{eq:app-bMinvb}
        b^TM^{-1}b
        &=b^TPb-\rho(Pb)^TM_1(Pb)+\rho^2(M_1Pb)^TM^{-1}(M_1Pb)\notag\\
        &=-2s-\rho(2Ss-12t)+\rho^2O(|M_1Pb|^2).
\end{align}
Since $M_1Pb=-M_1\xi$ and $(M_1\xi)_i=4\xi_i(S-\xi_i)-2s$,
\[
        |M_1\xi|^2
        \le C\sum_i\xi_i^2(S-\xi_i)^2+Cs
        \le C\sum_i\xi_i^2(S-\xi_i)+Cs.
\]
Using
\[
        \sum_i\xi_i^2(S-\xi_i)=Ss-3t,
\]
we get
\begin{equation}\label{eq:app-M1xi-bound}
        |M_1\xi|^2\le C(s+|t|).
\end{equation}
Thus
\begin{equation}\label{eq:app-bMinvb-final}
        b^TM^{-1}b
        =-2s-\rho(2Ss-12t)+O(\rho^2(s+|t|)).
\end{equation}
Similarly, from \eqref{eq:app-M-inverse-expansion},
\begin{align*}
        b^TM^{-1}c
        &=b^TPc-\rho(Pb)^TM_1(Pc)+\rho^2 b^TPM_1M^{-1}M_1Pc\notag\\
        &=-\xi\cdot c+\rho(M_1\xi)^T(Pc)+\rho^2(M_1Pb)^TM^{-1}M_1(Pc).
\end{align*}
Direct computation shows that
\[
        (Pc)_i=\frac{s}{N-1}-\xi_i(S-\xi_i),
\]
which implies $|Pc|^2=O(s+|t|)$.  Combining this with \eqref{eq:app-P-identities} and \eqref{eq:app-M1xi-bound}, we have
\begin{equation}\label{eq:app-bMinvc}
        b^TM^{-1}c=-3t+O(\rho(s+|t|)).
\end{equation}
Finally,
\[
        |c|^2
        =\sum_i\bigl(s-\xi_i(S-\xi_i)\bigr)^2
        \le Cs+\sum_i\xi_i^2(S-\xi_i)
        =Cs+Ss-3t
        \le C(s+|t|),
\]
and the boundedness of $M^{-1}$ gives
\begin{equation}\label{eq:app-cMinvc}
        c^TM^{-1}c=O(s+|t|).
\end{equation}
Dividing \eqref{eq:app-constraint-expanded} by $\rho s$ and using \eqref{eq:app-beta-est}, \eqref{eq:app-alpha-est}, and \eqref{eq:app-bMinvb-final}--\eqref{eq:app-cMinvc}, we obtain
\begin{align*}
        0={}&z_0^{(\ell)}
        +U_\ell\frac{b_\ell^TM_\ell^{-1}b_\ell}{s_\ell}
        +\rho_\ell U_\ell\frac{b_\ell^TM_\ell^{-1}c_\ell}{s_\ell}
        -\frac{\alpha_\ell\rho_\ell}{2}\frac{b_\ell^TM_\ell^{-1}c_\ell}{s_\ell}
        -\frac{\alpha_\ell\rho_\ell^2}{2}\frac{c_\ell^TM_\ell^{-1}c_\ell}{s_\ell}
        +O(\rho_\ell)\\
        ={}&z_0^{(\ell)}+
        \left(-2+9\frac{\rho_\ell t_\ell}{s_\ell}\right)U_\ell
        +\frac32\frac{\rho_\ell t_\ell}{s_\ell}\alpha_\ell
        +o(1)(1+|U_\ell|)\\
        ={}&z_0^{(\ell)}+(-2+9\tau_\ell)U_\ell+\frac32\tau_\ell\alpha_\ell
        +o(1)(1+|U_\ell|).
\end{align*}
Since $\alpha_\ell/2=z_0^{(\ell)}-U_\ell$, we get
\begin{equation}\label{eq:app-U-equation}
        (1+3\tau_\ell)z_0^{(\ell)}+(-2+6\tau_\ell)U_\ell
        =o(1)(1+|U_\ell|).
\end{equation}
By \eqref{eq:app-tau-limit}, the coefficient $-2+6\tau_\ell$ is bounded away from zero.  Combining \eqref{eq:app-z0-est} and \eqref{eq:app-U-equation} gives $U_\ell=O(1)$.  Then \eqref{eq:app-w-expression} yields $w^{(\ell)}=O(\rho_\ell)$.  After passing to a subsequence, $U_\ell\to U$.  Letting $\ell\to\infty$ in \eqref{eq:app-U-equation} and using $z_0^{(\ell)}\to1$ gives
\begin{equation}\label{eq:app-U-limit}
        U=\frac{1+3\tau}{2-6\tau}.
\end{equation}
It remains to compute the limiting quotient.  The first component of \eqref{eq:app-KKT-r1} is
\[
        2(A^{(1)}z^{(\ell)})_0+
        \alpha_\ell\rho_\ell^2s_\ell+
        \beta_\ell=0.
\]
Since $(A^{(1)}z)_0=-\rho b\cdot w$ and \eqref{eq:app-r1-constraint-d} gives
\[
        -b\cdot w=\rho sz_0+\rho c\cdot w,
\]
we obtain from \eqref{eq:app-beta-est} that
\begin{equation}\label{eq:app-R-expression}
        R_\ell=z_0^{(\ell)}+\frac{c_\ell\cdot w^{(\ell)}}{s_\ell}+\frac{\alpha_\ell}{2}.
\end{equation}
Using \eqref{eq:app-w-expression}, \eqref{eq:app-bMinvc}, and \eqref{eq:app-cMinvc},
\[
        \frac{c_\ell\cdot w^{(\ell)}}{s_\ell}
        =-3\tau_\ell U_\ell+o(1).
\]
Together with $\alpha_\ell/2=z_0^{(\ell)}-U_\ell$, \eqref{eq:app-R-expression} becomes
\[
        R_\ell=2z_0^{(\ell)}-(1+3\tau_\ell)U_\ell+o(1).
\]
Therefore, by \eqref{eq:app-U-limit},
\[
        \lim_{\ell\to\infty}R_\ell
        =2-\frac{(1+3\tau)^2}{2-6\tau}.
\]
For $\tau\in[-1,0]$,
\[
        2-\frac{(1+3\tau)^2}{2-6\tau}-\frac32
        =\frac{9\tau(\tau+1)}{2(3\tau-1)}\ge0.
\]
Thus $\liminf_{\ell\to\infty}R_\ell\ge3/2$ and
\[
        \Theta_1(\lambda^{(\ell)})
        =\frac{\sigma_1(\lambda^{(\ell)})}{d_1^{(\ell)}}I_1(\lambda^{(\ell)})
        =(1+\rho_\ell S_\ell)R_\ell,
\]
where $1+\rho_\ell S_\ell\to1$.  Hence
\begin{equation}\label{eq:app-theta1-rankone}
        \liminf_{\ell\to\infty}\Theta_1(\lambda^{(\ell)})\ge\frac32.
\end{equation}

\subsubsection{The indices \texorpdfstring{$m\ge2$}{} in the rank-one limit}

Fix $m\ge2$.  Write
\[
        \lambda_1=1,
        \qquad
        \lambda_m=\rho a,
        \qquad
        \lambda_j=\rho\eta_j\quad(j\in J),
\]
where $J=\{2,\ldots,n\}\setminus\{m\}$.  Set
\[
        T=\sigma_1(\eta),
        \qquad
        H=\sigma_2(\eta),
        \qquad
        K=\sigma_3(\eta).
\]
Then
\begin{equation}\label{eq:app-d1-dm-rankone}
        d_1=\rho^2(aT+H),
        \qquad
        d_m=\rho T+\rho^2H=\rho(T+\rho H),
\end{equation}
\begin{equation}\label{eq:app-dj-rankone}
        d_j=\rho(a+T-\eta_j)+\rho^2\bigl(a(T-\eta_j)+\sigma_2(\eta|j)\bigr)
        \quad(j\in J).
\end{equation}
Moreover,
\begin{equation}\label{eq:app-sigma1-pairs-rankone}
        \sigma_1(\lambda|1m)=\rho T,
        \qquad
        \sigma_1(\lambda|jm)=1+\rho(T-\eta_j)
        \quad(j\in J).
\end{equation}
Notice that $\lambda\in\Gamma_3$ gives $T>0$.

It suffices to consider sequences of feasible vectors with bounded quotient; otherwise the desired lower bound is immediate.  Thus let
\[
        z^{(\ell)}=(z_1^{(\ell)},z_m^{(\ell)},u^{(\ell)}),
        \qquad
        u^{(\ell)}=(z_j^{(\ell)})_{j\in J},
\]
with
\begin{equation}\label{eq:app-q-bounded-rankone}
        q_\ell:=(z^{(\ell)})^TA^{(m)}(\lambda^{(\ell)})z^{(\ell)}
        \le C d_m^{(\ell)}.
\end{equation}
By \eqref{eq:app-coercive} and \eqref{eq:app-sigma1-pairs-rankone},
\[
        q_\ell\ge2\rho_\ell T_\ell(z_1^{(\ell)})^2+(2+o(1))|u^{(\ell)}|^2.
\]
Hence
\begin{equation}\label{eq:app-z1-u-bound-rankone}
        \rho_\ell T_\ell(z_1^{(\ell)})^2+|u^{(\ell)}|^2\le C d_m^{(\ell)}.
\end{equation}
If $T_\ell\to T_0>0$, then $d_m^{(\ell)}=\rho_\ell T_\ell(1+o(1))$.  From \eqref{eq:app-z1-u-bound-rankone}, $z_1^{(\ell)}=O(1)$ and $u^{(\ell)}\to0$.  Dividing $d^Tz=0$ by $d_m^{(\ell)}$ gives
\[
        z_m^{(\ell)}
        =-\frac{d_1^{(\ell)}}{d_m^{(\ell)}}z_1^{(\ell)}
        -\sum_{j\in J}\frac{d_j^{(\ell)}}{d_m^{(\ell)}}u_j^{(\ell)}\to0,
\]
because $d_1^{(\ell)}/d_m^{(\ell)}=O(\rho_\ell)$ and $d_j^{(\ell)}/d_m^{(\ell)}=O(1)$.  The normalization $e^Tz=1$ then gives $z_1^{(\ell)}\to1$.

It remains to treat the case $T_\ell\to0$.  Since $\sigma_1(\xi^{(\ell)})=a_\ell+T_\ell$ has a positive limit by \eqref{eq:app-xi-limit}, we must have
\begin{equation}\label{eq:app-a-positive}
        a_\ell\to a_0>0.
\end{equation}
Also $\eta^{(\ell)}\to0$: indeed, $\sigma_1(\eta)=0$ and $\xi=(a_0,\eta)\in\ol\Gamma_2$, so
\[
        0\le \sigma_2(\xi)=a_0\sigma_1(\eta)+\sigma_2(\eta)
        =\sigma_2(\eta)=-\frac12|\eta|^2,
\]
which forces $\eta=0$.  We first claim that
\begin{equation}\label{eq:app-H-bound}
        |H_\ell|\le C T_\ell.
\end{equation}
If $H_\ell\ge0$, \eqref{eq:app-H-bound} follows from
\[
        2H_\ell=T_\ell^2-|\eta^{(\ell)}|^2\le T_\ell^2.
\]
If $H_\ell<0$, suppose for contradiction that $-H_\ell/T_\ell\to+\infty$.  Then
\[
        \sigma_3(\lambda^{(\ell)})
        =\rho_\ell^2(a_\ell T_\ell+H_\ell)+\rho_\ell^3(a_\ell H_\ell+K_\ell)>0
\]
gives
\[
        a_\ell T_\ell+H_\ell+\rho_\ell(a_\ell H_\ell+K_\ell)>0.
\]
But
\[
        |\eta^{(\ell)}|^2=T_\ell^2-2H_\ell\le C|H_\ell|,
\]
and
\[
        |K_\ell|\le C|\eta^{(\ell)}|^3\le C|\eta^{(\ell)}|^2\le C|H_\ell|.
\]
This forces
\[
        a_\ell T_\ell+H_\ell+
\rho_\ell(a_\ell H_\ell+K_\ell)
        =a_\ell T_\ell-|H_\ell|+o(|H_\ell|)<0
\]
for large $\ell$, a contradiction.  This proves \eqref{eq:app-H-bound}.  Consequently,
\begin{equation}\label{eq:app-dm-asym-rankone}
        d_m^{(\ell)}=\rho_\ell T_\ell(1+O(\rho_\ell)).
\end{equation}
Using \eqref{eq:app-z1-u-bound-rankone},
\begin{equation}\label{eq:app-z1-u-bound-rankone2}
        z_1^{(\ell)}=O(1),
        \qquad
        |u^{(\ell)}|^2=O(\rho_\ell T_\ell),
\end{equation}
and then $z_m^{(\ell)}=O(1)$ by $e^Tz=1$.  Let
\[
        r_\ell:=\sum_{j\in J}u_j^{(\ell)}.
\]
Expanding $d^Tz=0$ with \eqref{eq:app-d1-dm-rankone}--\eqref{eq:app-dj-rankone} and dividing by $\rho_\ell$ gives
\begin{align}\label{eq:app-dconstraint-rankone-expanded}
        0={}&\rho_\ell(a_\ell T_\ell+H_\ell)z_1^{(\ell)}+(T_\ell+\rho_\ell H_\ell)z_m^{(\ell)}\notag\\
        &+\sum_{j\in J}(a_\ell+T_\ell-\eta_j^{(\ell)})u_j^{(\ell)}\notag\\
        &+\rho_\ell\sum_{j\in J}
        \bigl(a_\ell(T_\ell-\eta_j^{(\ell)})+\sigma_2(\eta^{(\ell)}|j)\bigr)u_j^{(\ell)}.
\end{align}
Since \eqref{eq:app-H-bound} implies
\[
        |\eta^{(\ell)}|^2=T_\ell^2-2H_\ell=O(T_\ell),
\]
while \eqref{eq:app-z1-u-bound-rankone2} gives $|u^{(\ell)}|=O((\rho_\ell T_\ell)^{1/2})$, we have
\begin{equation}\label{eq:app-small-term1}
        \left|\sum_{j\in J}(T_\ell-\eta_j^{(\ell)})u_j^{(\ell)}\right|
        \le C|\eta^{(\ell)}|\,|u^{(\ell)}|
        \le C T_\ell^{1/2}\rho_\ell^{1/2}T_\ell^{1/2}=o(T_\ell),
\end{equation}
and similarly
\begin{equation}\label{eq:app-small-term2}
        \left|\sum_{j\in J}\sigma_2(\eta^{(\ell)}|j)u_j^{(\ell)}\right|
        \le C T_\ell^{1/2}\rho_\ell^{1/2}T_\ell^{1/2}=o(T_\ell).
\end{equation}
Thus all terms in \eqref{eq:app-dconstraint-rankone-expanded} except $a_\ell r_\ell$ and $T_\ell z_m^{(\ell)}$ are $o(T_\ell)$.  Hence
\[
        a_\ell r_\ell=-T_\ell z_m^{(\ell)}+o(T_\ell),
\]
and \eqref{eq:app-a-positive} gives
\begin{equation}\label{eq:app-r-ell-relation}
        r_\ell=-\frac{T_\ell z_m^{(\ell)}}{a_\ell}+o(T_\ell).
\end{equation}
A direct expansion of $q_\ell$ gives
\begin{align*}
        q_\ell={}&2\rho_\ell T_\ell(z_1^{(\ell)})^2
        +2\sum_{j\in J}(1+\rho_\ell(T_\ell-\eta_j^{(\ell)}))(u_j^{(\ell)})^2
        -2\rho_\ell T_\ell z_1^{(\ell)}z_m^{(\ell)}\\
        &-2\rho_\ell\sum_{j\in J}(a_\ell+T_\ell-\eta_j^{(\ell)})z_1^{(\ell)}u_j^{(\ell)}
        -2\sum_{j\in J}(1+\rho_\ell(T_\ell-\eta_j^{(\ell)}))z_m^{(\ell)}u_j^{(\ell)}\\
        &-2\sum_{\substack{j<k\\ j,k\in J}}
        (1+\rho_\ell(a_\ell+T_\ell-\eta_j^{(\ell)}-\eta_k^{(\ell)}))u_j^{(\ell)}u_k^{(\ell)}.
\end{align*}
Using \eqref{eq:app-z1-u-bound-rankone2}, \eqref{eq:app-small-term1}, and \eqref{eq:app-r-ell-relation}, we get
\begin{equation}\label{eq:app-q-rankone-asym}
        q_\ell=-2z_m^{(\ell)}r_\ell+o(T_\ell)
        =\frac{2T_\ell}{a_\ell}(z_m^{(\ell)})^2+o(T_\ell).
\end{equation}
If $z_m^{(\ell)}\not\to0$, then \eqref{eq:app-dm-asym-rankone} and \eqref{eq:app-q-rankone-asym} imply $q_\ell/d_m^{(\ell)}\to+\infty$, contradicting \eqref{eq:app-q-bounded-rankone}.  Therefore $z_m^{(\ell)}\to0$, and then $z_1^{(\ell)}\to1$ by $e^Tz=1$.

Thus, in both cases $T_\ell\to T_0>0$ and $T_\ell\to0$, we have proved
\[
        z_1^{(\ell)}\to1,
        \qquad
        z_m^{(\ell)}\to0,
        \qquad
        u^{(\ell)}\to0.
\]
Returning to \eqref{eq:app-coercive},
\[
        q_\ell\ge2\rho_\ell T_\ell(z_1^{(\ell)})^2.
\]
Since $d_m^{(\ell)}=\rho_\ell T_\ell(1+o(1))$ in both cases, we obtain
\[
        \liminf_{\ell\to\infty}\frac{q_\ell}{d_m^{(\ell)}}\ge2.
\]
Recalling that $\sigma_1(\lambda^{(\ell)})=1+\rho_\ell\sigma_1(\xi^{(\ell)})\to1$, we have
\begin{equation}\label{eq:app-thetam-rankone}
        \liminf_{\ell\to\infty}\Theta_m(\lambda^{(\ell)})\ge2,
        \qquad m\ge2.
\end{equation}
Combining \eqref{eq:app-theta1-rankone} and \eqref{eq:app-thetam-rankone},
\begin{equation}\label{eq:app-theta-rankone-final}
        \liminf_{\ell\to\infty}\Theta(\lambda^{(\ell)})\ge\frac32
\end{equation}
whenever the limiting vector is \eqref{eq:app-rank-one-limit}.

\subsection{The rank-two limit}

Assume now that
\[
        \lambda=(a,b,0,\ldots,0),
        \qquad
        a\ge b>0,
        \qquad
        a+b=1.
\]
Write
\[
        \lambda^{(\ell)}=(a_\ell,b_\ell,\xi^{(\ell)}),
        \qquad
        a_\ell\to a,
        \qquad
        b_\ell\to b,
        \qquad
        \xi^{(\ell)}\to0,
\]
where $\xi^{(\ell)}=(\lambda_3^{(\ell)},\ldots,\lambda_n^{(\ell)})$.  We normalize by $\sigma_1(\lambda^{(\ell)})=1$, so
\[
        a_\ell+b_\ell+T_\ell=1,
        \qquad
        T_\ell:=\sigma_1(\xi^{(\ell)}).
\]
Set
\[
        H_\ell:=\sigma_2(\xi^{(\ell)}),
        \qquad
        K_\ell:=\sigma_3(\xi^{(\ell)}).
\]
Since $\xi^{(\ell)}\in\Gamma_1$ and $\xi^{(\ell)}\to0$, we have $T_\ell>0$ and $T_\ell\to0$.  Also
\[
        d_1^{(\ell)}=b_\ell T_\ell+H_\ell,
        \qquad
        d_2^{(\ell)}=a_\ell T_\ell+H_\ell,
\]
and, for $j\ge3$,
\begin{equation}\label{eq:app-dj-ranktwo}
        d_j^{(\ell)}=a_\ell b_\ell+O(|\xi^{(\ell)}|)\to ab.
\end{equation}
Since
\begin{align}\label{eq:app-sigma3-ranktwo}
        0<\sigma_3(\lambda^{(\ell)})
        &=a_\ell b_\ell T_\ell+(a_\ell+b_\ell)H_\ell+K_\ell\notag\\
        &\le a_\ell b_\ell T_\ell+\frac{a_\ell+b_\ell}{2}(T_\ell^2-|\xi^{(\ell)}|^2)+C|\xi^{(\ell)}|^3,
\end{align}
combining $\xi^{(\ell)}\to0$ and $T_\ell\to0$, we get
\begin{equation}\label{eq:app-xi-square-bound}
        |\xi^{(\ell)}|^2\le C T_\ell.
\end{equation}
Put
\[
        p^{(\ell)}:=\frac{\xi^{(\ell)}}{T_\ell^{1/2}},
        \qquad
        h_\ell:=\frac{|\xi^{(\ell)}|^2}{2T_\ell}.
\]
After passing to a subsequence,
\[
        p^{(\ell)}\to p,
        \qquad
        h_\ell\to h\ge0.
\]
Moreover,
\begin{equation}\label{eq:app-p-h-identities}
        \sum_{j\ge3}p_j=0,
        \qquad
        |p|^2=2h,
        \qquad
        \frac{H_\ell}{T_\ell}\to -h.
\end{equation}
Dividing \eqref{eq:app-sigma3-ranktwo} by $T_\ell$ and using \eqref{eq:app-xi-square-bound} gives
\begin{equation}\label{eq:app-h-bound-ranktwo}
        0\le h\le ab.
\end{equation}
Consequently,
\begin{equation}\label{eq:app-d1-d2-limits-ranktwo}
        \frac{d_1^{(\ell)}}{T_\ell}\to b-h,
        \qquad
        \frac{d_2^{(\ell)}}{T_\ell}\to a-h.
\end{equation}
Because $h\le ab$, both limits are strictly positive:
\[
        b-h\ge b^2>0,
        \qquad
        a-h\ge a^2>0.
\]
Thus
\begin{equation}\label{eq:app-d12-comparable-T}
        cT_\ell\le d_1^{(\ell)}\le CT_\ell,
        \qquad
        cT_\ell\le d_2^{(\ell)}\le CT_\ell.
\end{equation}

\subsubsection{The positive indices \texorpdfstring{$m=1$}{} and \texorpdfstring{$m=2$}{}}

First consider $m=1$.  Write
\[
        z^{(\ell)}=(x_\ell,y_\ell,u^{(\ell)}),
        \qquad
        x_\ell=z_1^{(\ell)},
        \qquad
        y_\ell=z_2^{(\ell)},
        \qquad
        u^{(\ell)}=(z_3^{(\ell)},\ldots,z_n^{(\ell)}).
\]
Assume
\begin{equation}\label{eq:app-ranktwo-bounded-quotient}
        \frac{(z^{(\ell)})^TA^{(1)}(\lambda^{(\ell)})z^{(\ell)}}{d_1^{(\ell)}}\le C.
\end{equation}
By \eqref{eq:app-coercive} and \eqref{eq:app-d12-comparable-T},
\[
        (z^{(\ell)})^TA^{(1)}z^{(\ell)}
        \ge2T_\ell y_\ell^2
        +2\sum_{j\ge3}(b_\ell+T_\ell-\xi_j^{(\ell)})(u_j^{(\ell)})^2.
\]
The coefficients $b_\ell+T_\ell-\xi_j^{(\ell)}$ are bounded below by $b/2$ for large $\ell$.  Hence
\[
        y_\ell=O(1),
        \qquad
        u^{(\ell)}=O(T_\ell^{1/2}).
\]
The constraint $e^Tz=1$ gives $x_\ell=O(1)$.  Set
\[
        v^{(\ell)}:=\frac{u^{(\ell)}}{T_\ell^{1/2}}.
\]
Passing to a subsequence,
\[
        x_\ell\to x,
        \qquad
        y_\ell\to y,
        \qquad
        v^{(\ell)}\to v.
\]
From $e^Tz=1$,
\begin{equation}\label{eq:app-x-y-sum}
        x+y=1.
\end{equation}
For $j\ge3$,
\[
        d_j^{(\ell)}=a_\ell b_\ell+(a_\ell+b_\ell)(T_\ell-\xi_j^{(\ell)})+\sigma_2(\xi^{(\ell)}|j)
        =a_\ell b_\ell-T_\ell^{1/2}p_j^{(\ell)}+O(T_\ell).
\]
Therefore
\[
        \sum_{j\ge3}d_j^{(\ell)}u_j^{(\ell)}
        =a_\ell b_\ell T_\ell^{1/2}\sum_{j\ge3}v_j^{(\ell)}
        -T_\ell p^{(\ell)}\cdot v^{(\ell)}+O(T_\ell^{3/2}).
\]
Using \eqref{eq:app-d1-d2-limits-ranktwo}, the constraint $d^Tz=0$ becomes
\[
        0=T_\ell(b-h+o(1))x_\ell+T_\ell(a-h+o(1))y_\ell
        +a_\ell b_\ell T_\ell^{1/2}\sum_{j\ge3}v_j^{(\ell)}
        -T_\ell p^{(\ell)}\cdot v^{(\ell)}+o(T_\ell).
\]
Thus $\sum_{j\ge3}v_j^{(\ell)}=O(T_\ell^{1/2})$.  Define
\begin{equation}\label{eq:app-mu-def}
        \mu_\ell:=\frac{1}{T_\ell^{1/2}}\sum_{j\ge3}v_j^{(\ell)}
        =\frac{1}{T_\ell}\sum_{j\ge3}u_j^{(\ell)}.
\end{equation}
After passing to a further subsequence, $\mu_\ell\to\mu$, and dividing the constraint by $T_\ell$ gives
\begin{equation}\label{eq:app-constraint-limit-ranktwo}
        (b-h)x+(a-h)y+ab\mu-p\cdot v=0.
\end{equation}
Also
\begin{equation}\label{eq:app-v-sum-zero}
        \sum_{j\ge3}v_j=0.
\end{equation}
A direct expansion gives, using \eqref{eq:app-mu-def},
\begin{align*}
        (z^{(\ell)})^TA^{(1)}z^{(\ell)}={}&
        2T_\ell y_\ell^2
        +2\sum_{j\ge3}(b_\ell+T_\ell-\xi_j^{(\ell)})(u_j^{(\ell)})^2
        -2T_\ell x_\ell y_\ell\\
        &-2x_\ell\sum_{j\ge3}(b_\ell+T_\ell-\xi_j^{(\ell)})u_j^{(\ell)}
        -2y_\ell\sum_{j\ge3}(a_\ell+T_\ell-\xi_j^{(\ell)})u_j^{(\ell)}\\
        &-2\sum_{3\le i<j\le n}(1-\xi_i^{(\ell)}-\xi_j^{(\ell)})u_i^{(\ell)}u_j^{(\ell)}.
\end{align*}
Dividing by $T_\ell$ and using \eqref{eq:app-x-y-sum} and \eqref{eq:app-v-sum-zero}, we obtain
\begin{equation}\label{eq:app-Q-limit-ranktwo}
        \frac{(z^{(\ell)})^TA^{(1)}z^{(\ell)}}{T_\ell}\to Q,
\end{equation}
where
\begin{equation}\label{eq:app-Q-def-ranktwo}
        Q:=2y^2-2xy+(2b+1)|v|^2-2(bx+ay)\mu+2p\cdot v.
\end{equation}
Recalling \eqref{eq:app-d1-d2-limits-ranktwo}, it remains to prove
\begin{equation}\label{eq:app-Q-target}
        Q\ge\frac32(b-h).
\end{equation}
Let
\[
        L:=bx+ay.
\]
Since $x+y=1$, the constraint \eqref{eq:app-constraint-limit-ranktwo} is
\[
        ab\mu=p\cdot v-L+h.
\]
Substituting $\mu$ into \eqref{eq:app-Q-def-ranktwo},
\[
        Q=2y^2-2xy+\frac{2L(L-h)}{ab}+(2b+1)|v|^2
        +2\left(1-\frac{L}{ab}\right)p\cdot v.
\]
Young's inequality and \eqref{eq:app-p-h-identities} give
\[
        (2b+1)|v|^2+2\left(1-\frac{L}{ab}\right)p\cdot v
        \ge
        -\frac{|p|^2}{2b+1}\left(1-\frac{L}{ab}\right)^2
        =-\frac{2h}{2b+1}\left(1-\frac{L}{ab}\right)^2.
\]
Thus
\begin{align*}
        Q-\frac32(b-h)
        \ge F(h):={}&2y^2-2xy+\frac{2L(L-h)}{ab}
        -\frac{2h}{2b+1}\left(1-\frac{L}{ab}\right)^2
        -\frac32(b-h).
\end{align*}
By \eqref{eq:app-h-bound-ranktwo}, it suffices to prove $F(h)\ge0$ for all $x+y=1$ and $0\le h\le ab$.  Since $F$ is affine in $h$, it is enough to prove $F(0)\ge0$ and $F(ab)\ge0$.  Direct computation gives
\begin{align*}
        F(0)
        ={}&\frac{2(1-2b+2b^2)}{b(1-b)}
        \left(y-\frac{b(3b-1)}{2(1-2b+2b^2)}\right)^2
        +\frac{b^2(7-6b)}{2(1-2b+2b^2)},
\end{align*}
\begin{align*}
        F(ab)
        ={}&\frac{4(2b^2-3b+2)}{(1-b)(2b+1)}
        \left(y-\frac{b(2b^2-b+1)}{2(2b^2-3b+2)}\right)^2
        +\frac{b^3(3-2b)}{2(2b^2-3b+2)}.
\end{align*}
Since $0<b<1$, both expressions are nonnegative.  Hence \eqref{eq:app-Q-target} holds, and therefore
\begin{equation}\label{eq:app-theta1-ranktwo}
        \liminf_{\ell\to\infty}\Theta_1(\lambda^{(\ell)})\ge\frac32.
\end{equation}
The proof for $m=2$ is the same after interchanging the first two coordinates, so
\begin{equation}\label{eq:app-theta2-ranktwo}
        \liminf_{\ell\to\infty}\Theta_2(\lambda^{(\ell)})\ge\frac32.
\end{equation}

\subsubsection{Small indices \texorpdfstring{$m\ge3$}{}}

Fix $m\ge3$.  Let $z^{(\ell)}$ be feasible and suppose
\[
        \frac{(z^{(\ell)})^TA^{(m)}(\lambda^{(\ell)})z^{(\ell)}}{d_m^{(\ell)}}\le C.
\]
By \eqref{eq:app-coercive}, and because
\[
        \sigma_1(\lambda^{(\ell)}|1m)\to b,
        \qquad
        \sigma_1(\lambda^{(\ell)}|2m)\to a,
        \qquad
        \sigma_1(\lambda^{(\ell)}|im)\to1
        \quad(i\ge3,\ i\ne m),
\]
all components except possibly $z_m^{(\ell)}$ are bounded.  The constraint $d^Tz=0$, together with \eqref{eq:app-dj-ranktwo}, then bounds $z_m^{(\ell)}$ as well.  Passing to a subsequence,
\[
        z^{(\ell)}\to z.
\]
Since $d_1^{(\ell)},d_2^{(\ell)}\to0$ and $d_j^{(\ell)}\to ab>0$ for $j\ge3$, the constraint $d^Tz=0$ gives
\[
        \sum_{j\ge3}z_j=0.
\]
Together with $e^Tz=1$, this gives
\begin{equation}\label{eq:app-z1-z2-sum-ranktwo}
        z_1+z_2=1.
\end{equation}
At the limiting vector $(a,b,0,\ldots,0)$, the quadratic form satisfies
\[
        \lim_{\ell\to\infty}(z^{(\ell)})^TA^{(m)}(\lambda^{(\ell)})z^{(\ell)}
        =2bz_1^2+2az_2^2+3\sum_{\substack{j\ge3\\ j\ne m}}z_j^2+z_m^2.
\]
The small-index terms are nonnegative.  Using \eqref{eq:app-z1-z2-sum-ranktwo},
\[
        2bz_1^2+2az_2^2\ge2ab.
\]
Since $d_m^{(\ell)}\to ab>0$ and $\sigma_1(\lambda^{(\ell)})=1$,
\begin{equation}\label{eq:app-thetam-ranktwo}
        \liminf_{\ell\to\infty}\Theta_m(\lambda^{(\ell)})\ge2,
        \qquad m\ge3.
\end{equation}
Combining \eqref{eq:app-theta1-ranktwo}, \eqref{eq:app-theta2-ranktwo}, and \eqref{eq:app-thetam-ranktwo},
\begin{equation}\label{eq:app-theta-ranktwo-final}
        \liminf_{\ell\to\infty}\Theta(\lambda^{(\ell)})\ge\frac32
\end{equation}
whenever the limiting vector is \eqref{eq:app-rank-two-limit}.

\subsection{Completion of the proof}

By \eqref{eq:app-theta-rankone-final} and \eqref{eq:app-theta-ranktwo-final}, Proposition \ref{prop:app-compactness} is proved.  We now prove Theorem \ref{thm:theta-k3}.  Fix $\delta$ with $1<\delta<3/2$.  If no such $\eps$ existed, then for every $\ell$ there would be $\lambda^{(\ell)}\in\Gamma_3$ such that
\[
        d_1^{(\ell)}\le \frac1\ell\sigma_2(\lambda^{(\ell)}),
        \qquad
        \Theta(\lambda^{(\ell)})<\delta.
\]
By homogeneity, normalize $\sigma_1(\lambda^{(\ell)})=1$.  Proposition \ref{prop:app-compactness} gives
\[
        \liminf_{\ell\to\infty}\Theta(\lambda^{(\ell)})\ge\frac32>\delta,
\]
a contradiction.  Hence an $\eps(n,\delta)>0$ exists.  This proves Theorem \ref{thm:theta-k3}.  In particular, one may take any fixed $1<\delta<3/2$, for instance $\delta=4/3$, after choosing $\eps>0$ sufficiently small.

\bibliographystyle{acm}
\bibliography{Reference}
\end{document}